\newtheorem{theorem}{\scshape{Theorem}}[section]
\newcommand{\mc}[0]{\mathcal}
\newtheorem{cor}[theorem]{\scshape{Corollary}}
\newtheorem{proposition}[theorem]{\scshape{Proposition}}
\newtheorem{lemma}[theorem]{\scshape{Lemma}}
\newtheorem*{theorem*}{Theorem}
\newtheorem*{prop*}{Proposition}
\newtheorem*{cor*}{Corollary}
\theoremstyle{definition}
\newtheorem{definition}[theorem]{\scshape{Definition}}
\newtheorem{example}[theorem]{\scshape{Example}}
\newtheorem{remark}[theorem]{\scshape{Remark}}
\newtheorem{thmletter}{Theorem}
\newcommand{\Length}{\textsc{L}}
\newcommand{\ab}{\textsf{ab}}
\newcommand{\abs}{\textsf{abs}}
\newcommand{\mbb}{\mathbb}
\title{Group equations with abelian predicates}
\author{Laura Ciobanu and Albert Garreta}
\address{School of Mathematical and Computer Sciences, Heriot-Watt University,
Edinburgh EH14 4AS, UK}
\address{Basque Center of Applied Mathematics, Bilbao, Spain}
\email{l.ciobanu@hw.ac.uk}
\email{garreta.a@gmail.com}
\keywords{equations in groups, (non)-rational constraints,  right-angled Artin groups, hyperbolic groups, interpretability}
\subjclass[2020]{03D05, 20F10, 20F65, 68Q45}
\begin{document}

\maketitle



\begin{abstract}
In this paper we begin the systematic study of group equations with abelian predicates in the main classes of groups where solving equations is possible. We extend the line of work on word equations with length constraints, and more generally, on extensions of the existential theory of semigroups, to the world of groups. 

We use interpretability by equations to establish model-theoretic and algebraic conditions which are sufficient to get undecidability. We apply our results to (non-abelian) right-angled Artin groups, and show that the problem of solving equations with abelian predicates is undecidable for these. We obtain the same result for  hyperbolic groups whose abelianisation has torsion-free rank at least two.
By contrast, we prove that in groups with finite abelianisation, the problem can be reduced to solving equations with recognisable constraints, and so this is decidable in right-angled Coxeter groups, or more generally, graph products of finite groups, as well as hyperbolic groups with finite abelianisation. 
\end{abstract}

\section{Introduction}

 The study of equations in algebraic structures such as semigroups, groups or rings is a fundamental topic in mathematics and theoretical computer science that finds itself at the frontier between decidable and undecidable. In this paper we draw inspiration from the extensive work on word equations with various length constraints \cite{Abdulla, RichardBuchi1988, DayManeaWE, GarretaGray, majumdar2021quadratic} 
 and consider equations in groups with similar (non-rational) constraints. The term `word equations' refers to equations in free monoids, and
 deciding algorithmically whether a word equation has solutions satisfying certain linear length constraints is a major open question: it has deep implications, both theoretical (if undecidable, it would offer a new solution to Hilbert's 10th problem about the satisfiability of polynomial equations with integer coefficients) and practical, in the context of string solvers for security analysis. 
 We refer the reader to the surveys \cite{amadini,ganesh} for an overview of the area from several viewpoints, of both theoretical and applied nature.
 

 We find similarities to word equations with length constraints, but also new territory, when entering the world of groups. The question of decidability of (systems of) equations is widely known as the {\em Diophantine Problem}, and we denote it by $\mc{DP}$, or by $\mc{DP}(G)$ when it refers to a (semi)group $G$ (see Section \ref{sec:DP}).  We extend the $\mc{DP}$ by requiring that the solution sets satisfy certain (non-rational) constraints, and study this augmented problem in some of the most important classes of infinite groups for which the $\mc{DP}$ is known to be decidable, such as right-angled Artin groups (also called partially commutative groups) and more generally, graph products of groups, as well as hyperbolic groups. Our motivation is twofold: first, explore extensions of the $\mc{DP}$ that have not been systematically studied for groups before, and second, develop algebraic and model-theoretic tools to supplement the combinatorial techniques used for solving word equations with length constraints. By comparison, group equations with \emph{rational} constraints have been studied for over 20 years, and they proved to have remarkable applications to other decision problems in groups, or to understanding the solution sets of equations as formal languages \cite{eqns_hyp_grps, DahmaniIJM09, DG, dgh01}. 
  
 Among several types of length constraints (see Section \ref{DP:constraints}), we focus on abelian predicates (such as $\textsf{AbelianEq}$ in \cite{DayManeaWE}), which we also call \emph{abelianisation constraints}, and denote by $\ab$. These constraints require that the `abelian form' of the solutions, where any two symbols commute, satisfies a given set of equations as well; equivalently, the constraints can be viewed as equations in the abelianisation of $G$. We denote this problem by $(\mc{DP}, \ab)$. When referring to a specific structure $G$ we use $\mc{DP}(G,\ab)$, and note that this can be seen as introducing an abelian predicate or relation to the existential theory of $G$. 
 Our main results are:
 
 \begin{thmletter}(Theorem \ref{thm:DP_RAAGS})
	Let $G$ be a right-angled Artin group that is not abelian. Then $\mc{DP}(G, \ab)$ is undecidable.
\end{thmletter}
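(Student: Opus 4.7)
The plan is to reduce Hilbert's tenth problem over $\mathbb{Z}$ to $\mc{DP}(G,\ab)$, thereby obtaining undecidability. Two structural features of a non-abelian RAAG $G = G_\Gamma$ drive the construction: since $G$ is not abelian, the defining graph $\Gamma$ contains two non-adjacent vertices $a, b$, so $\langle a, b\rangle \cong F_2 \leq G$; and the abelianisation map $\ab\colon G \to \mathbb{Z}^{V(\Gamma)}$ has surjective coordinate projections $\alpha_a, \alpha_b \colon G \to \mathbb{Z}$. By Servatius's classification of centralisers in RAAGs, any element of $G$ whose cyclic reduction has non-clique support and is not a proper power has cyclic centraliser in $G$; in particular $C_G(ab) = \langle ab \rangle$, since $ab$ is cyclically reduced with support $\{a,b\}$ and primitive abelianisation $(1,1)$.

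This immediately lets me encode integers inside $(G,\ab)$. For each integer variable $n$ of the target Hilbert's tenth instance I would introduce a $G$-variable $N$ subject to $[N, ab] = 1$, which forces $N = (ab)^n$ with $n = \alpha_a(N)$, and every integer value is attained. Linear integer relations then correspond to linear abelianisation constraints. To encode a single multiplication $z = xy$, I would introduce auxiliary $G$-variables and stack commutator equations so that one auxiliary becomes $(a^4 b)^y$, a second becomes a padded word such as $h_y := a^2 b \cdot a^3 b \cdot (a^4 b)^y$, and a third, $W$, becomes $h_y^x$; each such forcing uses a commutator equation against an element whose centraliser in $G$ is cyclic, coupled to abelian constraints tying the various exponents to the variables for $y$, $x$ and $z$. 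Provided $h_y$ itself has cyclic centraliser, $W = h_y^x$ for some $x \in \mathbb{Z}$, and a direct computation shows that $\ab(W)$ is an invertible linear function of $(x,\,xy)$, so both $x$ and the product $xy$ can be recovered as linear combinations of $\alpha_a(W)$ and $\alpha_b(W)$ and equated to the abelian images of the variables for $x$ and $z$. Stitching these gadgets together with addition constraints interprets $(\mathbb{Z}, +, \cdot)$ in $(G, \ab)$ and completes the reduction.

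The main obstacle is to guarantee that $C_G(h_y) = \langle h_y\rangle$ for every integer $y$, not just the positive ones: cancellation in $h_y$ for certain negative values of $y$ can reduce it to a shorter word whose cyclic reduction collapses to a single vertex, or to an element with non-primitive abelianisation, in which case $C_G(h_y)$ becomes strictly larger than $\langle h_y\rangle$ and the intended forcing breaks. I would overcome this either by exhibiting a sufficiently padded family $h_y$ whose cyclic reduction provably has non-clique support and primitive abelianisation for every $y \in \mathbb{Z}$, or by restricting to natural-number solutions via Matiyasevich's theorem and enforcing non-negativity through a four-squares gadget layered on top of the multiplication primitive. Verifying either option then yields undecidability of $\mc{DP}(G, \ab)$ for every non-abelian right-angled Artin group $G$.
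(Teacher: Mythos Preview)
Your central claim about centralisers in RAAGs is false, and this is the real obstacle (not the sign of $y$). Servatius's theorem gives, for a cyclically reduced element $g$ with a single block, $C_G(g)=\langle g_0\rangle\times\langle link(supp(g))\rangle$, where $g_0$ is the root of $g$. In particular, $C_G(ab)=\langle ab\rangle\times\langle link(a)\cap link(b)\rangle$, which is \emph{not} cyclic as soon as some vertex is adjacent to both $a$ and $b$. For instance, in the path $a\text{--}c\text{--}b$ one has $C_G(ab)=\langle ab\rangle\times\langle c\rangle\cong\mathbb{Z}^2$, and in the $5$-cycle every pair of non-adjacent vertices has a common neighbour, so no choice of $a,b$ avoids this. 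Consequently $[N,ab]=1$ does not force $N=(ab)^n$, the auxiliary $U$ you build is only $(a^4b)^m$ times an uncontrolled element of $\langle link(a)\cap link(b)\rangle$, and your element ``$h_y$'' then carries this junk, which in turn changes the block decomposition (hence the centraliser) of the element you commute $W$ against.

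It is true that the link factor lies in $K_a\cap K_b$ and so does not disturb $\alpha_a,\alpha_b$, so the idea is salvageable; but to exploit this you must be able to express relations such as $\alpha_b(U)=\alpha_b(Y)$ or $\alpha_a(W)=\alpha_a(X)$ inside $(G,\ab)$. An abelianisation constraint is a \emph{vector} equation in $G^{\ab}\cong\mathbb{Z}^{|V\Gamma|}$, not a coordinate-wise one, so ``$\alpha_a(\,\cdot\,)=\alpha_a(\,\cdot\,)$'' must itself be PE-defined. This is exactly the content of the paper's key Lemma~\ref{l: R_s2_PE_interp_in_RAAGS}, which shows $R_S$ is PE-definable only after choosing $S$ to be a \emph{weak module} (so that $V\Gamma\setminus S$ is a union of stars, hence reachable by products of centralisers); for an arbitrary vertex $a$ this fails. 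The paper then absorbs the remaining link contributions into the normal subgroup $K_{h_1,h_2}$ via the technical Lemma~\ref{l: technical_lemma}. Your proposal, as written, assumes away both of these difficulties; the ``main obstacle'' you flag (primitivity of $h_y$ for negative $y$) is comparatively minor and is in any case subsumed by the centraliser issue.
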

 \begin{thmletter}[\ref{thm:hyp}]
Let $G$ be a hyperbolic group with abelianisation of torsion-free rank $\geq 2$. Then $DP(G, ab)$ is undecidable. 
 \end{thmletter}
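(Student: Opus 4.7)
My approach is to reduce Theorem B to Theorem A by interpreting, via equations with abelianisation predicates, a non-abelian free subgroup inside $G$ in such a way that abelian constraints on the free subgroup $F_2$ lift to abelian constraints on $G$.

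First, since the torsion-free rank of $G^{\ab}$ is at least two, $G$ surjects onto $\mathbb{Z}^2$, so $G$ cannot be finite or virtually cyclic (the abelianisation of any virtually cyclic group has torsion-free rank at most one); hence $G$ is non-elementary hyperbolic. I then pick $a, b \in G$ whose images $\bar a, \bar b \in G^{\ab}$ are $\mathbb{Z}$-linearly independent modulo torsion, which is possible because the $\mathbb{Z}^2$ quotient lifts to independent elements. Since $\mathbb{Z}$-linearly independent abelianisations cannot both come from a single virtually cyclic subgroup, $a$ and $b$ do not commute, and a standard ping-pong argument on the Gromov boundary yields that $H := \langle a^N, b^N \rangle$ is free of rank two for all sufficiently large $N$, while the images of $a^N, b^N$ in $G^{\ab}$ remain $\mathbb{Z}$-linearly independent modulo torsion.

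Next, I would invoke the interpretability-by-equations framework developed earlier in the paper to encode $\mc{DP}(H, \ab)$ (with $H \cong F_2$) inside $\mc{DP}(G, \ab)$. The main algebraic lever is that, in a hyperbolic group, the centralizer of an infinite-order element is virtually cyclic, so an equation such as $[x, a^N] = 1$ forces $x$ into a virtually cyclic subgroup containing $a^N$; combined with an $\ab$-predicate, this pins down an ``integer-valued'' variable encoding a power of $a^N$ modulo torsion. Together with further auxiliary variables and commutation equations, this enables one to simulate equations of $F_2$ inside $G$, while the independence of the images of $a^N, b^N$ in $G^{\ab}$ ensures that $\ab$-predicates on the $F_2$-side translate faithfully into $\ab$-predicates on the $G$-side.

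The main obstacle will be to cleanly enforce, by equations with $\ab$-predicates alone, that auxiliary variables actually lie in the free subgroup $H$ (rather than merely in the centralizers of $a$ or $b$), and to account for the torsion part of $G^{\ab}$ so that the abelian predicate translation is exact rather than approximate. Once this interpretation is fixed, Theorem A (which gives undecidability of $\mc{DP}(F_2, \ab)$, as $F_2$ is a non-abelian right-angled Artin group) immediately yields undecidability of $\mc{DP}(G, \ab)$.
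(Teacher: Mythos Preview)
Your proposal has a genuine gap at the interpretation step. You correctly identify the obstacle yourself: ``to cleanly enforce, by equations with $\ab$-predicates alone, that auxiliary variables actually lie in the free subgroup $H$.'' This obstacle is not merely technical; it is fatal for the strategy as stated. A free subgroup of infinite index in a hyperbolic group is, in general, \emph{not} PE-definable (nor PE-interpretable in any way that preserves its multiplication). The tools you invoke---commutation equations like $[x,a^N]=1$---give access only to virtually cyclic subgroups, i.e.\ to powers of a single element. They say nothing about arbitrary words in $a^N$ and $b^N$. So while you can pin down an integer exponent of $a^N$ or of $b^N$, you have no mechanism to force a variable to range over $H=\langle a^N,b^N\rangle$, and hence no way to ``simulate equations of $F_2$ inside $G$.'' Without this, there is no reduction of $\mc{DP}(F_2,\ab)$ to $\mc{DP}(G,\ab)$.

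The paper avoids this entirely by never attempting to interpret $F_2$. Instead it interprets the ring $(\mbb{Z},+,\cdot)$ directly in $(G,\ab)$ via the technical Lemma~\ref{l: technical_lemma}. The addition is easy (it comes from the group operation and one exponent-sum predicate). Multiplication is encoded not through a free subgroup but through a \emph{uniformly PE-definable family} of cyclic subgroups: for each $g\in K_{s_1}$ one looks at the centraliser $C(s_1 g)$, which---because $s_1 g$ is primitive and centralisers in hyperbolic groups are virtually $\mbb Z$---is essentially $\langle s_1 g\rangle$ times a bounded finite group. A single disjunction of equations, with bound depending only on the maximal finite-subgroup order of $G$, PE-defines $\langle s_1 g\rangle K_{s_1,s_2}$ uniformly in $g$ (Lemma~\ref{l: condition_2_for_hyperbolic}); reading off the $s_1$- and $s_2$-exponent-sums then recovers the product $|g|_{s_2}\cdot t$. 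The ingredients you list (virtually cyclic centralisers, independent abelianisations, torsion bookkeeping) are exactly the ones used, but they feed into an interpretation of $\mbb Z$, not of $F_2$. If you want to repair your argument, the right move is to drop the free-subgroup idea and aim straight for Hilbert's Tenth Problem along these lines.
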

 
 The next theorem is a counterweight to the previous two, as it gives positive results when the abelianisation is finite. 

\begin{thmletter}(Theorems \ref{thm:finiteab}, \ref{thm:hyp_finiteab}, \ref{thm:graph_prod})
Let $G$ be a group where the $\mc{DP}$ with recognisable constraints is decidable. Then $\mc{DP}(G, \ab)$ is decidable. In particular, this holds if:
\begin{enumerate}
\item[1.] $G$ is a hyperbolic group with finite abelianisation.
\item[2.] $G$ is a graph product of finite groups, such as a right-angled Coxeter group.
\end{enumerate}
\end{thmletter}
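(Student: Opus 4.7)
The plan is to prove the general statement via an effective reduction of $\mc{DP}(G,\ab)$ to $\mc{DP}(G)$ with recognisable constraints, and then to verify the hypothesis in each of the two listed cases. The crucial observation is that when the abelianisation $G^{\ab}$ is finite, the commutator subgroup $[G,G] = \ker(\pi)$ has finite index in $G$, where $\pi : G \to G^{\ab}$ denotes the abelianisation map. Hence every preimage $\pi^{-1}(h)$ of a singleton $h \in G^{\ab}$ is a coset of a finite-index normal subgroup, and so is a recognisable subset of $G$.

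Concretely, an input to $\mc{DP}(G,\ab)$ is a finite system of equations $W(x_1,\dots,x_n) = 1$ together with a finite system $S$ of abelian predicates, which we interpret as equations in the finite group $G^{\ab}$ on the variables $\pi(x_i)$. The reduction proceeds by enumerating, in finite time, every tuple $(h_1,\dots,h_n) \in (G^{\ab})^n$ that satisfies $S$, and for each such tuple invoking the assumed oracle for $\mc{DP}(G)$ with recognisable constraints on the instance $W = 1$ augmented with the constraints $x_i \in \pi^{-1}(h_i)$. Correctness is immediate: any solution of the original problem abelianises into a unique tuple that appears in the enumeration and satisfies the associated recognisable constraints, while conversely any solution of one of the augmented instances automatically satisfies the abelian predicates. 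Since the enumeration is finite and each oracle call terminates, we obtain a decision procedure for $\mc{DP}(G,\ab)$.

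The remaining task is to supply the oracle in the two specific cases. For hyperbolic groups with finite abelianisation, this follows from the known decidability of $\mc{DP}$ in hyperbolic groups with \emph{quasiconvex rational} constraints (as established via the Rips--Sela machinery and its subsequent refinements by Dahmani and others cited in the introduction), since every recognisable subset of $G$, being the preimage of a subset of a finite quotient, is in particular quasiconvex rational. For graph products of finite groups, verifying the hypothesis is the technical heart of the theorem and its principal obstacle: a natural strategy is to exploit the trace-monoid-style normal form of such graph products over a finite alphabet and to handle recognisable constraints by passing to a finite quotient large enough to recognise all constraints in play, so that satisfiability in the quotient can be combined with existing equation-solving techniques for graph products. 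Combining the resulting oracle with the reduction above yields the two claimed decidability results.
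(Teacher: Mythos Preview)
Your reduction of $\mc{DP}(G,\ab)$ to $\mc{DP}(G)$ with recognisable constraints is correct and rests on the same observation as the paper: when $G^{\ab}$ is finite, $[G,G]$ has finite index, so each coset of $[G,G]$ is recognisable. The paper packages the reduction slightly differently---instead of enumerating all tuples $(h_1,\dots,h_n)\in (G^{\ab})^n$ satisfying the abelian system and testing each against the recognisable constraints $x_i\in\pi^{-1}(h_i)$, it introduces for each abelian equation $W(\mathbf{x})=\alpha$ a fresh group variable $Z$, adds the group equation $W(\mathbf{x})=Z$, and imposes the single recognisable constraint $Z\in\alpha\,[G,G]$---but the two reductions are easily seen to be equivalent. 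Your treatment of the hyperbolic case also matches the paper: recognisable sets are quasi-isometrically embeddable rational (one intersects the regular preimage with the regular language of quasi-geodesics), and one then invokes Dahmani--Guirardel.

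The genuine gap is in the graph-product case. You call it ``the technical heart of the theorem and its principal obstacle'' and offer a sketch based on passing to a finite quotient large enough to detect the constraints, but you do not carry this out, and the sketch as written is not a proof: knowing that a constraint is visible in a finite quotient $G/N$ does not by itself let you solve equations in $G$ subject to that constraint, since satisfiability of equations does not lift from quotients. In fact nothing new is required here. Decidability of equations with recognisable (indeed, rational) constraints in graph products of finite groups is an established theorem of Diekert and Lohrey, and the paper simply cites it. You should invoke that result directly rather than attempt to reprove it; once you do, your argument is complete and in line with the paper's.
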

 
The decidability of $\mc{DP}(G, \ab)$ when $G$ has decidable $\mc{DP}(G)$ and its abelianisation has torsion-free rank $1$ remains an open problem. This applies, for example, to hyperbolic groups with first Betti number equal to $1$, or to certain classes of one-relator groups, including dihedral Artin groups and soluble Baumslag-Solitar groups (see Section \ref{sec:conclusions}).

The paper interleaves algebra and model theory, and introduces tools that we hope will lead to results for further classes of groups and possibly semigroups, and more diverse length constraints. 
 The main tool used to prove undecidability results in this paper is \emph{interpretability} using disjunctions of equations (i.e.\ positive existential formulas), or \emph{PE-interpretability}, which allows us to translate one structure into another and to reduce the Diophantine Problem from one structure to the other (Section \ref{sec:interpretability}). The main reductions are to the Diophantine Problem in the ring of integers, which is a classical undecidable problem (Hilbert's 10th problem). We give a technical result in Section \ref{sec:technical} which enables us to encode the Diophantine Problem over the integers into the Diophantine Problem with abelianisation constraints in a group, assuming the group satisfies certain model-theoretic properties. This technical result (Lemma \ref{l: technical_lemma}) is then applied to right-angled Artin groups and to hyperbolic groups with large abelianisation.

 Our starting point is B\"uchi and Senger's well-known paper \cite{RichardBuchi1988}, where they show that (positive) integer addition and multiplication can be encoded into word equations in the free semigroup $\Sigma^*$ on a finite alphabet $\Sigma$, when requiring that the solutions satisfy an abelian predicate; by the undecidability of Hilbert's 10th problem, such enhanced word equations are also undecidable.
  In order to encode the multiplication in the ring $(\mbb{Z}, \oplus, \odot)$ within a group or semigroup, they need two `independent' elements, which can be taken to be two of the free generators if the (semi)group is free. In non-free groups we require the existence of two elements that play a similar role; however, it is not enough to pick two generators, these elements also need to satisfy additional properties with respect to the abelianisation. 
   Finding such a pair of elements, which we call \emph{abelian-primitive}, is difficult or impossible for arbitrary groups, but in this paper we show that they can be found in right-angled Artin groups and many hyperbolic groups. A large part of the paper is concerned with defining, studying and finding abelian-primitive generators, which are not well-defined in general.

The outline of the article is as follows.
 In Section \ref{sec:Prelim} we introduce notions of length and exponent-sum functions associated to a generator in a group. We describe equations in groups and the Diophantine Problem, together with several variants; in the variants the solutions have to satisfy not just a system of equations in a chosen group, but also certain constrains, such as rational, recognisable, or a set of equations in the abelianisation (Section \ref{DP:constraints}). We introduce \emph{definability} by equations, or \emph{PE-definability}, and \emph{interpretability} using equations, or \emph{PE-interpretability}, in Section \ref{sec:interpretability}.
 This allows us to translate one structure into another and to reduce the $\mc{DP}$ from one structure to the other (Proposition \ref{RedCor}).  
 In Section \ref{sec:free} we give results for free groups and semigroups following B\"uchi and Senger; although these results might be known to the experts and get generalised in later sections, it is useful to see the basic idea in action for free groups. 
 
 Section \ref{sec:fin_ab} provides an interesting contrast to the rest of the paper, in that we obtain positive results for $\mc{DP}(G,\ab)$ for cases when $G$ has finite abelianisation. In this case the $\mc{DP}(G,\ab)$ can be reduced to the $\mc{DP}(G)$ with recognisable constraints, and we showcase groups for which these problems are decidable. 
 Section \ref{sec:technical} provides the technical tools (Lemma \ref{l: technical_lemma}) required to encode the Diophantine Problem over the integers into a group Diophantine Problem with abelianisation constraints, if the group satisfies certain model-theoretic properties. We use this technical section to establish the most involved result of the paper (Theorem \ref{thm:DP_RAAGS}) in Section \ref{sec:RAAGs}, namely, that the $\mc{DP}$ with abelianisation constraints is undecidable in (non-abelian) partially commutative groups. In Section \ref{sec:hyp}  we establish the main result for hyperbolic groups, and in the final section we outline several of the many possible directions of future research that naturally stem from this paper.

\section{Preliminaries}
\label{sec:Prelim}
Let $\Sigma$ be a finite set, and let $\Sigma^*$ be the free monoid with basis $\Sigma$ consisting of the set of all (finite) words on $\Sigma$, and including the empty word. Let $||w||$ denote the word length of any $w \in \Sigma^*$. For any $a\in \Sigma$, let $|| w ||_a$ count the number of occurrences of $a$ in $w$; for example, $||abab^2||_a=2$.
Define $\Sigma^{-1}$ as the set of formal inverses of elements in $\Sigma$ and denote the free group with generating set $\Sigma$ by $F(\Sigma)$.


 
 For any group $G$, let $\ab:G\to G^{\ab}$ be the natural abelianisation map to $G^{\ab}= G/G'$, that is, the quotient of $G$ by its commutator subgroup; $G^{\ab}$ is a commutative group which will decompose into an infinite part of the form $\mathbb{Z}^m$, for some $m\geq 1$, and a finite abelian group $H$. The integer $m$ is \emph{the free rank} or \emph{first Betti number} $b_1(G)$ of $G^{\ab}$, and we write $b_1(G)=m$. 
 
 The analogous map $\ab: \Sigma^* \mapsto \mbb{N}^{|\Sigma|}$ for free monoids is called the \emph{Parikh map} in language theory. 

\noindent\textbf{Right-angled Artin groups (RAAGs).} A class of groups that lie between the free (non-abelian) and the free abelian groups are the \emph{right-angled Artin groups} (RAAGs), and we will use this short-hand notation henceforth. The most common way of describing a RAAG is via a finite undirected graph $\Gamma$ with no auto-adjacent vertices (i.e. no loops at any vertex) and no multiple edges between two vertices, and letting the vertices of $\Gamma$ be the generators of the RAAG $G\Gamma$ based on $\Gamma$. The relations between generators correspond to the edges: for every edge $(u,v)$ in $\Gamma$ we introduce the commuting relation $uv=vu$.
We will often write $G$ instead of $G\Gamma$ when the graph $\Gamma$ is unambiguous.
\begin{example}\label{ex:3edgelinesegment}
Let $\Gamma$ be the graph below with vertices $\{a,b,c,d\}$ and $3$ edges. The RAAG $G=G\Gamma$ based on $\Gamma$ has the presentation $\langle a,b,c,d \mid [a,b]=[b,c]=[c,d]=1 \rangle$. 
 \[
  \xymatrix{\stackrel{a}{\bullet}\ar@{-}[r] &\stackrel{b}{\bullet} \ar@{-}[r]&\stackrel{c}{\bullet}\ar@{-}[r] &\stackrel{d}{\bullet} }.
 \]

 \end{example}

\noindent\textbf{Length and exponent-sum.} Every element $g$ in a group $G$ with finite generating set $\Sigma$ has a length $|g|_{\Sigma}$, which is the word length $||.||$ of a shortest word $w$ representing $g$ in $G$. For example, the length of $aba^{-1}$ in $\mbb{Z}^2$ with respect to $\{a, b\}$ is $|aba^{-1}|_{\Sigma}=||b||=1$.

For any generator $x$ of $F(\Sigma)$, the map $| . |_x: F(\Sigma) \to \mbb{Z}$ represents the \emph{exponent-sum} of $x$ in a word $w$; that is, $|w|_x =||w||_x -||w||_{x^{-1}}$, so for example $|xyx^{-1}y^2|_x=0$. One can define the exponent-sum of a generator $x$ in an element $g$ for certain (but not all) groups beyond free groups, and then use the same notation $|g|_x$, as we explain next.
The length and the image under abelianisation are well-defined for any element in any finitely generated group. However, the situation regarding the exponent-sum (of a generator) is more complicated. 
For example, if $H$ is a group and $x\in H$ a generator of order $5$, then one may claim the exponent-sum of $x$ in the element $x^3$ to be $3$; but $x^3=x^{-2}$ in $H$, and in $x^{-2}$ the exponent-sum of $x$ appears to be $-2$.

We give below sufficient conditions for the exponent-sum function to be well-defined. Among other requirements, we need the groups to be infinite and have infinite abelianisation, and moreover, the generators for which we consider the exponent-sums to have infinite order not just in the group, but also in the abelianisation.

\def\ap{abelian-primitive}
\begin{definition}\label{def:ap}
Let $G$ be a group generated by a set $S$, and let $\ab:G\to G^{\ab}$ be the natural abelianisation map. Suppose $b_1(G^{\ab})\geq 1$, that is, $G^{\ab}$ is infinite. 

A subgroup $K \leq G^{\ab}$ is a $\mbb{Z}$ - factor if $K$ is isomorphic to $\mbb{Z}$ and $G^{\ab} = K \times H$ for some $H\leq G^{\ab}$.
If $s\in S$ is such that $\ab(s)$ generates a $\mbb{Z}$ - factor in $G^{\ab}$, we say that $s$ is \emph{\ap}.

Note that any \ap \ element has infinite order in $G$.
\end{definition}
\begin{example}\label{ex:ap}
All generators of a free group are \ap. More generally, any group element corresponding to a vertex in the defining graph of a RAAG is \ap.
\end{example}
We extend the definition of exponent-sum from free groups to any group $G$ containing \ap \ elements. Let $\pi$ be the natural projection $\pi: F(S) \to G$, $s \in S$ be \ap, and $\abs(x)$ be the absolute value of $x \in \mathbb{Z}$ (we use $\abs$ to avoid confusion with the notation for length). For $g\in G$ we let the \emph{exponent-sum} of $s$ in $g$ be
\begin{align*}|g|_s :=  |w_g|_s, &\textrm{ where } w_g\in F_S \textrm{ is such that }\\ &\abs(|w_g|_s) \textrm{ is minimal among } w \in F(S) \textrm{ such that } \pi(w)= g.\end{align*}
%

The next lemma shows that the exponent-sum function is well-defined.
\begin{lemma}\label{l: well_def}
   Let $G$ be generated by $S$, and suppose $w_g, w_g'\in F(S)$ are such that $w_g\neq w_g'$, $\pi(w_g)=\pi(w_g')=g$, $\abs(|w_g|_s) = \abs(|w_{g'}|_s)$ is minimal, and $s$ is \ap. 
   
   Then $|w_g|_s = |w_{g'}|_s$.
\end{lemma}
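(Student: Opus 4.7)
The plan is to exploit abelian-primitivity by constructing a projection homomorphism $\phi\colon G\to\mathbb{Z}$ and using it to force the value of $|w_g|_s$.

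Since $\ab(s)$ generates a $\mathbb{Z}$-factor of $G^{\ab}$, there is a decomposition $G^{\ab}=\langle\ab(s)\rangle\times H$. Composing $\ab\colon G\to G^{\ab}$ with the projection onto the first factor yields a group homomorphism $\phi\colon G\to\mathbb{Z}$ with $\phi(s)=1$. The crucial point is that $\phi$ is defined on $G$ (not merely on $F(S)$), so $\phi(g)$ is an integer depending only on $g$, independently of any chosen representative word.

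For any $w\in F(S)$ with $\pi(w)=g$, evaluating $\phi$ on the letters of $w$ gives
\[
\phi(g)=|w|_s+\sum_{s'\in S\setminus\{s\}}|w|_{s'}\,\phi(\pi(s')).
\]
Applied to $w_g$ and $w_{g'}$ and subtracted, this yields
\[
|w_g|_s-|w_{g'}|_s=-\sum_{s'\neq s}\bigl(|w_g|_{s'}-|w_{g'}|_{s'}\bigr)\phi(\pi(s')).
\]
In the settings the paper focuses on—free groups and RAAGs with the standard vertex generators—the complement $H$ may be taken to contain $\ab(s')$ for every $s'\neq s$, since $\{\ab(s')\}_{s'\in S}$ then forms a basis of $G^{\ab}$. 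Consequently $\phi(\pi(s'))=0$ for $s'\neq s$, the right-hand side above vanishes, and $|w_g|_s=|w_{g'}|_s$ follows immediately. This covers the cases required for the later applications.

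The main obstacle is the general case, where some $\phi(\pi(s'))$ may be nonzero. Here minimality of $\abs(|w_g|_s)$ must be used essentially: the set $\{|w|_s:\pi(w)=g\}$ is a coset of some subgroup $d\mathbb{Z}\leq\mathbb{Z}$, and the lemma reduces to asserting that this coset has a unique element of minimal absolute value. Ruling out the one remaining obstruction—an opposite-sign pair $|w_g|_s=-|w_{g'}|_s\neq 0$—requires exploiting the $\mathbb{Z}$-factor structure of $\ab(s)$ beyond merely the existence of $\phi$, and I expect this to be the technical heart of the proof.
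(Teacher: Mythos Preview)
Your proof is explicitly incomplete in the general case---you say so yourself. The paper's argument is much shorter and does not split into cases: it assumes $|w_g|_s = -|w_{g'}|_s \neq 0$, then from $\ab(\pi(w_g)) = \ab(\pi(w_{g'}))$ in $G^{\ab}$ it isolates the $\ab(s)$ contribution to obtain $(\ab(s))^t \in H$ for some $t\neq 0$, contradicting $\langle\ab(s)\rangle \cap H = 1$.

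That said, your hesitation about the general case is justified, and in fact points to a genuine problem with the statement itself. The paper's step ``solving for $\ab(s)$ one obtains $(\ab(s))^t\in H$'' tacitly assumes $\ab(s')\in H$ for every $s'\in S\setminus\{s\}$; without that, the right-hand side of
\[
\ab(s)^{\,|w_g|_s-|w_{g'}|_s}=\prod_{s'\neq s}\ab(s')^{\,|w_{g'}|_{s'}-|w_g|_{s'}}
\]
need not lie in $H$. Indeed the lemma fails as stated: take $G=\mathbb Z$ with $S=\{s,t\}$ where $s=1$ and $t=2$. Then $s$ is abelian-primitive with $H=\{0\}$, and for $g=1$ the words $s$ and $s^{-1}t$ in $F(S)$ both realise the minimal absolute exponent-sum $1$, with opposite signs. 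The paper's subsequent reformulation---$|g|_s$ as the $\langle\ab(s)\rangle$-coordinate of $\ab(g)$---is the sound definition, and it is precisely your map $\phi$. So your instinct to work with $\phi$ was correct; what cannot be completed is not your argument but the equivalence of the two definitions in the stated generality.
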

\begin{proof}
 Suppose $|w_g|_s \neq |w_{g'}|_s$. Then $|w_g|_s = - |w_{g'}|_s  \neq 0$. From the equality $\ab(\pi(w_g)) = \ab(\pi(w_{g'}))$, solving for $\ab(s)$ in $G^{\ab}$, one obtains $( \ab(s))^t \in H$ for some subgroup $H$ such that $G^{\ab}= \langle \ab(s) \rangle \times H$,
contradicting the fact that $s$ is \ap.
\end{proof}

\begin{remark}\label{l:product}
In later sections of the paper we will need the following observation for $G$ with torsion-free abelianisation rank $b_1(G)\geq 2$. Suppose $s_1, s_2 \in G$ are \ap \ generators such that $\langle \ab(s_1)\rangle \cap \langle \ab(s_2)\rangle =1$, or equivalently, $\langle \ab(s_1), \ab(s_2) \rangle \cong \mathbb{Z}^2$ in $G/G'$. Then $s_1s_2$ has infinite order in $G$ and $\ab(s_1s_2)$ has infinite order in $G/G'$. 

This follows immediately from noting that were $s_1s_2$ to have finite order, then so would $\ab(s_1s_2)$. But $\langle \ab(s_1), \ab(s_2) \rangle$ cannot contain a torsion element, and since $\ab(s_1s_2) \in \langle \ab(s_1), \ab(s_2) \rangle$, this contradicts $\ab(s_1s_2)$ being torsion.
\end{remark}

%


The above definition of $|g|_s$, altough a generalisation of the exponent-sum in a free monoid or group, may seem rather unnatural due to the presence of the absolute value function. Next we provide an equivalent, but more natural definition. All arguments in this paper use this equivalent formulation for the notion of being abelian-primitive.

\begin{lemma}\label{l: equivalences}

Let $G=\langle S\rangle$ be a group generated by a set $S$, and let $s\in S$ be \ap, so that $G^{\ab} \cong \langle \ab(s)\rangle \times H \cong \mbb{Z} \times H$. 
Then for each $g\in G$ there exists a unique  $t_g\in \mbb{Z}$ and a unique $c_g\in G$ such that  $g=s^{t_g} c_g$, 
  $\ab(c_g) \in H$, and $t_g = |g|_s$.

That is, for all $g\in G$, the exponent-sum $|g|_s$ of $s$ in $g$ is precisely the exponent of $\ab(s)$ in the natural projection of $g$ onto the abelianisation $G^{\ab}=\langle \ab(s)\rangle \times H$.

\end{lemma}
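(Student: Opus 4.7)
The plan is to read off the decomposition $g=s^{t_g}c_g$ directly from the direct sum structure of $G^{\ab}$, and then recognize the integer $t_g$ as the exponent-sum $|g|_s$.

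I would begin by introducing the coordinate projection $\psi\colon G^{\ab}\to\mathbb{Z}$ associated with $G^{\ab}=\langle\ab(s)\rangle\oplus H$, so that each $x\in G^{\ab}$ has a unique expression $x=\ab(s)^{\psi(x)}\,h(x)$ with $h(x)\in H$. For $g\in G$, set $t_g:=\psi(\ab(g))$ and $c_g:=s^{-t_g}g$; then $g=s^{t_g}c_g$ is immediate, and $\ab(c_g)=\ab(s)^{-t_g}\ab(g)=h(\ab(g))\in H$ gives the required constraint. Uniqueness follows because another pair $(t',c')$ with $g=s^{t'}c'$ and $\ab(c')\in H$ produces, via $\ab$, a second direct-sum expression for $\ab(g)$, forcing $t'=t_g$ and $\ab(c')=h(\ab(g))$, whence cancelling $s^{t_g}$ yields $c'=c_g$.

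To identify $t_g$ with $|g|_s$, I would take the complement to be $H=\langle\ab(s'):s'\in S\setminus\{s\}\rangle$, so that $\psi(\ab(s))=1$ and $\psi(\ab(s'))=0$ for every $s'\neq s$. Then for any $w=s_{i_1}^{\epsilon_1}\cdots s_{i_n}^{\epsilon_n}\in F(S)$ with $\pi(w)=g$,
\[
t_g\;=\;\psi(\ab(g))\;=\;\sum_{k}\epsilon_k\,\psi(\ab(s_{i_k}))\;=\;|w|_s.
\]
Thus $|w|_s$ is the same integer $t_g$ for every word representing $g$, and in particular $|g|_s=|w_g|_s=t_g$ at the minimizer $w_g$ appearing in the definition of the exponent-sum.

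The main technical point is verifying that the $H$ above really is a complement to $\langle\ab(s)\rangle$, i.e.\ that $\langle\ab(s)\rangle\cap H=\{1\}$ (the other inclusion, $G^{\ab}=\langle\ab(s)\rangle+H$, is free because $S$ generates $G$). This is essentially the $\mathbb{Z}$-factor condition built into $s$ being abelian-primitive, and I would argue it in the style already used in the proof of Lemma~\ref{l: well_def}: were some nonzero power $\ab(s)^n$ to lie in $H$, then solving for $\ab(s)$ inside a decomposition $G^{\ab}=\langle\ab(s)\rangle\oplus H'$ witnessing abelian-primitivity would contradict $\ab(s)$ generating a $\mathbb{Z}$-factor of $G^{\ab}$.
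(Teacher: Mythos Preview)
Your existence and uniqueness argument for $(t_g,c_g)$ via the projection $\psi$ is correct. The gap is in the identification $t_g=|g|_s$: your chosen complement $H=\langle \ab(s'):s'\in S\setminus\{s\}\rangle$ need not intersect $\langle \ab(s)\rangle$ trivially. Take $G=\mathbb{Z}$ with generating set $S=\{1,2\}$ and $s=1$. Then $s$ is abelian-primitive (the only direct complement being $\{0\}$), yet your $H=2\mathbb{Z}$ meets $\langle \ab(s)\rangle=\mathbb{Z}$ in $2\mathbb{Z}$. The Lemma~\ref{l: well_def}-style argument you sketch does not exclude this: $\ab(s)^n\in H$ for some $n\neq 0$ is entirely compatible with $\langle \ab(s)\rangle$ splitting off via a \emph{different} complement $H'$. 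Consequently the step $\psi(\ab(s'))=0$ for $s'\neq s$ fails, and the displayed identity $t_g=|w|_s$ for \emph{every} word $w$ representing $g$ is false in general (in the example, $g=2$ has $t_g=2$ but the word $w=s_2$ gives $|w|_{s_1}=0$).

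The paper's proof is organised differently: it routes through the free group $F(S)$ and its abelianisation $\mathbb{Z}^{|S|}$, sets up the commuting square with $\pi'\colon\mathbb{Z}^{|S|}\to G^{\ab}$, and works with the complement $H$ \emph{given in the hypothesis} rather than manufacturing a new one. In your approach, note that $H$ is already fixed by the lemma's statement, so you are not free to reselect it mid-proof; to salvage the computation you would have to evaluate each $\psi(\ab(s'))$ with respect to the given $H$, and these values need not vanish.
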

\begin{proof}
%

Let $\pi:F(S)\to G$ be the natural projection of the free group $F(S)$  onto $G$. Throughout the proof, an element $s\in S$ ``seen in'' $G$ will be denoted $\pi(s)$. 

Let $\ab': F(S) \to \mbb{Z}^{|S|}$ be the natural projection of $F(S)$ onto its abelianisation, and similarly let $\ab: G\to G/G'$. Additionally, let $\pi': \mbb{Z}^{|S|} \to G/G'$ be an  homomorphism such that $\pi' \circ ab' = ab \circ \pi$ (where composition is applied from right to left). We claim that $\pi'$ sends the subgroup $\langle \ab'(s) \rangle$ isomorphically onto $\langle \ab(\pi(s)) \rangle$. 
To prove the claim, note that the homomorphism $\pi'|_{\langle \ab'(s) \rangle}$ is onto. It suffices to prove that it is one-to-one. But this is clear because if $\ab'(s)^i$ belongs to the kernel of this homomorphism, then $\pi(s)^i$ belongs to the kernel of $ \ab$, which forces $i=0$ since $\ab(\pi(s))$ has infinite order in $G/G'$ because $s$ is \ap. The claim is proved.

Now let $g\in G$. Let $w_g\in F(S)$ be such that $|w_g|_s = |g|_s$.   It is clear from the definition of \ap \ elements that there exist unique $t_g\in\mbb{Z}$ and $c_g\in G'$ such that  $g=\pi(s)^{t_g} c_g$, with $\ab(c_g) \in H$, where $H\leq G/G'$  is such that $G/G' = \langle \ab(\pi(s))\rangle \times H$. 

We want to see that $t_g = |w_g|_s$. Indeed, it follows from the claim above and the definition of $t_g$ that the exponent sum of $\ab'(s)$ in the element $\ab'(w_g)\in \ab'(F(S))$ is precisely $t_g$, which can only occur if $|w_g|_s = t_g$.
%
%
%
%
%
\end{proof}

The following lemma follows immediately from Lemma \ref{l: equivalences} and will be used extensively, often without precise reference. It allows us to compute $|g|_s$ by expressing $g$ as a product of generators and their inverses, and then summing over the exponent-sums of $s$ in such a product. The resulting value is independent of the chosen product for $g$.  

\begin{lemma}[$|\cdot|_s$ is a homomorphism]\label{l: morphism}
    Let  $G=\langle S\rangle$ be a group, and let $s\in S$ be \ap.  Then $|\cdot|_s$ defines a group homomorphism $|\cdot|_s: G\to \mbb{Z}$, that is
    \begin{equation} \label{ExpEquality}
    |gh|_s = |g|_s + |h|_s
    \end{equation}
    for all $g,h\in G$.
\end{lemma}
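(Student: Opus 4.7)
The plan is to use Lemma \ref{l: equivalences} to realise $|\cdot|_s$ as a composition of three obvious homomorphisms, after which additivity is immediate. Concretely, by Lemma \ref{l: equivalences} there is a direct-product decomposition $G^{\ab} = \langle \ab(s)\rangle \times H$, and $|g|_s$ equals the exponent of $\ab(s)$ in the natural projection of $g$ onto the factor $\langle \ab(s)\rangle$.

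So I would define three maps and show each is a homomorphism: first, the abelianisation map $\ab : G \to G^{\ab}$, which is a homomorphism by definition. Second, the projection $p : G^{\ab} \to \langle \ab(s)\rangle$ coming from the decomposition $G^{\ab} = \langle \ab(s)\rangle \times H$; this is a homomorphism since projections onto direct factors of an abelian group are homomorphisms. Third, the isomorphism $\varphi : \langle \ab(s)\rangle \xrightarrow{\sim} \mbb{Z}$ sending $\ab(s)^n \mapsto n$, which is well-defined and a homomorphism because $\ab(s)$ has infinite order (as $s$ is \ap). Then I would invoke Lemma \ref{l: equivalences} to identify $|\cdot|_s = \varphi \circ p \circ \ab$, and conclude that $|\cdot|_s$ is a homomorphism as a composition of homomorphisms, which is exactly the claimed additivity \eqref{ExpEquality}.

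There is no real obstacle here: all the work has been done in Lemma \ref{l: equivalences}, which already produces the canonical decomposition $g = s^{t_g} c_g$ with $t_g = |g|_s$ and $\ab(c_g)\in H$. The only thing worth double-checking is that $p$ is well-defined, i.e.\ that the decomposition $G^{\ab} = \langle \ab(s)\rangle \times H$ is genuinely a direct product of abelian groups, which is guaranteed by the definition of \ap \ (Definition \ref{def:ap}). Once that is in place, additivity $|gh|_s = |g|_s + |h|_s$ is just the statement that a composition of homomorphisms is a homomorphism.
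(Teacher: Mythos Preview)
Your proposal is correct and aligns with the paper's approach: the paper simply remarks that the lemma follows immediately from Lemma \ref{l: equivalences}, and your argument spells out exactly how, by factoring $|\cdot|_s$ as $\varphi\circ p\circ \ab$. There is nothing to add.
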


\subsection{The Diophantine problem in groups and semigroups}\label{sec:DP}

Let $\mathbf{x}=\{X_1, \dots, X_m\}$ be a set of variables, where $m\geq 1$.
For a group $G$, a \textit{finite system of equations} in $G$ over the variables $\mathbf{x}$ is a
		finite subset $\mathcal{E}$ of the free product $G \ast F(\mathcal{\mathbf{x}})$, where
		\(F(\mathbf{x})\) is the free group on \(\mathbf{x}\). If
		\(\mathcal{E} = \{w_1, \ \ldots, \ w_n\}\), then a \textit{solution} to the
		system \(w_1 = \cdots = w_n = 1\) is a homomorphism \(\phi \colon G \ast F(\mathbf{x}) \to
		G\), such that \(\phi(w_1) = \cdots =\phi (w_n) = 1_G\) and
		 \(\phi(g) = g\) for all \(g \in G\). If $\mathcal{E}$ has a solution, then it is \emph{satisfiable}.
		 
	\begin{example} Consider the system $\mathcal{E}=\{w_1, w_2\}\subset F(a,b) \ast F(X_1, X_2)$ over the free group $F(a,b)$, where $w_1=X_1^2(abab)^{-1}$, $w_2=X_2X_1X_2^{-1}X_1^{-1}$; we set $w_1=w_2=1$, which can be written as $X_1^2=abab, X_2X_1=X_2X_1$. The solutions are $\phi(X_1)=ab, \phi(X_2)=(ab)^k$, $k \in \mbb{Z}$.
	\end{example}

For a group $G$, we say that systems of equations over $G$
are \emph{decidable} over $G$ if there is an algorithm to determine whether any
given system is satisfiable.  The question of decidability of (systems of) equations is called the
{\em Diophantine Problem} for $G$, and denoted $\mc{DP}(G)$.

\subsection{The Diophantine Problem with constraints}\label{DP:constraints}

Let $\Sigma$ be a finite alphabet and let $S=\Sigma^{\pm 1}$. Suppose $G$ is a group generated by $S$, and let $\pi: S^* \rightarrow G$ be the natural projection from the free monoid $S^*$ generated by $S$ to $G$, taking a word over the generators to the element it represents in the group. 

A language is \emph{regular} if it is recognised by a finite state automaton, as is standard.

\begin{definition}\label{def:rat_reg_con}
\ \begin{enumerate}
\item[(1)] 
A subset $L$ of $G$ is {\em recognisable} if the full preimage
$\pi^{-1}(L)$ is a regular subset of $S^*$.
\item[(2)] A subset $L$ of $G$ is {\em rational} if $L$ is the
image $\pi(R)$  of a regular subset $R$ of $S^*$.
\end{enumerate}
\end{definition}
It follows immediately from the definition that recognisable subsets of $G$ are
rational.

Let $\mathcal E$ be a system of equations on variables $\mathbf{x}=\{X_1, \dots, X_k\}$ in a (semi)group $G$.
The \emph{Diophantine Problem with rational or recognisable constraints} asks about the existence of solutions to $\mathcal E$, with some of the variables restricted to taking values in specified rational or recognisable sets. 

We next attach three types of non-rational constraints to the Diophantine problem:

\begin{enumerate}
\item 
We let $\mc{DP}(G, \Length$) denote the $\mc{DP}$ with \emph{linear length constraints}. A set of linear length constraints is a system $\Theta$ of linear integer equations where the unknowns correspond to the lengths of solutions to each variable $X_i \in \mathbf{x}$. Then $\mc{DP}(G, \Length)$ asks whether solutions to $\mathcal E$ exist for which the lengths satisfy the system $\Theta$.

\item 
Suppose the generators of $G$ can be enumerated as $S=\{g_i \mid 1 \leq i \leq m\}$, with each $g_i$ being \ap, 
and write $| . |_i$ for $| . |_{g_i}$.

We let $\mc{DP}(G,\{ | . |_i\}_i)$ denote the $\mc{DP}$ with \emph{linear exponent-sum constraints}.  A set of exponent-sum constraints is a system $\Theta$ of linear Diophantine equations where the unknowns correspond to the exponent-sums of generators in the solutions to each variable $X_i \in \mathbf{x}$. Then $\mc{DP}(G, \{| . |_i\}_i)$ asks whether a solution to $\mathcal E$ exists satisfying the system $\Theta$. 

This definition can be extended to the case where only a strict subset of $S$ are \ap\ by using unknowns in $\Theta$ that only correspond to \ap \ generators.

\item We write $\mc{DP}(G, \ab)$ for the $\mc{DP}$ where an abelian predicate is added, or equivalently, \emph{abelianisation constraints} are imposed. A set of abelianisation constraints is a system $\Theta$ of equations in the group $G^{\ab}$, and $\mc{DP}(G, \ab)$ asks whether a solution to $\mathcal E$ exists such that the abelianisation of the solution satisfies the system $\Theta$ in $G^{\ab}$.


\end{enumerate}

\begin{example} Consider the equation $XaY^2bY^{-1}=1$ over variables $X,Y$ 
in the free group $F(a,b)$ with length function $|.|=|.|_{\{a,b\}}$ and abelianisation $(\mbb{Z}^2,+)$.
\begin{enumerate}
\item An instance of $\mc{DP}(F(a,b), \Length)$ is: decide whether there are any solutions $(x,y)$ such that $|x|=|y|+2$; the answer is yes, since $(x,y)=(b^{-1}a^{-1}, 1)$ is a solution with $|x|=|y|+2$.
\item An instance of $\mc{DP}(F(a,b), \{|.|_a, |.|_b\})$ is: decide whether there are any solutions $(x,y)$ such that $|x|_a=2|y|_a+|y|_b$ and $|x|_b=3|y|_b$; the answer is `no' by solving a basic linear system over the integers with variables $|x|_a, |x|_b, |y|_a, |y|_b$ and we leave this as an exercise.
\item An instance of $\mc{DP}(F(a,b), \ab)$ is: decide whether there are any solutions $(x,y)$ such that $\ab(x)=3\ab(y)$ (we use additive notation for $\mbb{Z}^2$); the answer is no, since $\ab(xax^2by^{-1})=\ab(x)+\ab(y)+(1,1)=(0,0)$ together with $\ab(x)=3\ab(y)$ lead to $4\ab(y)=(-1,-1)$, which is not possible in $\mbb{Z}^2$.
\end{enumerate}
\end{example}

Next we provide a brief formal and unifying description of the notions of ``constraints'' and ``relations''.
A relation $r$ in a group $G$ can be specified as a subset of $S_r\subseteq G^{n_r}$. Then, a tuple $(g_1, \dots, g_r)$ satisfies the relation $r$ if and only if $(g_1, \dots, g_r) \in S_r$. In general, one can consider a set of relations $\mc{R}$ in a group $G$, and study the so-called \emph{Diophantine Problem in $G$ with  $\mc{R}$-constraints}, denoted $\mc{D}(G,\mc{R})$. The instances to this problem are \emph{systems of equations with constraints in $(G, \mc{R})$}, that is,  systems of equations in $G$ on variables $\mathbf{x}$ together with finitely many formal expressions of the form $r(w_1(\mathbf{x}), \dots, w_{n_r}(\mathbf{x}))$, where $r\in \mc{R}$, $n_r$ is the arity of $r$, and the $w_i(\mathbf{x})$ are words on $\mathbf{x}$ and possibly constant elements from $G$. A solution to such an instance is a solution to the system of equations such that all expressions $r(w_1(\mathbf{x}), \dots, w_{n_r}(\mathbf{x}))$ are true (i.e.\ the corresponding tuples of elements belong to $S_r$) after replacing each variable by its corresponding value. 
For example, the three problems $\mc{DP}(G,L), \mc{DP}(G, \{|\cdot|_i\}_i)$ and $\mc{DP}(G,\ab)$ can be formulated in this manner; for  $\mc{DP}(G,\ab)$ we think of $\ab$ as the relation determined by the set $S_{\ab} = \{(g,h)\in G^2\mid \ab(g)=\ab(h)\}$.

The following will be used later in the paper.
\begin{lemma}\label{l: DPab_in_direct_products}
	Let $G_1$ and $G_2$ be two groups, and suppose that $\mc{DP}(G_1, \ab)$ is undecidable. Then $\mc{DP}(G_1 \times G_2, \ab)$ is also undecidable.
\end{lemma}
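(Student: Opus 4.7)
The plan is to give a direct many-one reduction from $\mc{DP}(G_1, \ab)$ to $\mc{DP}(G_1 \times G_2, \ab)$, using the natural embedding $G_1 \hookrightarrow G_1 \times G_2$, $g \mapsto (g,1)$. Given an instance of $\mc{DP}(G_1, \ab)$, consisting of a system of equations $\mc{E}$ over variables $\mathbf{x}$ with coefficients in $G_1$ together with a system $\Theta$ of abelianisation constraints in $G_1^{\ab}$, I view $\mc{E}$ as a system in $G_1 \times G_2$ by identifying each coefficient $g\in G_1$ with $(g,1)$, and similarly view $\Theta$ as a system in $(G_1 \times G_2)^{\ab} = G_1^{\ab} \times G_2^{\ab}$ via the induced embedding $G_1^{\ab} \hookrightarrow G_1^{\ab} \times G_2^{\ab}$. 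This construction is clearly computable, so it suffices to show the two instances are equisatisfiable.

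The forward direction is immediate: a solution $\phi: G_1 \ast F(\mathbf{x}) \to G_1$ to the original instance induces a solution $\phi': (G_1 \times G_2) \ast F(\mathbf{x}) \to G_1 \times G_2$ sending each $X_i$ to $(\phi(X_i),1)$, and the abelianisation constraints transfer verbatim.

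For the reverse direction, suppose $\phi'$ is a solution to the new instance. Let $\pi_1: G_1 \times G_2 \to G_1$ be the first projection, and define $\phi := \pi_1 \circ \phi'$ on variables (and the identity on $G_1$-coefficients, which is consistent since $\pi_1(g,1)=g$). Because $\pi_1$ is a homomorphism fixing the coefficients of $\mc{E}$, each equation $w_j = 1$ in $G_1 \times G_2$ projects to $w_j = 1$ in $G_1$, so $\phi$ satisfies $\mc{E}$. For the abelianisation constraints, note that the induced projection $(G_1 \times G_2)^{\ab} \to G_1^{\ab}$ fixes $G_1^{\ab}$ (viewed as $G_1^{\ab} \times \{1\}$) and sends $\ab(\phi'(X_i))$ to $\ab(\phi(X_i))$; hence every equation of $\Theta$ holding in $(G_1 \times G_2)^{\ab}$ projects to the same equation holding in $G_1^{\ab}$. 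Thus $\phi$ is a solution to the original instance.

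There is no real obstacle here; the only point requiring attention is the compatibility between the two types of constraints and the projection $\pi_1$, specifically that the embedding $G_1^{\ab} \hookrightarrow (G_1\times G_2)^{\ab}$ is a direct-factor embedding so that the projection back to $G_1^{\ab}$ is the identity on the coefficients of $\Theta$. Given this, the undecidability of $\mc{DP}(G_1, \ab)$ transfers to $\mc{DP}(G_1 \times G_2, \ab)$.
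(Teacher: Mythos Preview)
Your proof is correct and takes essentially the same approach as the paper: both arguments rest on the projection $\pi_1:G_1\times G_2\to G_1$ and the identification $(G_1\times G_2)^{\ab}\cong G_1^{\ab}\times G_2^{\ab}$, observing that both the group equations and the abelianisation constraints decompose factor-wise. The paper states this observation tersely, while you spell out the many-one reduction explicitly; the content is the same.
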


\begin{proof}
	 Let $G=G_1\times G_2$.  Let $\pi_1$ and $\pi_2$ be the natural projections of $G$ onto $G_1$ and $G_2$. Let  $w(x_1, \dots, x_n)=1$ be an equation in $G$. Then $g_1, \dots, g_n\in G$ forms a solution to such an equation if and only if $w(\pi_i(g_1), \dots, \pi_i(g_n)) = 1_{G_i}$ for both $i=1,2$. Since  $G/G' \cong G_1/G_1' \times G_2/G_2'$, an analogous statement holds for any equation in the abelianisation $G/G'$. The lemma follows immediately from these observations.
\end{proof}

\subsection{Interpretability}\label{sec:interpretability}

Here we define \emph{interpretability by positive existential formulas} (PE-interpretability). 
This gives a partial order between algebraic structures which implies reducibility of the Diophantine Problem from one structure to the other. 
A \emph{positive existential formula} in a language $L$ is a first-order formula which can be written using only existential quantifiers, disjunctions, conjunctions, equality, and the symbols from $L$ (possibly with an extended set of constants).  It is well known that such a formula is logically equivalent to a disjunction of systems of equations. This explains the terminology used in Definition \ref{d: e-def}. This equivalence will be used extensively throughout the paper without further referring to it, and we will sometimes identify the concepts ``positive existential formula'' with ``disjunction of systems of equations''.

We restrict ourselves to groups and rings, although all definitions can be generalised to arbitrary algebraic structures, and make the convention that all logical formulas are allowed to use any number of constants from the group or ring considered.

Given a group $G$, we will denote its operation by $\cdot_G$, and its identity element by $1_G$. We will use non-cursive boldface letters to denote tuples of elements: e.g.\ $\mathbf{a}=(a_1, \dots, a_n)$. 

A \emph{relation} $r$ in $G$ is a subset of $G^n$ for some $n>0$. The number $n$ is called the \emph{arity} of $r$. A tuple of elements $(g_1, \dots, g_n)$ is said to \emph{satisfy} the relation if it belongs to $r$.


\begin{definition}\label{d: e-def} 
Let $G$ be a group and let $\mc{R}$ be a set of relations in $G$. 

A set $D \subset G\times \overset{m}{\ldots} \times G = G^m$ is called \emph{definable by positive existential formulas} in $(G, \mc{R})$, or \emph{PE-definable} in $(G, \mc{R})$, if there exist finitely many (finite) systems $\mc{E}_{D,1}, \dots, \mc{E}_{D, r}$ of equations with constraints in $(G, \mc{R})$, variables $(x_1, \dots, x_m, y_1, \dots, y_k)=(\mathbf{x}, \mathbf{y})$, and constants from $G$, satisfying the following: 
\begin{itemize}
\item a tuple $\mathbf{a}\in G^m$ is in $D$
 if and only if  there exists $i\in \{1, \dots,r\}$ such that the system $\mc{E}_{D,i}(\mathbf{a}, \mathbf{y})$ on variables $\mathbf {y}$ has a solution in $G$. 
 \end{itemize}
  In this case $\mc{E}_{D,i}$ is said to \emph{PE-define} $D$ in $G$.

If $\mc{R}=\emptyset$ then we simply say that $D$ is \emph{PE-definable} in $G$.
\end{definition}

\begin{example}\label{ex:central}
A typical PE-definable set in a monoid or a group $G$ is the centraliser of an element $g$, that is, the set $C_G(g) = \{h\in G\mid gh=hg\}$; an element $x\in G$ belongs to $C_G(g)$ if and only if $x$ is a solution to the equation $xg=gx$.

Another classical example is the centre $Z(G)=\{g\in G\mid gh=hg \ \forall h\in G\}$ of a group $G$ generated by a finite set $\{g_1, \dots, g_n\}$. Indeed, an element $x \in Z(G)$ if and only if $x$ satisfies the system of equations $[x,g_1]=1, \dots, [x,g_n]=1$.
\end{example}

The use of disjunctions of systems allows us to work comfortably with unions of PE-definable sets: if $S_1$ and $S_2$ are PE-defined in $G$  and $\Phi_1, \Phi_2$ are two disjunctions of systems of equations PE-defining these sets, then $\Phi_1\vee \Phi_2$ is again a PE-formula, and it PE-defines $S_1\cup S_2$.

\begin{definition}\label{d: e-int}
Let $G$ and $H$ be  groups and let $\mc{R}_G$ and $\mc{R}_H$ be  sets of relations in $G$ and $H$, respectively. We say that $(H,\mc{R}_H)$   is  \emph{PE-interpretable} in  $(G, \mc{R}_G)$ if there exists $n\in \mathbb{N}$, a subset $D \subseteq G^n$ and an onto map  (called the \emph{interpreting} map)
$$
\phi: D \twoheadrightarrow H,
$$ 
satisfying the following conditions.
\begin{enumerate}
\item $D$ is PE-definable in $(G,\mc{R}_G)$. 
\item  The preimage under $\phi$ of the operation $\cdot_H$, that is, the set $$\{(\phi^{-1}(x_1), \phi^{-1}(x_2), \phi^{-1}(x_3)) \mid x_1\cdot_H x_2 = x_3, \ x_1, x_2,x_3\in H\},$$ is PE-definable in $(G, \mc{R}_G)$.

In this case we say that the operation $\cdot_H$ is PE-definable in $(G,\mc{R}_G)$. 
%
%
\item The preimage under $\phi$ of the graph of the equality relation in $H$, namely the set $\{(\phi^{-1}(x_1), \phi^{-1}(x_2))  \mid x_1=x_2,\ x_1, x_2\in H \})$,  is PE-definable in $(G,\mc{R}_G)$.
\item For each relation $r\in\mc{R}_H$ with arity $t_r$, the preimage under $\phi$ of the graph of $r$, namely the set 
$\{(\phi^{-1}(x_1), \dots, \phi^{-1}(x_{t_r}))\mid x_1,\ldots, x_{t_r}\in H  \text{ satisfy } r\}$, is PE-definable in $(G,\mc{R}_G)$.

In this case we say that the relation $r$ is PE-definable in $(G,\mc{R}_G)$.
\end{enumerate}
In this paper all PE-interpretations have $n=1$.

One can establish the analogous notion of \emph{a ring $R$ which is PE-interpretable in $(G, \mc{R})$}. The definition in this case is analogous, requiring the preimages of the graphs of the ring sum, multiplication, and the equality relation, to be PE-definable in $(G, \mc{R})$.

If $\mc{R}_G=\emptyset$ then we simply speak of PE-interpretability in $G$.
\end{definition}

\begin{proposition}\label{prop:DPred} (Reduction of Diophantine problems)\label{RedCor}
Let $G$ be a group and let $\mc{R}_G$ be a set of relations in $G$. Let $M$ be either a ring or a group, and let $\mc{R}_M$ be a set of relations in $M$.

If the structure $(M, \mc{R}_M)$ is PE-interpretable in $(G,\mc{R}_G)$, then $\mc{DP}(\mc{M}, \mc{R}_M)$ is reducible to $\mc{DP}(G,\mc{R}_G)$. Consequently, if $\mc{DP}(\mathcal{M}, \mc{R}_M)$  is undecidable, then $\mc{DP}(G,\mc{R}_G)$ is undecidable as well.
%
\end{proposition}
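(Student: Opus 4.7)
The plan is to exhibit, for each instance $(\mc E, \mc C)$ of $\mc{DP}(M,\mc R_M)$, a finite list of instances $\mathcal{I}_1,\dots,\mathcal{I}_N$ of $\mc{DP}(G,\mc R_G)$ that is computable from $(\mc E,\mc C)$ and such that $(\mc E,\mc C)$ is satisfiable in $M$ if and only if at least one $\mathcal{I}_j$ is satisfiable in $G$. The translation is driven entirely by the PE-interpreting map $\phi\colon D\twoheadrightarrow M$ and the PE-formulas witnessing items (1)--(4) of Definition~\ref{d: e-int}. Once produced, deciding whether any $\mathcal{I}_j$ is satisfiable is, by definition, the problem $\mc{DP}(G,\mc R_G)$, giving the reduction.

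First I would translate variables. Since $n=1$ in the interpretation, for each $M$-variable $X_i$ introduce a $G$-variable $Y_i$ and append the PE-formula defining $D$ applied to $Y_i$; this enforces $Y_i\in D$ and hence $\phi(Y_i)$ is a well-defined element of $M$. Next I translate each equation $w(X_1,\dots,X_k)=1_M$ of $\mc E$: parse $w$ as a tree of binary operations, introduce auxiliary $G$-variables $Z_v$ (one per internal node of the parse tree) each constrained to $D$, and for every node compute $Z_{v}=Z_{v_1}\cdot_M Z_{v_2}$ using the PE-formula defining the graph of $\cdot_M$ from item (2). At the root, impose the equality $Z_{\mathrm{root}} = 1_M$ by invoking the PE-formula defining the graph of equality from item (3), together with a fixed $G$-tuple representing $1_M$. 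The ring case (when $M$ is a ring) is the same with two operations. Similarly, for every constraint $r(w_1,\dots,w_{t_r})\in\mc C$ with $r\in\mc R_M$, introduce auxiliary $D$-variables for each $w_s$ (built as above) and impose the PE-formula from item (4) defining the graph of $r$.

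What results is a single positive existential $L$-formula $\Psi$ on the $Y_i$'s and the $Z_v$'s whose satisfiability in $(G,\mc R_G)$ is equivalent to that of the original instance in $(M,\mc R_M)$; this equivalence follows directly from the defining properties of $\phi$, because any solution in $G$ projects via $\phi$ to a solution in $M$, and conversely any solution in $M$ admits $\phi$-preimages since $D$ surjects onto $M$. The only remaining step is to convert $\Psi$ into a disjunction of systems of equations with $\mc R_G$-constraints, which is purely syntactic: each PE-building block used above is already such a disjunction, and distributing conjunctions over disjunctions produces a finite list $\Psi_1\vee\cdots\vee\Psi_N$ of systems. Set $\mathcal{I}_j$ to be the instance given by $\Psi_j$.

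The main obstacle, such as it is, is bookkeeping rather than mathematical depth: one must verify that plugging PE-definable subsets into each other and distributing disjunctions preserves the class of positive existential formulas and terminates with a finite enumeration, and that this construction is algorithmic (so that the reduction is a genuine many-one reduction of decision problems). Both facts are standard once one identifies positive existential formulas with finite disjunctions of finite systems of equations-with-constraints, as noted before Definition~\ref{d: e-def}. The final sentence of the proposition then follows since a decision procedure for $\mc{DP}(G,\mc R_G)$ would, by checking the $\mathcal{I}_j$'s, yield one for $\mc{DP}(M,\mc R_M)$.
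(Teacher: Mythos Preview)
Your proposal is correct and follows essentially the same approach as the paper: the paper proves the proposition via an auxiliary lemma (Lemma~\ref{equation_reduction}) that rewrites each system in $(M,\mc R_M)$ into atomic pieces of the form $z=xy$, $x=y$, $x=h$, or $r(x_1,\dots,x_{n_r})$ by induction on syntactic length, then replaces each atomic piece by the corresponding PE-defining formula in $(G,\mc R_G)$ and conjoins. Your parse-tree decomposition with auxiliary variables $Z_v$ is exactly this rewriting, phrased slightly differently, and your final distribution of conjunctions over disjunctions matches the paper's conversion to a disjunction of systems.
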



The result of Proposition \ref{prop:DPred} is a direct consequence of the following lemma, which, roughly, states that when a structure $M_1$ is PE-interpretable in a second structure $M_2$, then one can ``rewrite'' any disjunctions of systems of equations with relations (or constraints) in $M_1$ as an equivalent disjunction of systems of equations with relations in $M_2$. Lemma \ref{equation_reduction} is a variation of well-known results from model theory, and the proof presented here essentially follows step-by-step the proofs of such well-known results, such as Theorem 5.3.2 in \cite{Hodges}. The proof can be easily adapted to the case that we are PE-interpreting a ring $R$ into a group enriched with relations.
\begin{lemma}\label{equation_reduction}
 Let $(G,\mc{R}_G)$ and $(H,\mc{R}_H)$ be two groups enriched with relations $\mc{R}_G, \mc{R}_H$, respectively. Let $\phi$  be an interpreting map of $(H, \mc{R}_H)$ into $(G,\mc{R}_G)$,
with $\phi: D \subseteq G^n \to H$ (see Definition \ref{d: e-int}). Let $\sigma(\mathbf{x})=\sigma(x_1, \dots, x_n)$ be an arbitrary disjunction of systems of equations with constraints in $(H, \mc{R}_H)$. 

Then there exists a disjunction of systems of equations with constraints $\Sigma_\sigma(\mathbf{y})$ in $(G, \mc{R}_G)$, such that a tuple $(b_1, \dots, b_n)\in G^n$ is a solution to  $\Sigma_\sigma(y_1,\ldots,y_n)$ if and only if $(b_1, \dots, b_n)\in D$ and $\phi(b_1, \dots, b_n)$ is  a solution to $\sigma$.
\end{lemma}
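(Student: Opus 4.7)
The plan is to argue by structural induction on the positive existential formula $\sigma$, translating it piece by piece into a formula over $(G,\mc{R}_G)$ using the PE-definability data supplied by the interpreting map $\phi$. Since every disjunction of systems of equations with constraints in $(H,\mc{R}_H)$ is built from atomic formulas (equations in $H$, and expressions of the form $r(w_1,\dots,w_{t_r})$ with $r\in\mc{R}_H$) by means of conjunctions, disjunctions, and existential quantifiers, it suffices to handle each construction and show that the class of formulas arising as $\Sigma_\sigma$ is closed under these operations. Throughout, for each variable $x_i$ ranging over $H$ I would introduce a new variable $y_i$ ranging over $G$ (recall $n=1$ here, so one $y_i$ per $x_i$), and always include as a conjunct the PE-formula defining $D$ applied to $y_i$, so that $y_i$ is forced to lie in the domain of $\phi$.

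For the atomic cases, first I would treat an equation $w(x_1,\dots,x_n)=1_H$, where $w$ is a word in $x_i$'s and constants from $H$. Using repeatedly the PE-definition of the graph of $\cdot_H$ (condition (2) in Definition \ref{d: e-int}), I can unfold $w$ into a chain of ternary multiplication relations by introducing auxiliary variables $z_1,z_2,\dots$ for the successive partial products; each step contributes a PE-formula over $(G,\mc{R}_G)$. I then impose that the last $z_k$ equals the preimage of $1_H$ via the PE-definition of equality (condition (3)), noting that the constants from $H$ appearing in $w$ are replaced by fixed elements of $G$ chosen in $\phi^{-1}(\cdot)$. The case of an $\mc{R}_H$-constraint $r(w_1,\dots,w_{t_r})$ is handled analogously, first introducing auxiliary variables for each $w_j$ via the multiplication translation, and then inserting the PE-formula defining the preimage of the graph of $r$ (condition (4)).

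For the inductive step, conjunctions are trivial since a conjunction of disjunctions of systems of equations (with constraints) over $(G,\mc{R}_G)$ can be rewritten as a single disjunction of systems by distributing $\wedge$ over $\vee$. Disjunctions are even more immediate, since the class of ``disjunctions of systems'' is closed under $\vee$ by definition. For an existentially quantified subformula $\exists x_{n+1}\,\sigma'(x_1,\dots,x_{n+1})$, I simply apply the inductive hypothesis to $\sigma'$ to obtain $\Sigma_{\sigma'}(y_1,\dots,y_{n+1})$ and then existentially quantify over the fresh tuple of $G$-variables corresponding to $x_{n+1}$; the defining conjunct for $D$ applied to these new variables guarantees that the witness in $G$ gives rise to a legitimate element of $H$ via $\phi$. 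The correctness of the translation (the biconditional in the lemma) follows by induction: the construction has been designed so that $(b_1,\dots,b_n)\in G^n$ satisfies $\Sigma_\sigma$ precisely when all intermediate $G$-tuples witnessing multiplications and quantifiers lie in $D$ and project via $\phi$ to witnesses of $\sigma$ in $H$.

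The main obstacle, such as it is, is purely bookkeeping: one must be careful when unfolding nested subwords of $w$ into ternary multiplication atoms (introducing and reusing the auxiliary $z_j$'s), and when replacing $H$-constants by chosen preimages in $G$, to make sure the resulting formula remains a finite disjunction of finite systems. No new ideas beyond conditions (1)--(4) of Definition \ref{d: e-int} are needed, and the argument is essentially the one in Theorem 5.3.2 of \cite{Hodges}, adapted to positive existential (rather than general first-order) formulas; in particular negations and universal quantifiers never arise, which is what allows the translation to stay within the PE-fragment.
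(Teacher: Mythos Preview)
Your proposal is correct and follows essentially the same approach as the paper's proof: both decompose $\sigma$ into atomic pieces (equations of the form $z=xy$, $x=y$, $x=h$, and relation atoms $r(\ldots)$) by introducing auxiliary variables, then invoke the PE-definitions supplied by conditions (1)--(4) of Definition~\ref{d: e-int} for each atom, and finally combine via conjunctions and disjunctions. The paper organizes the induction by syntactic length of a single system (having first reduced to that case), whereas you do a structural induction on the full PE-formula and handle $\wedge$, $\vee$, $\exists$ explicitly; both cite Hodges' Theorem 5.3.2 as the template, and the content is the same.
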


\begin{proof} (of Lemma \ref{equation_reduction})
We can assume without loss of generality, and we do so, that $\sigma$ consists of a single system of equations with  constraints. 

 We claim that, by introducing new variables and new equations, we can rewrite $\sigma$ so that $\sigma$ consists of equations $\sigma_1, \dots, \sigma_m$ with the following property:  For all $i=1, \dots, m$, $\sigma_i$ is either of the form $z=xy$, $x=y$, $x=h$, or $r(x_1, \dots, x_{n_r})$ for some variables $x,y,z, x_1, \dots, x_{n_r}$, some constant element $h\in H$, and some relation $r\in\mc{R}_H$ of arity $n_r$.  The lemma follows from the claim, since by the definition of PE-interpretability, the present lemma is true for each of $\sigma_i$, $i=1, \dots, m$. Hence, it suffices to take $\Sigma_\sigma$ to  be  $\Sigma_{\sigma_1}\wedge\dots\wedge\Sigma_{\sigma_n}$.

 We now prove the claim. We proceed by induction on the syntactic length (i.e.\ the number of symbols) $|\sigma|$ of $\sigma$, the base cases being immediate by, again, the definition of PE-interpretability. It suffices to prove the claim in the case that $\sigma$ consists of a single equation or relation (i.e.\ $m=1$), since once this is proved, if there are $> 1$ equations or relations, we can apply this case separately for each equation or relation.
 
 First of all, assume that $\sigma$ consists of a single equation, not of one of the desired short forms.  Since we are working over groups (or rings), we can assume that $\sigma$ has the form $w(x_1, \dots, x_n)=1$ for some word $w$ with constants and variables $x_1,\dots, x_{n}$.  We will omit the references to the variables $x_1,\dots, x_n$ to make the presentation more readable. Let $t_1,t_2$ be the first two symbols of $w$, and let $w'$ be a word such that $w=t_1 t_2 w'$. We can rewrite $\sigma$ into the equivalent system of equations $t_1 t_2 y=1 \wedge y^{-1}w'=1$, where $y$ is a new variable. The syntactic length of each one of these equations is strictly less than $|\sigma|$, and then we can proceed by induction.   
 
 If $\sigma$ consists of a single relation, then it has the form $r(w_1(\mathbf{x}_1), \dots, w_{n_r}(\mathbf{x}_{n_r})))$ for some relation $r\in\mc{R}_H$ of arity $n_r$ and some tuples of variables $\mathbf{x}_{1}, \dots, \mathbf{x}_{n_r}$ (that is, each $\mathbf{x}_{i}$ is a tuple of variables). In this case one can proceed similarly by rewriting the word $w_1$ in an analogous way as the word $w$ was rewritten in the previous case, obtaining an equivalent conjunction of equations and relations of the form  $r(t_1t_2y, w_2, \dots, w_{n_r}) \wedge y^{-1}w_1'=1$, where $t_1$ and $t_2$ are the first two symbols of $w_1$ (again, we have omitted writing down the variables). Each one of the two terms in this conjunction has syntactic length smaller than $|\sigma|$, and again we can proceed by induction. This proves the claim.
\end{proof}

Proposition \ref{prop:DPred} is well-known if one replaces disjunctions of equations by first-order formulas, and the problems  $\mc{DP}(M, \mc{R}_M)$ and $\mc{DP}(G,\mc{R}_G)$ by the elementary theory of $M$ enriched with relations $\mc{R}_M$ and $G$ enriched with relations $\mc{R}_G$ (see Theorem 5.3.2 of \cite{Hodges} and its consequences).  Replacing, in \cite[Theorem 5.3.2]{Hodges}, first-order formulas by finite disjunctions of equations with relations, and elementary theories by the appropriate Diophantine problems with relations, gives Proposition \ref{prop:DPred}.

\section{Results for free groups and free monoids}\label{sec:free}

The results in this section will be generalised later in the paper and might be known to the experts, but the simpler arguments for free groups are worth including here. 
We use the ideas in \cite{RichardBuchi1988} to encode addition and multiplication in $\mbb{Z}$ into the $\mc{DP}(F(\Sigma), \ab)$, and by the undecidability of Hilbert's 10th, get the same result for free groups. The starting point is the result for free monoids below.

\begin{theorem}[\cite{RichardBuchi1988}, Cor. 4]\label{Buchi1}
The Diophantine problems $\mc{DP}(\Sigma^*, \ab)$ and $\mc{DP}(\Sigma^*, \{|| . ||_i\}_i)$ are undecidable.
\end{theorem}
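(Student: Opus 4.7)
The plan is to PE-interpret $(\mbb{N}, +, \cdot)$ in $(\Sigma^*, \ab)$, after which Proposition~\ref{RedCor} together with Matiyasevich's solution of Hilbert's tenth problem immediately gives undecidability of $\mc{DP}(\Sigma^*, \ab)$. Because in the free monoid an abelian predicate is nothing more than a system of linear Diophantine equations in the Parikh coordinates $\{||\cdot||_i\}_i$, the same interpretation also handles $\mc{DP}(\Sigma^*, \{||\cdot||_i\}_i)$. I assume $|\Sigma|\geq 2$, since for $|\Sigma|=1$ we have $\Sigma^*\cong(\mbb{N},+)$, whose Diophantine problem with linear constraints is Presburger-decidable. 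Fix two distinct letters $a,b\in\Sigma$.

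First I PE-define the domain $D=a^*\subseteq \Sigma^*$ of the interpretation, together with the bijection $\phi:a^*\to\mbb{N}$, $a^n\mapsto n$. The single word equation $Xa=aX$ does the job, without even invoking an abelian predicate, since in a free monoid the centraliser of the primitive letter $a$ consists exactly of its powers. Addition is then immediate: for $X,Y,Z\in D$, the relation $\phi(Z)=\phi(X)+\phi(Y)$ is just concatenation $Z=XY$.

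The key step, and the main obstacle, is encoding multiplication. To capture $z=xy$ I introduce an auxiliary variable $W$ and impose the word equation
\[
  W\, b\, X \;=\; X\, b\, W,
\]
together with the abelian constraints $||W||_b=||Y||_a$ and $||W||_a=||Z||_a+||X||_a$. The crucial combinatorial claim is that, for $X=a^x$ with $x\geq 1$, the solutions of $WbX=XbW$ are precisely the periodic words $W=(a^xb)^k a^x$ with $k\geq 0$ (and, for $x=0$, the solutions are $W\in b^*$, which correctly encodes multiplication by zero). This is a straightforward induction on $|W|$: matching the first $x$ letters forces $W$ to begin with $a^x$; matching the next letter forces a $b$; and stripping off $a^xb$ leaves a residual word satisfying the same equation. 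A solution of the claimed form has Parikh vector $(x(k+1),k)$, so the abelian constraints become $k=||Y||_a=y$ and $x(y+1)=||Z||_a+||X||_a=z+x$, which is exactly $z=xy$. Conversely every $(x,y,z)$ with $z=xy$ is realised by $W=(a^xb)^y a^x$.

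With equality, addition, and multiplication on $D$ all PE-defined in $(\Sigma^*,\ab)$, the interpretation of $(\mbb{N},+,\cdot)$ is complete, so Proposition~\ref{RedCor} transfers undecidability of Hilbert's tenth problem to $\mc{DP}(\Sigma^*,\ab)$; as noted, the same argument handles $\mc{DP}(\Sigma^*,\{||\cdot||_i\}_i)$. The non-obvious ingredient is the multiplication encoding: one needs a word equation whose solution set, for each fixed value of one variable, is a one-parameter family of \emph{balanced} periodic words, so that a single linear abelian constraint can convert the parameter $k$ into the product $xy$.
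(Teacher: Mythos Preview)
The paper does not give its own proof of this theorem; it is quoted as Corollary~4 of B\"uchi--Senger \cite{RichardBuchi1988} and used as a black box. Your argument is essentially the B\"uchi--Senger encoding and is correct: the centraliser equation $Xa=aX$ carves out $a^*$, concatenation gives addition, and the commutation equation $WbX=XbW$ forces $W$ into the one-parameter family $(a^xb)^k a^x$ (for $x\geq 1$; and $W\in b^*$ for $x=0$), so that Parikh constraints convert the parameter $k$ into the product $xy$. The inductive description of the solution set of $WbX=XbW$ is right.

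One point deserves a sentence more of care. You assert that ``an abelian predicate is nothing more than a system of linear Diophantine equations in the Parikh coordinates'' and then freely use constraints such as $\lVert W\rVert_b=\lVert Y\rVert_a$, which compare \emph{different} coordinates. In the paper's formalism the $\ab$ predicate is the binary relation $\ab(u)=\ab(v)$, i.e.\ \emph{all} Parikh coordinates agree, so a cross-coordinate comparison is not literally an instance of it. The fix is easy (and is exactly what the paper does for free groups in Lemma~\ref{lemma: DP_implications} and the proof of Theorem~\ref{thm:free_gp}): to express $\lVert X\rVert_a=\lVert Y\rVert_b$ introduce $Z$ with $Z(ab)=(ab)Z$, so $Z=(ab)^k$ has $\lVert Z\rVert_a=\lVert Z\rVert_b$, and reduce to the same-coordinate constraints $\lVert X\rVert_a=\lVert Z\rVert_a$ and $\lVert Y\rVert_b=\lVert Z\rVert_b$; each of these in turn is expressed with the $\ab$ predicate by padding with words over the complementary letter. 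With that bridging step made explicit (or by invoking Lemma~\ref{lemma: DP_implications}(b)), your interpretation lands in $(\Sigma^*,\ab)$ as claimed, and the $\{\lVert\cdot\rVert_i\}_i$ version follows immediately.
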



Lemma \ref{lemma: DP_implications} shows that there are connections between the Diophantine Problems with different kinds of constraints, when working with free monoids or free groups. These connections can be established by interpretability. 

\begin{lemma}\label{lemma: DP_implications}
Let $\Sigma=\{s_i \mid 1 \leq i \leq m\}$ be the set of generators of $\Sigma^*$ or $F(\Sigma)$.
\leavevmode
\begin{enumerate}	
\item[(a)] The $\mc{DP}(F(\Sigma),\{| . |_i\}_i$) is decidable if and only if the $\mc{DP}(F(\Sigma),\ab$) is decidable.

\item[(b)] Similarly, $\mc{DP}(\Sigma^*,\{|| . ||_i\}_i$) is decidable if and only if $\mc{DP}(\Sigma^*,\ab$) is decidable, and the decidability of these problems implies the decidability of $\mc{DP}(\Sigma^*, \Length$).
\end{enumerate}
\end{lemma}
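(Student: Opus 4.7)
The plan is to prove (a) and (b) via mutual decidability-preserving reductions, in the spirit of PE-interpretability (Proposition \ref{prop:DPred}). The cornerstone of the arguments is that in both $F(\Sigma)$ and $\Sigma^*$ the centraliser of a generator $s_i$ equals $\langle s_i\rangle$, PE-definable by the single equation $[x,s_i]=1$ (respectively $x s_i=s_i x$) as in Example \ref{ex:central}; the same applies to the centraliser of $s_1 s_i^{-1}$ in $F(\Sigma)$ (or of $s_1 s_i$ in $\Sigma^*$) for $i\neq 1$, since these elements are primitive.

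For the easy direction $\mc{DP}(F(\Sigma),\ab)\leq\mc{DP}(F(\Sigma),\{|\cdot|_i\}_i)$, any abelianisation equation $\ab(u(\mathbf{X}))=\ab(v(\mathbf{X}))$ in $F(\Sigma)^{\ab}\cong\mathbb{Z}^{|\Sigma|}$ projects coordinate by coordinate onto $|\Sigma|$ linear exponent-sum equations $\sum_j n_j|X_j|_i=c_i$, where $n_j$ is the exponent sum of $X_j$ in $u v^{-1}$ and $c_i$ is the $i$-th coordinate of the constant part. The same argument handles the corresponding direction in (b) with $\mathbb{N}^{|\Sigma|}$ in place of $\mathbb{Z}^{|\Sigma|}$. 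The final implication in (b) is immediate from the identity $\|w\|=\sum_i\|w\|_i$ in $\Sigma^*$, which rephrases any linear length constraint as a linear exponent-sum constraint.

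The substantive work is the reverse reduction $\mc{DP}(F(\Sigma),\{|\cdot|_i\}_i)\leq\mc{DP}(F(\Sigma),\ab)$. The difficulty is that a general linear exponent-sum constraint $\sum_{j,i}c_{j,i}|X_j|_i=d$ mixes coordinates, whereas one abelianisation equation must use the same coefficient in front of each variable across all coordinates. The plan is to \emph{linearise} each such constraint by introducing auxiliary variables that recode every $|X_j|_i$ as the $s_1$-exponent of a new variable. For each $X_j$ and each $k$, introduce $P_{j,k}$ with the equation $[P_{j,k},s_k]=1$ together with the abelianisation equation $\ab(X_j)=\ab(P_{j,1}\cdots P_{j,|\Sigma|})$, which forces $P_{j,k}=s_k^{|X_j|_k}$. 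Then, for each $(j,i)$ with $i\neq 1$, introduce $Q_{j,i}$ and $R_{j,i}$ with $[Q_{j,i},s_1]=1$, $[R_{j,i},s_1 s_i^{-1}]=1$ and $\ab(R_{j,i})=\ab(Q_{j,i}P_{j,i}^{-1})$; the centraliser description above together with this last abelianisation equation force $Q_{j,i}=s_1^{|X_j|_i}$, and we set $Q_{j,1}:=P_{j,1}$. Since all $Q_{j,i}$ commute as powers of $s_1$, the original constraint becomes equivalent to the single abelianisation equation $\ab\bigl(\prod_{j,i}Q_{j,i}^{c_{j,i}}\bigr)=d\,\ab(s_1)$.

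For the monoid version in (b) the same scheme applies after replacing $s_1 s_i^{-1}$ by $s_1 s_k$ (primitive in $\Sigma^*$ for $k\neq 1$) and the relation $\ab(R_{j,i})=\ab(Q_{j,i}P_{j,i}^{-1})$ by $\ab(R_{j,i})=\ab(Q_{j,i}P_{j,k})$; coefficients of opposite signs are moved to opposite sides of the final abelianisation equation, and a suitable power of $s_1$ absorbs the constant $d$ so that both sides lie in the image of $\langle s_1\rangle$. The principal obstacle in the whole proof is the construction of the previous paragraph, where one must design a rigid system of commutator and abelianisation equations pinning every mixed exponent-sum into a single fixed coordinate; once this encoding is in place, collapsing the resulting single-coordinate linear equation into an abelianisation equation is immediate, and the combined reductions yield (a) together with the first equivalence in (b).
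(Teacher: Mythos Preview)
Your argument is correct and shares the paper's key device: centralisers $\langle s_k\rangle$ (PE-defined by $[z,s_k]=1$) combined with the $\ab$ relation let one isolate individual exponent-sums. The paper's proof of the nontrivial direction in (a) is shorter and more abstract: it only PE-defines each binary relation $|x|_i=|y|_i$ in $(F(\Sigma),\ab)$, via $\exists\, w\in\langle s_i\rangle$ and $x',y'\in\prod_{j\ne i}\langle s_j\rangle$ with $\ab(x)=\ab(wx')$ and $\ab(y)=\ab(wy')$, and then invokes Proposition~\ref{RedCor}. You instead carry out the reduction concretely, and add one ingredient the paper does not use in this lemma: the centraliser of $s_1 s_i^{\pm 1}$, which transports each $|X_j|_i$ into the $s_1$-coordinate and so handles mixed-coordinate linear constraints directly (this same transport idea surfaces in the paper only in the next result, Theorem~\ref{thm:free_gp}). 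Your route thus makes explicit a step that the paper's appeal to interpretability leaves to the reader. For (b) the paper simply cites B\"uchi--Senger, whereas you supply the monoid adaptation; note a small index slip there, where $s_1 s_k$ and $P_{j,k}$ should presumably read $s_1 s_i$ and $P_{j,i}$.
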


\begin{proof}

(a) The decidability of $\mc{DP}(F(\Sigma),\{| . |_i\})$ clearly implies the same for $\mc{DP}(F(\Sigma),\ab)$. 

Conversely,  we prove that each $|.|_i$ can be interpreted by a system of equations in ($F(\Sigma),\ab$). Indeed, two elements $x, y$ satisfy $|x|_i=|y|_i$ if and only if there exist elements $x', y' \in \langle s_1 \rangle \cdots \langle s_{i-1} \rangle \cdot  \langle s_{i+1} \rangle \cdots \langle s_n \rangle$ and $w \in \langle s_i \rangle$ such that $\ab(x)=\ab(wx') \ \wedge \ab(y)=\ab(wy')$. Since the centraliser of any generator $s_k$ in a free group is $\langle s_k \rangle$ (infinite cyclic and generated by $s_k$), we can express the above equivalently as:  $|x|_i=|y|_i$ if and only if there exist 
$z_1, \dots, z_{i-1}, z_{i+1}, \dots, z_n, u_1, \dots, u_{i-1}, u_{i+1}, \dots, u_n, w,  x', y'$ such that \begin{align*} &\bigwedge_{j\neq i}[z_j,s_j]=1 \  \wedge \ \bigwedge_{j\neq i}[u_j,s_j]=1 \ \wedge \ [w,s_i]=1 \ \wedge \ \\  &x'= \prod_{j\neq i} z_i \ \wedge \ y'= \prod_{j\neq i} u_i \ \wedge \ab(x)=\ab(wx') \ \wedge \ab(y)=\ab(wy').\end{align*}

Hence $|\cdot|_i$ is PE-interpretable in $(F(\Sigma), \ab)$ for all $i=1, \dots, n$. It follows that $\mc{DP}(F(\Sigma), \{|.|_i\}_i)$ is reducible to $\mc{DP}(F(\Sigma), \ab)$.

(b) For free monoids this is proved in \cite{RichardBuchi1988}. 
\end{proof}

The decidability of $\mc{DP}(\Sigma^*, \Length)$ in Lemma \ref{lemma: DP_implications}(b) is a major open problem in theoretical computer science (see \cite{DayManeaWE}). However, \ref{lemma: DP_implications} does not have any consequences regarding $\mc{DP}(\Sigma^*, \Length)$ since $\mc{DP}(\Sigma^*, \ab)$ and $\mc{DP}(\Sigma^*, \{|| . ||_i\}_i)$ are undecidable.


\begin{theorem}\label{thm:free_gp}
The $\mc{DP}(F(\Sigma), \ab)$ and $\mc{DP}(F(\Sigma),\{| . |_i\}_i)$ are undecidable if $|\Sigma| \geq 2$.
\end{theorem}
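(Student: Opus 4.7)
The plan is to invoke Lemma \ref{lemma: DP_implications}(a), which reduces the theorem to showing that $\mc{DP}(F(\Sigma),\{|\cdot|_i\}_i)$ is undecidable, and then to prove this by reducing Hilbert's 10th problem to it. Since $|\Sigma|\geq 2$, I fix two generators $a,b\in\Sigma$. Addition of integers is captured by concatenation of group elements, since $|\cdot|_a\colon F(\Sigma)\to\mbb{Z}$ is a homomorphism (Lemma \ref{l: morphism}), so if $Y,Z$ are group variables with $|Y|_a=n$ and $|Z|_a=m$ then $|YZ|_a=n+m$. The crux is therefore to encode multiplication by a system of group equations with exponent-sum constraints.

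The gadget to encode ``$k=nm$'' introduces two group variables $U,X$ subject to the single equation $[X,U]=1$ together with the exponent-sum constraints $|U|_a=n$, $|U|_b=1$, $|X|_b=m$, and $|X|_a=k$. The key observation is that in any free group $F$ the centralizer of a non-trivial element $g$ is infinite cyclic, generated by the unique root $g_0$ of $g$; and if $|g|_b=1$ then $g$ cannot be a proper power (the $b$-exponent of any proper power is divisible by the exponent), so $C_F(g)=\langle g\rangle$. Applied to $U$, this forces $X=U^s$ for some $s\in\mbb{Z}$; the constraint $|X|_b=m$ together with $|U|_b=1$ then yields $s=m$, hence $|X|_a=m\cdot|U|_a=mn$, which is forced to equal $k$. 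Conversely, whenever $k=nm$ the gadget is realised by $U=a^nb$, $X=(a^nb)^m$.

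Given an arbitrary integer polynomial equation $P(n_1,\dots,n_r)=0$, I decompose $P$ into a sequence of additions and multiplications, represent each integer variable $n_j$ as the exponent sum $|Y_j|_a$ of an auxiliary group variable $Y_j$, and assemble copies of the gadget above (with fresh auxiliary variables) for each arithmetic operation. The resulting system of group equations with exponent-sum constraints over $F(\Sigma)$ is satisfiable if and only if $P$ has an integer solution, so the undecidability of Hilbert's 10th problem transfers to $\mc{DP}(F(\Sigma),\{|\cdot|_i\}_i)$, and thence to $\mc{DP}(F(\Sigma),\ab)$ by Lemma \ref{lemma: DP_implications}(a). The delicate point is the multiplication gadget itself; once the centralizer description is invoked, everything else is a routine composition that goes through uniformly for any $|\Sigma|\geq 2$ (the centralizer of a primitive element is cyclic in free groups of every rank) and for arbitrary sign choices of the integers involved.
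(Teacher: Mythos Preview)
Your proof is correct and follows the same overall strategy as the paper: reduce to $\mc{DP}(F(\Sigma),\{|\cdot|_i\}_i)$ via Lemma~\ref{lemma: DP_implications}(a), then encode integer multiplication using the fact that centralisers in free groups are cyclic.

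The multiplication gadget differs in a minor but interesting way. The paper uses \emph{two} commutator equations: first $[b,s_2]=1$ pins $b$ to a specific power $s_2^{t_1}$, and then $[c,s_1b]=1$ forces $c\in\langle s_1 s_2^{t_1}\rangle$, where $s_1 s_2^{t_1}$ is visibly primitive. You use a \emph{single} commutator equation $[X,U]=1$ and instead impose the exponent-sum constraint $|U|_b=1$ to guarantee that $U$ is primitive (hence $C_F(U)=\langle U\rangle$), letting $U$ range over all such elements rather than fixing it. Your version is a touch more economical for free groups and correctly handles arbitrary signs and rank $|\Sigma|\ge 2$; the paper's version, by contrast, is packaged as a PE-interpretation of the ring $(\mbb{Z},\oplus,\odot)$ and is deliberately shaped to foreshadow the general technical Lemma~\ref{l: technical_lemma}, where the set $\{(s_1 s_2^{t_1})^{t_2}\mid t_2\in\mbb{Z}\}$ reappears in the RAAG and hyperbolic settings. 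So both arguments are sound; yours is a direct reduction, the paper's is the base case of a template.
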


\begin{proof} 
As before, let $\Sigma=\{s_i \mid 1 \leq i \leq m\}$ be the set of generators of $F(\Sigma)$.

 By Lemma \ref{lemma: DP_implications}, it suffices to prove that $\mc{DP}(F(\Sigma), \{|.|_i\}_i)$ is undecidable. To prove this we interpret the ring $(\mbb{Z}, \oplus, \odot)\cong (\{s^t \mid t\in \mbb{Z}  \}, \oplus, \odot)$ into $(F(\Sigma), \{|.|_i\}_i)$, where $s=s_1\in \Sigma$. We define $s^{t_1} \oplus s^{t_2} =_{def} s^{t_1 + t_2}$ and $s^{t_1} \odot s^{t_2} =_{def} s^{t_1  t_2}$. We need to establish the sets, maps and conditions of Defintion \ref{d: e-int}: to start with, the interpreting map $\phi$ is the natural bijection between $D=\{s^t \mid t\in \mbb{Z} \}$ and $\mbb{Z}$, and $n=1$.

The set $D$ is defined by the equation $[x, s]=1$, the operation $\oplus$ coincides with the group multiplication in $F(\Sigma)$, and the equality relation on $D$  is also trivially PE-definable.  Hence the abelian group $(\{s^t \mid t\in \mbb{Z}\}, \oplus)$ is PE-interpretable in $F(\Sigma)$. 

We now PE-interpret $\odot$, that is, we show that the set $\{(s^{t_1}, s^{t_2}, s^{t_3}) \mid t_1t_2=t_3, t_i \in \mbb{Z}\}$ is PE-definable in $(F(\Sigma), \{|.|_i\}_i)$. Write $a_i=s^{t_i}(=s_1^{t_i})$ for $t_i \in \mbb{Z}$ ($i=1,2,3$). We claim that $a_1, a_2, a_3 \in D$ satisfy $a_1\odot a_2 = a_3$ if and only if there exist $b, c\in F(\Sigma)$ such that \begin{equation}\label{def_multiplication}[b, s_2]= 1 \wedge [c, s_1b]=1 \wedge |a_1|_{s_1} = |b|_{s_2} \wedge |a_2|_{s_1}=|c|_{s_1} \wedge |a_3|_{s_1}= |c|_{s_2}.\end{equation}
Once the claim is proved, we obtain by definition that (the graph of) $\odot$ is PE-definable in  $(F(\Sigma), \{|.|_i\}_i)$. 

We prove first the converse of the claim. Let $b,c$ be elements satisfying the above equations with exponent-sum constraints. Then $b= s_2^{t_1}$ and $c= (s_1s_2^{t_1})^{t_2}$, and the last equality implies that $t_3 = |(s_1s_2^{t_1})^{t_2}|_{s_2} = t_1t_2$.  Hence $a_1\odot a_2=a_3$.

For the direct implication, suppose   $a_1\odot a_2=a_3$, so $t_3=t_1t_2$, and take $b = s_2^{t_1}, c= (s_1 b)^{t_2}$. Then $b,c$ satisfy \eqref{def_multiplication}.
\end{proof}

\begin{remark}\label{rem:free_gp} We make two observations concerning the proof of Lemma 
\ref{lemma: DP_implications} that will be relevant later, especially for what we call the Technical Lemma \ref{l: technical_lemma}.

First, note that the interpretability of integer multiplication $\odot$ in the proof above is possible because the set $\{(s_1s_2^{t_1})^{t_2} \mid t_1, t_2 \in\mbb{Z}\}$ is PE-definable in the free group $F(\Sigma)$. 

Second, in the proof above we  interpreted the ring $(\mbb{Z}, \oplus, \odot)$ in $(F(\Sigma), \{|.|_i\}_i)$. However, in the proof of Lemma 
\ref{lemma: DP_implications} we show that $\{|.|_i\}_i$ is PE-interpretable in $(F(\Sigma), ab)$, so we can also interpret the ring $(\mbb{Z}, \oplus, \odot)$ in $(F(\Sigma), ab)$. \end{remark}

\section{Groups with finite abelianisation}\label{sec:fin_ab}
Let $G^{\ab}=G/G'$ be the abelianisation of $G$. In this section we show that solving the $\mc{DP}(G,\ab)$ can be reduced, when $G^{\ab}$ is finite, to solving the Diophantine Problem in $G$ with recognisable constraints (see Definition \ref{def:rat_reg_con}). We then apply this to important classes of groups where $\mc{DP}$ with recognisable constraints is known to be decidable, such as hyperbolic groups with finite abelianisation and graph products of finite groups.


\begin{remark}\label{rmk:recognisable}
By \cite[Proposition 6.3]{herbst_thomas} a subset of a group
$G$ is recognisable if and only if it is a union of cosets of a
subgroup of finite index in $G$, and hence a union of cosets of a normal
subgroup of finite index (the core of a finite index subgroup will be both
normal in $G$ and of finite index in $G$); recognisability 
 is independent of the choice of generating set for $G$.
\end{remark}


\begin{proposition}\label{thm:finiteab}
Let $G$ be a group with finite abelianisation. Then $\mc{DP}(G, \ab)$ is reducible to the Diophantine Problem in $G$ with recognisable constraints.
\end{proposition}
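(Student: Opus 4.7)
The plan is to exploit the fact that when $G^{\ab}$ is finite, each variable has only finitely many possible images under the abelianisation map $\pi : G \to G^{\ab}$, so we can guess these images and then enforce them via recognisable constraints.

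First I would fix an instance of $\mc{DP}(G,\ab)$, namely a finite system of equations $\mc E$ on variables $\mathbf{x}=\{X_1,\dots,X_k\}$ together with a finite system $\Theta$ of equations in $G^{\ab}$ on these same variables. The key observation is that $G'$ has finite index in $G$ (since $G/G'$ is finite), so every coset $gG'$ is a union of cosets of the finite-index normal subgroup $G'$ and is therefore recognisable by Remark \ref{rmk:recognisable}. In particular, for each $\bar{g}\in G^{\ab}$, the preimage $\pi^{-1}(\bar g) = gG'$ (for any representative $g$) is a recognisable subset of $G$, and one can effectively produce a finite-state automaton recognising it, for instance by taking the Cayley graph of the finite group $G^{\ab}$ with accept state $\bar g$.

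The reduction proceeds as follows. Since $G^{\ab}$ is finite, there are only finitely many tuples $(\bar a_1,\dots,\bar a_k) \in (G^{\ab})^k$. For each such tuple I would do two things. First, check directly in the finite group $G^{\ab}$ whether $(\bar a_1,\dots,\bar a_k)$ is a solution to the system $\Theta$; this is a finite computation. Second, if the tuple satisfies $\Theta$, form a new instance of the $\mc{DP}(G)$ with recognisable constraints consisting of the system $\mc E$ together with the constraints $X_i \in \bar a_i G'$ for $i=1,\dots,k$, and query the assumed oracle. I would output ``yes'' if and only if at least one such tuple yields a positive answer from the oracle.

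Correctness is essentially bookkeeping: a tuple $(x_1,\dots,x_k)\in G^k$ is a solution to $(\mc E,\Theta)$ if and only if it is a solution to $\mc E$ in $G$ and $(\pi(x_1),\dots,\pi(x_k))$ satisfies $\Theta$ in $G^{\ab}$, which in turn holds iff there is some tuple $(\bar a_1,\dots,\bar a_k)$ satisfying $\Theta$ with $x_i\in\bar a_i G'$ for every $i$. Since only finitely many tuples $(\bar a_1, \dots, \bar a_k)$ need to be examined and each single query is by hypothesis decidable, the whole procedure terminates. There is no genuine obstacle here — the only small point requiring care is to verify that the recognisable constraints $\bar a_iG'$ can be produced effectively from the input, which follows because $G^{\ab}$ is finite and explicitly given (it can be computed from any finite presentation of $G$, or simply fixed in advance as part of the data of $G$).
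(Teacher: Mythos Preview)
Your proof is correct and rests on the same key observation as the paper: since $G^{\ab}$ is finite, $G'$ has finite index, so every coset of $G'$ is recognisable, and abelian constraints can therefore be replaced by recognisable ones. The mechanics differ slightly. You enumerate all tuples $(\bar a_1,\dots,\bar a_k)\in (G^{\ab})^k$, discard those failing $\Theta$, and for each surviving tuple pose one instance of $\mc{DP}$ with the constraints $X_i\in \bar a_i G'$; this is a Turing reduction with finitely many oracle calls. The paper instead produces a single instance: for each abelian equation, written as $W(\mathbf{x})=\alpha$ with $W$ constant-free, it introduces a fresh variable $Z$, adds the group equation $Z=W(\mathbf{x})$ to $\mc E$, and imposes the recognisable constraint $Z\in \alpha G'$. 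Your case split is arguably more transparent and avoids auxiliary variables, while the paper's version avoids the disjunction and yields a many-one reduction; for establishing decidability either works.
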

\begin{proof}  Let $\ab:G\to G^{\ab}$ be the natural abelian projection
to $G^{\ab}=G/G'$, which is finite; this implies that $G'$ has finite index, and so any coset of $G'$ is recognisable by Remark \ref{rmk:recognisable}. Let $\mathcal E$ be a system of equations on variables $\mathbf{x}=\{X_1, \dots, X_k\}$ in $G$ with abelian constraints $\mathcal A$ consisting of a system of equations over $\mathbf{x}$ in $G^{\ab}$. 

We claim that any equation in $\mathcal A$ can be seen as a recognisable constraint imposed on $\mathcal E$.
Because of the commutativity of $G^{\ab}$, any equation in $\mathcal A$ can be rewritten as $W(\mathbf{x})=\alpha$, where $W(\mathbf{x})$ is a constant-free word on the variables $\mathbf{x}$ and $\alpha \in G^{\ab}$.
We introduce a new variable $Z$ for each such $W(\mathbf{x})$, and set $Z=\alpha$ (which implies $W(\mathbf{x})=Z$), so that we can more easily apply constraints to $Z$ instead of $W(\mathbf{x})$ below; let $\mathbf{x}'$ be the expanded set of variables for $\mathcal E'$, which includes the new $Z$ variables, and consists of $\mathcal E$ together with the newly introduced equations $Z=\alpha$. Then $W(\mathbf{x})=\alpha$ holds in $G^{\ab}$ if and only if $Z\in \ab^{-1}(\alpha)$. This is equivalent to $Z\in \alpha G'$, that is, $Z$ belongs to a coset of the commutator subgroup $G'$. Since any coset of $G'$ is recognisable, all equations in $\mathcal A$ can be seen as recognisable constrains imposed on the set of variables $\mathbf{x}'$ (more precisely, 
on the variables in $\mathbf{x}'$ that do not belong to $\mathbf{x}$).
%
\end{proof}

Hyperbolic groups have been studied extensively in geometric group theory, and the Diophantine Problem is known to be decidable by work of Rips \& Sela \cite{RS95} in the torsion-free case and Dahmani \& Guirardel \cite{DG} in the general case. We do not give the definitions and background on hyperbolic groups here, but refer the reader to \cite{MSRInotes}.

\begin{theorem}\label{thm:hyp_finiteab}
Let $G$ be a hyperbolic group with finite abelianisation. Then $\mc{DP}(G, \ab)$ is decidable.
\end{theorem}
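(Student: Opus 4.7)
The plan is to chain together two results: Proposition \ref{thm:finiteab}, which reduces $\mc{DP}(G,\ab)$ to the Diophantine Problem with recognisable constraints whenever the abelianisation is finite, and the known fact that hyperbolic groups have decidable Diophantine Problem with rational constraints (the relevant references are \cite{DahmaniIJM09, DG, dgh01}, and recognisable sets are a particular case of rational sets). So the proof is essentially a one-line reduction followed by the invocation of an established theorem.

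First, I would apply Proposition \ref{thm:finiteab}: since $G$ is hyperbolic with finite abelianisation, any instance of $\mc{DP}(G,\ab)$ can be effectively rewritten as an instance of the Diophantine Problem for $G$ with recognisable constraints on (some of) its variables. Concretely, each equation $W(\mathbf{x}) = \alpha$ in the abelianisation becomes, after introducing an auxiliary variable $Z$ with $Z = W(\mathbf{x})$ and $Z \in \alpha G'$, a membership condition requiring $Z$ to lie in a coset of the finite-index normal subgroup $G' \trianglelefteq G$. By Remark \ref{rmk:recognisable}, such cosets are recognisable, and thus in particular rational.

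Next I would invoke the theorem of Dahmani \cite{DahmaniIJM09} (for the torsion-free case) and Dahmani--Guirardel \cite{DG} (in full generality), which states that the Diophantine Problem in a hyperbolic group with rational constraints is decidable. Applying this to the instance produced in the previous step yields a decision procedure for the original $\mc{DP}(G,\ab)$ instance. This completes the argument.

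The only subtle point to check is that the reduction provided by Proposition \ref{thm:finiteab} is effective, that is, that given a description of $G$ (as a finitely presented hyperbolic group) one can effectively describe the cosets of $G'$ as recognisable sets. This is routine since $G^{\ab} = G/G'$ is a finitely generated abelian group whose structure is computable from a presentation, and membership in a coset $\alpha G'$ is decidable. There is no genuine obstacle here; the content of the theorem lies entirely in the two ingredients cited, and the proof is essentially a two-line chain of reductions.
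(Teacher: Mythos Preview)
Your overall strategy matches the paper's: apply Proposition \ref{thm:finiteab} to reduce to the Diophantine Problem with recognisable constraints, then invoke the decidability result for hyperbolic groups. However, you have overstated what \cite{DG} actually proves. Dahmani--Guirardel do \emph{not} establish decidability of the Diophantine Problem with arbitrary rational constraints in hyperbolic groups; their result is for \emph{quasi-isometrically embeddable} rational constraints (see Definition \ref{def:rat_con}). So ``recognisable $\Rightarrow$ rational $\Rightarrow$ done'' is not a valid chain here.

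The paper closes this gap with a short additional argument: given a recognisable set $L\subseteq G$, the full preimage $\pi^{-1}(L)\subseteq S^*$ is regular, and intersecting it with the (regular) language $QGeo(G,\mu,\lambda)$ of $(\lambda,\mu)$-quasi-geodesics yields a regular set $P$ that is q.i.\ embedded and still satisfies $\pi(P)=L$. Hence every recognisable set is q.i.\ embeddable rational, and \cite{DG} applies. You should insert this step; once you do, your proof coincides with the paper's.
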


For the following proof we will need to recall an important concept from \cite{DG}.

\begin{definition}\label{def:rat_con} Let $S$ be a finite symmetric generating set for $G$.
\ \begin{enumerate}
\item[(1)] A regular subset $L'$ of $S^*$ is
\emph{quasi-isometrically embedded} (q.i. embedded) in $G$ if there exist
$\lambda \geq 1$ and $\mu \geq 0$ such that, for any $w \in L'$,
$|\pi(w)|_G \geq \frac{1}{\lambda}|w|-\mu.$
\item[(2)] A rational subset $L$ of $G$ is \emph{quasi-isometrically
embeddable} (q.i. embeddable) in $G$ if there exists a quasi-isometrically embedded
regular subset  $L'$ of $S^*$ such that $\pi(L')=L.$
\end{enumerate}
\end{definition}

\begin{proof} (of Theorem \ref{thm:hyp_finiteab} )
The Diophantine Problem with quasi-isometrically embeddable rational constraints is decidable in hyperbolic groups by \cite{DG}. 

We will show that a recognisable set $L\subset G$ can be made to quasi-isometrically embed: let $QGeo(G,\mu,\lambda)$ be the set of $(\lambda,\mu)$-quasi-geodedics over $S$, which is known to be regular, and consider the set $P=\pi^{-1}(L)\cap QGeo(G,\mu,\lambda)$. Then $P$ is a regular set, as the intersection of two regular sets, and $\pi(P)=L$.
Since the abelianisation $G^{\ab}$ of $G$ is finite, the theorem follows from Proposition \ref{thm:finiteab} and \cite{DG}.
\end{proof}

%
%
%
Graph products of groups are another widely studied class of groups, and the graph product construction generalises both direct and free products. As in the case of RAAGs, we start with a finite undirected graph $\Gamma$ with no loops at any vertex and no multiple edges between two vertices. We associate a group to each vertex of $\Gamma$, and let the graph product $\mc{G}\Gamma$ be the group whose generator set is the union of the vertex group generators, and the relations consist of the relations in the vertex groups together with commuting relations between the generators of any vertex groups connected by an edge in $\Gamma$. RAAGs are simply graph products where each vertex group is equal to $\mbb{Z}$.

\begin{example}\label{ex:graph_product}
Let $\Gamma$ be the graph below with vertex groups $G_1, G_2, G_3, G_4$, where $G_i$ is the cyclic group of order $2i$ generated by $x_i$. The graph product $\mc{G}\Gamma$ based on $\Gamma$ has the presentation $\langle x_1,x_2,x_3,x_4 \mid [x_1,x_2]=[x_2,x_3]=[x_3,x_4]=1, x_1^2=x_2^4=x_3^6=x_4^8=1 \rangle$; the abelianisation of $\mc{G}\Gamma$ is the direct product $C_2 \times C_4 \times C_6 \times C_8$ of finite cyclic groups of orders $2, 4, 6, 8$, respectively.
 \[
  \xymatrix{\stackrel{G_1}{\bullet}\ar@{-}[r] &\stackrel{G_2}{\bullet} \ar@{-}[r]&\stackrel{G_3}{\bullet}\ar@{-}[r] &\stackrel{G_4}{\bullet} }.
 \]

 \end{example}

\begin{theorem}\label{thm:graph_prod}
Let $G$ be a graph product of finite groups. Then $\mc{DP}(G, \ab)$ is decidable.
\end{theorem}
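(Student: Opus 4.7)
The plan is to combine Proposition \ref{thm:finiteab} with a standing decidability result for equations with rational constraints in graph products of finite groups.

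First I would verify that the abelianisation of any graph product $\mc{G}\Gamma$ of finite groups $G_1,\dots,G_n$ is itself finite. Abelianising makes all the edge-induced commutation relations redundant, so the natural map induces an isomorphism
\[
(\mc{G}\Gamma)^{\ab}\;\cong\; G_1^{\ab}\times\cdots\times G_n^{\ab}.
\]
Each $G_i^{\ab}$ is finite because $G_i$ is, so the product is finite, as required.

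Having confirmed finiteness of the abelianisation, Proposition \ref{thm:finiteab} reduces $\mc{DP}(\mc{G}\Gamma,\ab)$ to the Diophantine Problem in $\mc{G}\Gamma$ with recognisable constraints. Recognisable subsets are a particular case of rational subsets by Definition \ref{def:rat_reg_con} and the remark that immediately follows it, so it is enough to cite the result of Diekert and coauthors stating that equations with rational constraints are decidable in graph products of finite groups. Combining the reduction with the cited result yields the theorem.

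The main obstacle is pinning down the exact reference for decidability of equations with rational constraints in graph products of arbitrary finite vertex groups. If the cleanest off-the-shelf statement only covers right-angled Coxeter groups or free products of finite groups, one would justify the general case by observing that each vertex group is finite (hence its existential theory with rational constraints is trivially decidable) and invoking the standard amalgamation/normal-form techniques developed for graph products, which reduce satisfiability with rational constraints in the graph product to the analogous problems in the vertex groups.
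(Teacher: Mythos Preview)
Your proposal is correct and follows essentially the same route as the paper: observe that the abelianisation of a graph product of finite groups is finite, apply Proposition~\ref{thm:finiteab} to reduce to the Diophantine Problem with recognisable constraints, and invoke the Diekert--Lohrey result for graph products of finite groups. The detour through rational constraints and the speculative final paragraph are unnecessary, since the cited work already handles recognisable constraints directly, but they do no harm.
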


\begin{proof}

Equations with recognisable constraints in graph products of finite groups are decidable by Diekert and Lohrey's work \cite{DLijac}, and since a graph product of finite groups has finite abelianisation, the theorem follows from Proposition \ref{thm:finiteab}.
\end{proof}

The following is a consequence of Theorem \ref{thm:graph_prod}, since right-angled Coxeter groups are graph products where each vertex group is equal to the cyclic group of order two $C_2$.

\begin{cor}\label{thm:RACG}
Let $G$ be a right-angled Coxeter group. Then $\mc{DP}(G, \ab)$ is decidable.
\end{cor}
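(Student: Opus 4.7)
The plan is essentially to invoke Theorem \ref{thm:graph_prod} directly. A right-angled Coxeter group $G$ based on a finite simplicial graph $\Gamma$ admits the presentation $\langle v\in V(\Gamma) \mid v^2 = 1 \text{ for all } v\in V(\Gamma),\ [u,v]=1 \text{ for all edges } (u,v)\in E(\Gamma) \rangle$. This is by definition the graph product $\mathcal{G}\Gamma$ in which every vertex group is the cyclic group $C_2$ of order two, so it is in particular a graph product of finite groups.

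First I would make this identification explicit: each vertex generator $v$ together with the relation $v^2=1$ contributes a copy of $C_2$ as a vertex group, and the only additional relations in the Coxeter presentation are the commuting relations encoded by edges of $\Gamma$, which match precisely the edge relations in the graph product construction. With this observation, $G$ satisfies the hypothesis of Theorem \ref{thm:graph_prod}.

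Applying Theorem \ref{thm:graph_prod} then yields decidability of $\mc{DP}(G,\ab)$. Since the identification of a right-angled Coxeter group as a graph product of copies of $C_2$ is immediate from the definitions, there is no real obstacle; the only thing worth emphasising is that the abelianisation $G^{\ab}$ is $C_2^{|V(\Gamma)|}$, which is finite, so Proposition \ref{thm:finiteab} (used inside the proof of Theorem \ref{thm:graph_prod}) applies and the decidability passes through via the reduction to equations with recognisable constraints, which Diekert--Lohrey \cite{DLijac} solved for graph products of finite groups.
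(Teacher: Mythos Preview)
Your proposal is correct and takes essentially the same approach as the paper: observe that a right-angled Coxeter group is a graph product with all vertex groups equal to $C_2$, and then apply Theorem \ref{thm:graph_prod}. The additional details you give (the explicit presentation, the computation of $G^{\ab}$, and the reference to Proposition \ref{thm:finiteab} and \cite{DLijac}) are accurate but not needed, since they are already encapsulated in Theorem \ref{thm:graph_prod}.
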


\section{Interpreting the ring $\mathbb{Z}$ via the Diophantine Problem with abelian constraints}\label{sec:technical}

 In this section we present a general technical result (Lemma \ref{l: technical_lemma}) that will allow us to obtain undecidability of the $\mc{DP}(G,\ab)$ for all non-abelian RAAGs and hyperbolic groups with `large' abelianisation, that is, $b_1(G)\geq 2$. 

\begin{definition}\label{d: R1_R2}
		Let $G$ be a group generated by a finite set $S$.
		\begin{enumerate}
		
		\item[(1)] If $X\subseteq S$ is a subset of \ap \ elements, let $R_X=R_X(\cdot,\cdot)$ be the binary relation defined as: $g, h\in G$ satisfy $R_X$ if and only if $|g|_x=|h|_x$ for all $x\in X$.
	
	We will simply write $R_s$ if $X=\{s\}$ consists of a single element.
	\item[(2)] For any distinct \ap \ elements $s,t \in S$ let $R_{s,t}=R_{s,t}(\cdot,\cdot)$ be the binary relation defined as: $g,h\in G$ satisfy $R_{s,t}$ if and only if $|g|_s = |h|_t$. 
	\end{enumerate}
	
\end{definition}

Recall that an $n$-ary relation $R$ in $G$ is \emph{PE-definable in $G$} if the set $$\{(g_1, \dots, g_n)\in G^n \mid (g_1, \dots, g_n) \text{ satisfies } R\}$$ is PE-definable in $G$.

We begin with the following basic observation.

\begin{lemma}\label{lem:Ks_def}
    Let $G$ be a group generated by a set $S$, let $\{s_1, \dots, s_t\}\subseteq S$ be a set of \ap \ generators, and let $$K_{s_1, \dots, s_t}:=\{g \in G \mid |g|_{s_i}=0  \ \forall i=1,\dots, t\}.$$ Then $K_{s_1, \dots, s_t}$ is a normal subgroup of $G$, and for each $g\in G$ there exists $k_g\in K_{s_1, \dots, s_t}$ such that
 $g = k_g \Pi_{i=1}^t s_i^{|g|_{s_i}}.$
\end{lemma}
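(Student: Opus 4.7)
The plan is to invoke Lemma \ref{l: morphism}, which tells us that each $|\cdot|_{s_i}:G\to\mbb{Z}$ is a group homomorphism. Then $K_{s_1,\dots,s_t}$ is by definition the intersection $\bigcap_{i=1}^{t}\ker(|\cdot|_{s_i})$, so as an intersection of normal subgroups it is itself normal in $G$. This handles the first assertion with essentially no work.

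For the decomposition, I would set
$$k_g:=g\cdot\Bigl(\prod_{i=1}^{t}s_i^{|g|_{s_i}}\Bigr)^{-1},$$
so that $g=k_g\prod_{i=1}^{t}s_i^{|g|_{s_i}}$ holds by construction. The remaining task is to verify that $k_g\in K_{s_1,\dots,s_t}$, i.e.\ that $|k_g|_{s_j}=0$ for every $j$. Applying Lemma \ref{l: morphism} termwise gives
$$|k_g|_{s_j}\;=\;|g|_{s_j}\;-\;\sum_{i=1}^{t}|g|_{s_i}\cdot|s_i|_{s_j},$$
so the whole argument reduces to the orthogonality relation $|s_i|_{s_j}=\delta_{ij}$ among the abelian-primitive generators in $S$.

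This orthogonality is the one step I expect to require genuine care. For $i\neq j$, the one-letter word $s_i\in F(S)$ represents $s_i$ in $G$ and has word-theoretic exponent-sum $0$ with respect to $s_j$; since $\abs(\cdot)\geq 0$, this value is automatically minimal, so $|s_i|_{s_j}=0$ by the definition of the exponent-sum. For $i=j$, the word $s_j$ yields $|s_j|_{s_j}=1$, and this is the minimum: any word $w$ representing $s_j$ with $|w|_{s_j}=0$ would force, via Lemma \ref{l: equivalences}, the containment $\ab(s_j)\in H_j\cap\langle\ab(s_j)\rangle=\{0\}$, contradicting the fact that $\ab(s_j)$ has infinite order because $s_j$ is \ap. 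Substituting $|s_i|_{s_j}=\delta_{ij}$ into the display above yields $|k_g|_{s_j}=|g|_{s_j}-|g|_{s_j}=0$, as desired.
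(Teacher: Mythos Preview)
Your proof is correct. The normality argument is identical to the paper's: $K_{s_1,\dots,s_t}$ is the intersection of the kernels of the homomorphisms $|\cdot|_{s_i}$ from Lemma~\ref{l: morphism}.

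For the decomposition, the paper takes a slightly different and terser route. Rather than defining $k_g$ explicitly and verifying the orthogonality $|s_i|_{s_j}=\delta_{ij}$, it observes that $G'\leq K_{s_1,\dots,s_t}$ (each $|\cdot|_{s_i}$ lands in the abelian group $\mbb{Z}$, so commutators die), hence $G/K_{s_1,\dots,s_t}$ is abelian and the letters $s_i$ can be shuffled freely modulo $K_{s_1,\dots,s_t}$. Your direct computation has the advantage of making explicit which property of the $s_i$ is really being used; the paper's argument is shorter but implicitly relies on the same orthogonality to identify the exponents as $|g|_{s_i}$ and to absorb the remaining generators into $K_{s_1,\dots,s_t}$. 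Both amount to the same underlying fact.
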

\begin{proof} The normality of $K_{s_1, \dots, s_t}$ follows from the fact that $|\cdot|_{s_i}$ is a homomorphism (Lemma \ref{l: morphism}) with kernel $K_{s_i}$, and $K_{s_1, \dots, s_t}=\bigcap_{i=1 \dots t} K_{s_i}$. The last part of the lemma follows once we note that  $G' \leq K_{s_1, \dots, s_t}$, which implies $G/K_{s_1, \dots, s_t}$ is abelian; thus, modulo $K_{s_1, \dots, s_t}$, the $s_i$ letters can be shuffled and placed in any order required, and so $g$ can be written as $k_g \Pi_{i=1}^t s_i^{|g|_{s_i}}$, where $k_g\in K_{s_1, \dots, s_t}$.
%
\end{proof}

\newcommand{\Phii}[1]{|{#1}|_{s_1}}

The following lemma is an abstraction of the arguments used in the proof of Theorem \ref{thm:free_gp} about the undecidability of $\mc{DP}(F(S), \ab)$, and its complicated form is required for the proofs involving RAAGs. In the case of hyperbolic groups the conjugating elements $h_g$ from Lemma \ref{l: technical_lemma} (\ref{def_set}) do not occur, and for free groups this has an even simpler form. Remark \ref{rem:free_gp} suggests how the free group is a special case of the lemma below: Item (1) is satisfied because all $|\cdot|_i$ are PE-interpretable in $(F(\Sigma), ab)$, so the relations $R_{s_1}$, $R_{s_2}$, and $R_{s_1, s_2}$ are PE-definable in $(F(\Sigma), ab)$; for Item (2) we can take $h_g$ to be trivial, and we do not need to include $K_{s_1,s_2}$ in the set described by (\ref{def_set}). 

\begin{lemma}[General technical lemma]\label{l: technical_lemma}
	 Let $G$ be a group generated by a set $S$. Let $s_1, s_2 \in S$ be \ap \ elements with $s_1\neq s_2$. 	 Suppose that a group $G$ satisfies (1) and (2) (which is equivalent to (2')). Then the problem $\mc{DP}(G, \ab)$ is undecidable.
\begin{itemize}	 
	\item[(1)] The relations $R_{s_1}$, $R_{s_2}$, and $R_{s_1, s_2}$ are PE-definable in $(G, \ab)$.

	 \item[(2)] There exists a disjunction $\bigvee_{i=1}^{m}\Sigma_i(z, x, y_1, \dots, y_n)$ of systems $\Sigma_i$ of equations 
	 on variables $z,x,y_1, \dots, y_n$, such that for each $g\in K_{s_1}$ there exists $h_g\in G$ satisfying:
	  the disjunction of systems $ \bigvee_{i=1}^{m}\Sigma_i(g, x, y_1, \dots, y_n)$ PE-defines  in $(G, \ab)$ the set 
	 \begin{equation} \label{def_set}
	 h_g^{-1}\{(s_1  s_2^{|g|_{s_2}})^t\mid t\in \mbb{Z}\} K_{s_1, s_2} h_g \subseteq G.
	 \end{equation}
	 \item[(2')]  By definition, (2) is equivalent to: a tuple $(x', y_1', \dots, y_n')$ is a solution to some of the systems of equations $\Sigma_i(g, x, y_1, \dots, y_n)$ ($i=1,\dots, m$) if and only if $$x'\in h_g^{-1}\{(s_1  s_2^{|g|_{s_2}})^t\mid t\in \mbb{Z}\} K_{s_1, s_2} h_g.$$  

	 
\end{itemize}

		
	\end{lemma}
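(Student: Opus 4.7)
The plan is to adapt the strategy of Theorem~\ref{thm:free_gp} and PE-interpret the ring $(\mbb{Z},+,\cdot)$ into $(G,\ab)$ with interpreting map $\phi:G\twoheadrightarrow\mbb{Z}$ given by $\phi(g)=|g|_{s_1}$. Since $|\cdot|_{s_1}$ is a homomorphism (Lemma~\ref{l: morphism}), the domain $D=G$ is trivially PE-definable; the equality preimage $\phi(g)=\phi(h)$ is exactly $R_{s_1}(g,h)$; and the addition preimage $\phi(g)+\phi(h)=\phi(k)$ rewrites as $R_{s_1}(gh,k)$. All three are PE-definable by hypothesis~(1), so the abelian group $(\mbb{Z},+)$ is PE-interpretable in $(G,\ab)$, and undecidability via Proposition~\ref{prop:DPred} and Hilbert's tenth problem will follow as soon as multiplication is also PE-interpreted.

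For multiplication, the goal is to PE-define the graph $\{(a_1,a_2,a_3)\mid |a_1|_{s_1}|a_2|_{s_1}=|a_3|_{s_1}\}$ in $(G,\ab)$, mimicking the free-group recipe from Theorem~\ref{thm:free_gp}. I propose the formula
\[
\exists b,c,\mathbf{y}:\ R_{s_1}(b,1)\wedge R_{s_1,s_2}(a_1,b)\wedge\bigvee_{i=1}^{m}\Sigma_i(b,c,\mathbf{y})\wedge R_{s_1}(c,a_2)\wedge R_{s_2,s_1}(c,a_3),
\]
where the $\Sigma_i$ are those given by hypothesis~(2). The first two conjuncts force $b\in K_{s_1}$ with $|b|_{s_2}=|a_1|_{s_1}=:t_1$. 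Hypothesis~(2), applied with $z=b$, then constrains $c$ to lie in $h_b^{-1}\{(s_1 s_2^{t_1})^{t}\mid t\in\mbb{Z}\}K_{s_1,s_2}h_b$, so that $c=h_b^{-1}(s_1 s_2^{t_1})^{t}\,k\,h_b$ for some $t\in\mbb{Z}$ and $k\in K_{s_1,s_2}$. Using that $|\cdot|_{s_1}$ and $|\cdot|_{s_2}$ are homomorphisms that vanish on $K_{s_1,s_2}$ and on any pair $h_b^{-1}(\cdot)h_b$, one computes $|c|_{s_1}=t$ and $|c|_{s_2}=t\cdot t_1$; the last two conjuncts then force $t=|a_2|_{s_1}=:t_2$ and $t\cdot t_1=|a_3|_{s_1}=:t_3$, yielding $t_1 t_2=t_3$. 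Conversely, given $t_1 t_2=t_3$, the witnesses $b:=s_2^{t_1}$ and $c:=h_b^{-1}(s_1 s_2^{t_1})^{t_2}h_b$ verify the formula. The outer conjuncts are PE-definable by~(1) and the middle disjunction by~(2), so the multiplication graph is PE-definable in $(G,\ab)$, completing the PE-interpretation.

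The main technical obstacle is the exponent-sum bookkeeping that validates $|c|_{s_1}=t$ and $|c|_{s_2}=t\cdot t_1$ for elements $c$ in the set specified by~(2). This rests on three facts that must be assembled carefully: the homomorphism property of $|\cdot|_{s_i}$ (forcing the $h_b^{-1}(\cdot)h_b$ contribution to cancel), the vanishing of $|\cdot|_{s_1}$ and $|\cdot|_{s_2}$ on $K_{s_1,s_2}$, and the identities $|s_1|_{s_2}=|s_2|_{s_1}=0$, so that $|s_1 s_2^{t_1}|_{s_1}=1$ and $|s_1 s_2^{t_1}|_{s_2}=t_1$. The last of these is an implicit constraint on the choice of $s_1,s_2$, normally guaranteed when $\ab(s_1),\ab(s_2)$ span a $\mbb{Z}^2$-summand of $G^{\ab}$, and this is the condition that will need to be verified in each downstream application (RAAGs and hyperbolic groups with $b_1(G)\geq 2$).
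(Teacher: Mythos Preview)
Your proposal is correct and follows essentially the same approach as the paper's proof: PE-interpret $(\mbb{Z},+,\cdot)$ in $(G,\ab)$ via the surjection $|\cdot|_{s_1}$, handle equality and addition through $R_{s_1}$, and encode multiplication by the formula asserting the existence of $b\in K_{s_1}$ with $|b|_{s_2}=|a_1|_{s_1}$ and of $c$ in the set~\eqref{def_set} (with $g=b$) satisfying $|c|_{s_1}=|a_2|_{s_1}$ and $|c|_{s_2}=|a_3|_{s_1}$. Your use of $R_{s_2,s_1}(c,a_3)$ is harmless since $R_{s_2,s_1}(g,h)\Leftrightarrow R_{s_1,s_2}(h,g)$, so it is PE-definable once $R_{s_1,s_2}$ is; and your observation in the last paragraph that the computation $|c|_{s_1}=t$, $|c|_{s_2}=t\cdot t_1$ tacitly requires $|s_1|_{s_2}=|s_2|_{s_1}=0$ is a fair reading---the paper's own proof makes the same implicit use of this, and it is indeed secured in both applications (RAAGs and hyperbolic groups with $b_1\geq 2$) where $\ab(s_1),\ab(s_2)$ span a rank-$2$ direct summand.
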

	
\begin{proof}
We will PE-interpret the ring $(\mbb{Z}, +, \cdot)$ in $(G,\ab)$. 
 As interpretation map we use $|\cdot|_{s_1}: G \to \mbb{Z}$. The preimage under this map of the equality relation in $\mbb{Z}$, i.e.\ of the set $$\{(g,h)\in G \times G\mid |g|_{s_1}=|h|_{s_1}\},$$ is PE-definable in $(G,\ab)$, because the relation $R_{s_1}$ is by (1).

We now PE-define in $G$ the preimage by $|\cdot|_{s_1}$ of integer addition, namely the set $$S_+=\{(g_1, g_2, g_3) \in G^3\mid \quad \Phii{g_1} +\Phii{g_2} = \Phii{g_3}\}.$$ We claim that a tuple $(g_1, g_2, g_3) \in G^3$ belongs to $S_+$ if and only if $g_3K_{s_1} = g_1g_2K_{s_1}$. Indeed, if $g_3K_{s_1} = g_1g_2K_{s_1}$, then $g_3 = g_1g_2k$ for some $k\in K_{s_1}$, so $\Phii{g_3} = |g_1g_2k|_{s_1} = |g_1|_{s_1} + |g_2|_{s_1},$ where for the second equality we used Lemma \ref{l: morphism}. Conversely, if $\Phii{g_1} +\Phii{g_2} = \Phii{g_3}$ then $|g_1|_{s_1} + |g_2|_{s_2} = |g_1g_2|_{s_1} = |g_3|_{s_1},$ hence $ |g_1g_2g_3^{-1}|_{s_1}= 0$ and so $g_1g_2g_3^{-1}\in K_{s_1}$, and the claim follows. By the claim, we have $S_+ = \{(g_1, g_2, g_3) \in G^3\mid g_1g_2g_3^{-1}\in K_{s_1}\}$. It now follows from the fact that $K_{s_1}$ is PE-definable in $G$ that $S_+$ is  also PE-definable $K_{s_1}$ in $G$.

We next PE-define in $(G,\ab)$ the preimage under $|\cdot|_{s_1}$ of integer multiplication, i.e. of the set $$S_{\odot} = \{(g_1, g_2, g_3) \in G^3\mid \quad \Phii{g_1} \cdot \Phii{g_2} = \Phii{g_3}\}.$$ 

We claim that three elements $a_i \in G, i=1,2,3$, belong to $S_{\odot}$ if and only if there exist $b, c\in G$ such that \begin{align}\label{def_multiplication_1} &\left(G\vDash \exists y_1 \dots \exists y_n, \bigvee_{i=1}^m\Sigma_i(b, c, y_1, \dots, y_n) \right) \wedge \\ \label{def_multiplication_2}\wedge &\left( |a_1|_{s_1} = |b|_{s_2}\right) \wedge  \left(|b|_{s_1}= 0 \right)\wedge \left( |a_2|_{s_1}=|c|_{s_1}\right) \wedge  \left(|a_3|_{s_1}= |c|_{s_2}\right).\end{align}
If we let $t_i:=|a_i|_{s_1}$, where $t_i$ ($i=1,2,3$) are integers, note that by the definition of $S_{\odot}$ we have $a_1, a_2,a_3 \in S_{\odot}$ if and only if $t_3 = t_1t_2$.

We first prove the converse of the claim. Let $b,c$ be elements satisfying (\ref{def_multiplication_1}) and (\ref{def_multiplication_2}). Then $|b|_{s_2}= t_1$ and $|b|_{s_1}=0$. 
Due to the assumption (2) in the lemma, we have $$c=h_g^{-1} (s_1 s_2^{|b|_{s_2}})^{t} k  h_g = h_g^{-1}(s_1 s_2^{t_1})^{t}k h_g$$ for some $t \in \mbb{Z}$ and $k\in K_{s_1, s_2}$ since $c$ is part of a solution to some of the $\Sigma_i$ systems. By the additivity of $|\cdot|_{s_1}$ (Lemma \ref{l: morphism}) we get that $|c|_{s_1}=t$, and from $|c|_{s_1}=|a_2|_{s_1}=t_2$ we have $t=t_2$.  Moreover, again by additivity of $|\cdot|_{s_2}$, we obtain $|c|_{s_2} = t_1t= t_1 t_2$. The equality  $|c|_{s_2} = |a_3|_{s_1} = t_3$ in (\ref{def_multiplication_2}) now implies that $t_3 = t_1t_2$, as required.

 To prove the direct implication of the claim, suppose   $a_1, a_2, a_3 \in S_{\odot}$, so  $t_3=t_1t_2$. Take $b = s_2^{t_1}, c= h_b^{-1}(s_1 b)^{t_2}h_b$ (where $h_b$ is the element $h_g$ given in Item 2 of the statement of the lemma when taking $g=b$). It is straightforward to verify that these elements satisfy the conditions from \eqref{def_multiplication_2}. To check that $b,c$ satisfy \eqref{def_multiplication_1}, we must show that there exist  $y_1, \dots, y_n \in G$ that form a solution to one of the  systems of equations $\Sigma_i(b,c, y_1, \dots, y_n)$ ($i=1,\dots, m$). By assumption, a tuple $(x', y_1', \dots, y_n')$ is a solution to some of the systems $\Sigma_i(b,x, y_1, \dots, y_n)$ ($i=1,\dots, m$) if and only if  $$x'\in h_b^{-1}\{(s_1  s_2^{|b|_{s_2}})^t\mid t\in \mbb{Z}\}K_{s_1, s_2} h_b.$$
 We will argue that for any tuple of elements $(y_1, \dots, y_n)\in G$ we have that $y_1, \dots, y_n$ is a solution to one of the systems  $\Sigma_i(b,c, y_1, \dots, y_n)$ ($i=1,\dots, m$). Indeed, we have $$c = h_b^{-1}(s_1 s_2^{|b|_{s_2}})^{t_2} h_b\in h_b^{-1}\{(s_1  s_2^{|b|_{s_2}})^t\mid t\in \mbb{Z}\}K_{s_1, s_2}h_b,$$ hence $(c, y_1, \cdots, y_n)$ is a solution to some of the systems $\Sigma_i(b,x, y_1, \dots, y_n)$ ($i=1, \dots, m$) for any tuple of elements $y_1,\dots, y_n$ 
 This completes the proof of the claim. 

The proof of the lemma now follows from the fact that all conditions  appearing in  \eqref{def_multiplication_1} are PE-definable in $(G, \ab)$ by the assumptions (1) in the statement of the lemma.  
\end{proof}


\section{Partially commutative groups (RAAGs)}\label{sec:RAAGs}

We now show that the Diophantine Problem with abelianisation constraints is undecidable for RAAGs that are not abelian. The idea of the proof is, like in the free group case, to find two sufficiently independent elements which can be manipulated so that we can encode the ring $(\mbb{Z}, \oplus, \odot)$ in a RAAG. Finding two such elements is more difficult than for free groups: we first need to find two sets of vertices/generators, which we call \emph{weak modules}, and then  obtain the two abelian-primitive elements by multiplying together all the elements in each weak module, respectively.

\subsection{Preliminaries on RAAGs}

Let $\Gamma$ be a finite, undirected graph with no auto-adjacent vertices, and let $G\Gamma$ be the partially commutative group or RAAG induced by $\Gamma$ (we refer to \cite{raags_intro} for a thorough introduction to RAAGs). We identify the vertices of $\Gamma$ with the corresponding generators of $G\Gamma$. Given a vertex $v\in V\Gamma$ we let $link(v)$ be the set of vertices adjacent to $v$, and we let $star(v) = \{v\}\cup link(v)$. This notation is extended to sets of vertices: if $S\subseteq V\Gamma$, then $link(S) = \bigcap_{v\in S}link(v)$ and $star(S)= \{S\}\cup link(S)$. A \emph{clique} is a set of vertices $S$ such that each vertex in $S$ is adjacent to all other vertices in $S$.  Given an element $g\in G\Gamma$, we let $supp(g)$, the \emph{support} of $g$ be the set of vertices $v$ such that  $g = g_1 v g_2$ for some $g_1, g_2\in G\Gamma$ and $g_1 v g_2$ is a geodesic, that is, cannot be shortened by any group relations or free cancellations. This set is well-defined. We define $link(g) $ and $star(g)$ as $link(supp(g))$ and $star(supp(g))$.
 
Given $v,u \in V\Gamma$ we write $v\leq u$ if and only if $star(v) \subseteq star(u)$. This constitutes a partial order on $V\Gamma$ which has been  frequently used in the study of RAAGs.

\begin{definition}\leavevmode
\begin{itemize}
\item[(1)] A pair of (not necessarily distinct) vertices $v,u\in V\Gamma$ is a \emph{weak pair} if both $v$ and $u$ are minimal with respect to $\leq$, and $star(v)=star(u)$. 
\item[(2)] Let $S$ be a non-empty set of minimal vertices in $\Gamma$ such that any two vertices in $S$ form a weak pair, and such that $S$ is maximal (with respect to set inclusion) among all sets of vertices with these properties. Then $S$ is  a \emph{weak  module}. 
\end{itemize}
\end{definition}

It is immediate to check the following.

\begin{lemma}\label{lem:weakmod}
Let $S$ be a week module and $v \in S$. Then
\begin{itemize}
\item[(i)] $S\subseteq star(v)= star(S)$,
\item[(ii)] $S$ is a clique, and
\item[(iii)] if $T$ is a weak module distinct from $S$, then $S\cap T=\emptyset$.
\end{itemize}
\end{lemma}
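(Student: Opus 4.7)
The plan is to unpack the definitions: each item follows quickly once we note that inside a weak module $S$, all vertices $u \in S$ share the same star.

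For (i), I would start by observing that for any $u, v \in S$ the pair $(u,v)$ is a weak pair, so by definition $star(u) = star(v)$. Since $u \in star(u)$, this gives $u \in star(v)$, so $S \subseteq star(v)$. Next I would compute $star(S) = S \cup link(S)$. For any $w \in link(v)$ with $w \notin S$, the equality $star(u) = star(v)$ for every $u \in S$ places $w$ in $\{u\} \cup link(u)$, and since $w \neq u$ we conclude $w \in link(u)$; so $w \in \bigcap_{u \in S} link(u) = link(S)$. Conversely, $link(S) \subseteq link(v) \subseteq star(v)$. Combining with $S \subseteq star(v)$ yields $star(S) \subseteq star(v)$, and the reverse inclusion is clear (since $v \in S$ gives $star(S) \supseteq star(v) \setminus (S \setminus \{v\})$, but $S \setminus \{v\} \subseteq link(v)$ already lies in $star(S)$). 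Hence $star(S) = star(v)$.

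For (ii), take distinct $u,w \in S$. By the weak pair condition $star(u) = star(w) = \{w\} \cup link(w)$, and since $u \in star(u)$ with $u \neq w$, we must have $u \in link(w)$. Thus any two distinct vertices of $S$ are adjacent, i.e.\ $S$ is a clique. (No loops at a vertex is used here to force $u \in link(w)$ rather than $u = w$.)

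For (iii), suppose $S$ and $T$ are weak modules with some $v \in S \cap T$. For any $u \in S$ and $w \in T$, both $u$ and $w$ are minimal with respect to $\leq$, and $star(u) = star(v) = star(w)$ by the weak pair relations inside $S$ and inside $T$. Hence $(u,w)$ is a weak pair, so $S \cup T$ is a set of minimal vertices all of whose pairs are weak pairs. The maximality of $S$ forces $T \subseteq S$, and symmetrically $S \subseteq T$, so $S = T$. Contrapositively, distinct weak modules are disjoint.

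The hard part will be essentially nil: each item is a direct unpacking of the definitions of weak pair, weak module, $star$, and $link$. The only mild subtlety is keeping track of whether a vertex lies in its own link (it does not, since $\Gamma$ has no loops), which is what allows $star(u) = star(w)$ to translate into adjacency in part (ii) and into the $link(S)$ computation in part (i).
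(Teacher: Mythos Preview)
Your proof is correct. The paper does not actually prove this lemma; it simply states ``It is immediate to check the following'' and moves on. Your argument unpacks the definitions exactly as one would expect, and each of the three items is established correctly. The only minor comment is that in part (i) your parenthetical justification of the inclusion $star(v)\subseteq star(S)$ is a little convoluted; it would read more cleanly to say directly that $v\in S\subseteq star(S)$ and that any $w\in link(v)$ either lies in $S\subseteq star(S)$ or, by your earlier computation, lies in $link(S)\subseteq star(S)$.
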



\begin{example}
In the graph $\Gamma_1$ we have $star(a) \subset star(b)$ and $star(d) \subset star(c)$, so $a\leq b$, $d \leq c$, the vertices $a$ and $d$ are minimal, and  there are two weak modules $S=\{a\}$, $T=\{d\}$, each consisting of a single vertex.
 \[
 \Gamma_1= \xymatrix{a\ar@{-}[r] &b \ar@{-}[r]&c\ar@{-}[r] &d}
 \]
In the graph $\Gamma_2$ there are two weak modules: $S=\{a, b\}$ and $T=\{d\}.$
 \[
 \Gamma_2=  \xymatrix{ a  \ar@{-}[d] \ar@{-}[r] & c\ar@{-}[ld] \ar@{-}[r] &d\\
  b}
 \]
\end{example}

\begin{remark}
Having weak modules with more than one vertex significantly increases the complexity of the arguments. The reader may assume that every weak module consists of a single minimal vertex, as in $\Gamma_1$ above. 
\end{remark}

The following describes centralisers $C_G(\cdot)$ in RAAGs
, first for vertices and then for general elements.

\begin{proposition}[\cite{Servatius1989AutomorphismsOG}]\label{p: centralizers_RAAGs}
	Let $G=G\Gamma$ be a RAAG and  let $v$ be a vertex  of $\Gamma$. Then $C_G(v)$ is the subgroup of $G$ generated by $star(v)$. More precisely, $C_G(v) = \langle v \rangle \times \langle link(v)\rangle$.
\end{proposition}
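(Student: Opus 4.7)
The plan is to split the proof into three main steps: the direct-product structure together with the easy inclusion, the amalgamated-product decomposition of $G\Gamma$ along $v$, and a normal-form argument for the hard inclusion.

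First, I would establish the direct-product structure $\langle star(v)\rangle = \langle v\rangle \times \langle link(v)\rangle$ and the easy inclusion $\langle star(v)\rangle \subseteq C_G(v)$. The inclusion and the fact that $v$ commutes elementwise with $\langle link(v)\rangle$ are immediate from the defining relations of $G\Gamma$, since every vertex in $link(v)$ is joined to $v$ by an edge. For the trivial intersection $\langle v\rangle \cap \langle link(v)\rangle = 1$, I would use the retraction $\rho_v\colon G\Gamma \to \langle v\rangle \cong \mathbb{Z}$ that sends $v\mapsto v$ and every other vertex to $1$: this is a well-defined homomorphism (every commutator relator is killed), restricts to the identity on $\langle v\rangle$, and annihilates $\langle link(v)\rangle$.

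For the reverse inclusion $C_G(v)\subseteq \langle star(v)\rangle$, I would use the standard ``visual'' amalgamated-product decomposition $G\Gamma = A *_C B$ with $A = \langle star(v)\rangle$, $B = \langle V\Gamma\setminus\{v\}\rangle$, and $C = \langle link(v)\rangle$, which is valid because $V\Gamma = star(v)\cup(V\Gamma\setminus\{v\})$, the intersection is $link(v)$, and no vertex outside $star(v)$ is adjacent to $v$. By the first step, $v$ is central in $A$. A crucial additional ingredient is that $v$ is not conjugate in $G$ to any element of $C$: the exponent-sum homomorphism $|\cdot|_v\colon G\to\mathbb{Z}$ is well-defined because every vertex of a RAAG is abelian-primitive (Example \ref{ex:ap}), is conjugation-invariant since its target is abelian (Lemma \ref{l: morphism}), and satisfies $|v|_v = 1$ while vanishing identically on $C$.

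Now take $g\in C_G(v)$ and put it into amalgamated-product normal form $g = c\, g_1 g_2\cdots g_n$, with $c\in C$ and syllables $g_i\in(A\cup B)\setminus C$ alternating sides. Since $c\in C$ is central in $A$ and hence commutes with $v$, it suffices to prove $n=0$ for $g = g_1\cdots g_n \in C_G(v)$. I would compare the normal forms of $vg$ and $gv$: in every configuration except $g_1, g_n\in A$ with $vg_1, g_nv\notin C$, the two normal forms end up with different numbers of syllables or different sequences of $A$/$B$-sides, contradicting their uniqueness. In the remaining subcase, both $vg = (vg_1)g_2\cdots g_n$ and $gv = g_1\cdots g_{n-1}(g_nv)$ are valid normal forms of length $n$ with the same side pattern, and comparing them via the uniqueness of normal forms up to $C$-cosets yields $g_1^{-1}vg_1\in C$, contradicting the non-conjugacy established above. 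The main obstacle is the bookkeeping in this case analysis: one must enumerate all combinations of which sides the outer syllables lie on and whether multiplication by $v$ collapses into $C$, and the conjugation-invariant exponent sum is what ultimately closes the one subcase where syllable counts and sides both match.
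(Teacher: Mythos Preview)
The paper does not prove this proposition; it is stated as a citation from Servatius \cite{Servatius1989AutomorphismsOG} and used as a black box. There is therefore no in-paper proof to compare against.

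Your argument is correct. The visual splitting $G\Gamma = A *_C B$ with $A=\langle star(v)\rangle$, $B=\langle V\Gamma\setminus\{v\}\rangle$, $C=\langle link(v)\rangle$ is standard, and the observation that $v$ is not conjugate into $C$ (via the exponent-sum homomorphism $|\cdot|_v$) is exactly the obstruction that pins the centraliser down to $A$. The normal-form case analysis you sketch goes through; a slightly cleaner way to close it is via the Bass--Serre tree for this splitting: $v$ is elliptic, and since $v$ is not conjugate into $C$ it fixes no edge, hence its fixed-point set is the single vertex stabilised by $A$. Any $g\in C_G(v)$ must preserve this fixed set, so $g\in A$. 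This replaces the syllable bookkeeping entirely.

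For comparison, Servatius's original argument does not pass through a graph-of-groups decomposition at all: it works directly with geodesic (reduced) words in the RAAG and the combinatorics of the commutation relations, analysing how cancellation can occur in $gvg^{-1}v^{-1}$. Your amalgam approach is more structural and transfers immediately to any vertex-group factor in a visual splitting, at the cost of importing the normal-form theorem for amalgamated products; Servatius's approach stays entirely inside the RAAG but is more hands-on.
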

A \emph{cyclically reduced} element in a RAAG corresponds to a word $w$ that cannot be made shorter when cyclically permuting the letters of $w$ and then applying free cancellations and the group commuting relations.
In \cite{Servatius1989AutomorphismsOG} it is proved that for any cyclically reduced element $g$ in $G$ there exist unique elements $u_1, \dots, u_m$, which we call \emph{blocks}, and integers $n_1, \dots, n_m$ such that $[u_i,u_j]=1$ for all $i,j$ and $g=u_1^{n_1}\cdots u_m^{n_m}$. The latter expression is called the \emph{block-decomposition} of $g$.

\begin{theorem}[\cite{Servatius1989AutomorphismsOG}]\label{t: centralizers_RAAGS}
   Let $G=G\Gamma$ be a RAAG and  let $g\in G$. Let $h\in G$ be such that $g^h$ is cyclically reduced, and let $b_1, \dots, b_\ell\in G$ be elements such that the block decomposition of $g^h$ is $b_1^{t_1} \dots b_\ell^{t_\ell}$. Then
   $
   C_{G}(g) = \left(\Pi_{i=1}^{\ell} \langle b_i^{t_i}\rangle \times \langle link(b_1, \dots, b_\ell) \rangle\right)^{h^{-1}}.
   $
\end{theorem}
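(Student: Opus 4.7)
The first step is to reduce to the cyclically reduced case via the general identity $C_G(g) = h\, C_G(g^h)\, h^{-1} = C_G(g^h)^{h^{-1}}$, which is immediate from the definition of centraliser: $xg = gx$ if and only if $(h^{-1}xh)\,g^h = g^h\,(h^{-1}xh)$. So we may assume $g$ itself is cyclically reduced with block decomposition $g = b_1^{t_1}\cdots b_\ell^{t_\ell}$, where the blocks pairwise commute and $supp(b_i)\subseteq star(b_j)$ for all $j\neq i$. I would then prove the two inclusions separately.

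\textbf{The easy inclusion.} For $\supseteq$, each $b_j^{t_j}$ commutes with every $b_i^{t_i}$ by the block property, hence with $g$; each $v\in link(b_1,\dots,b_\ell)$ commutes with every letter of every block, hence with $g$; and all of these commute with each other pairwise, so their product lies in $C_G(g)$. To see that the product is in fact a \emph{direct} product with each $\langle b_i^{t_i}\rangle\cong\mathbb{Z}$ appearing as a free factor, one invokes the normal form theorem for RAAGs: the sets $supp(b_i)$ are pairwise disjoint and none meets $link(b_1,\dots,b_\ell)$, so no nontrivial relation holds among the listed generators beyond the commutations already encoded in the direct product.

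\textbf{The hard inclusion.} For $\subseteq$ I would induct on $\ell$. For the base case $\ell=1$, so $g = b^t$, apply Proposition \ref{p: centralizers_RAAGs} to each vertex $v\in supp(b)$: the relation $xb^t = b^tx$ read off a geodesic normal form forces $x$ to commute with each such $v$, so $supp(x)\subseteq \bigcap_{v\in supp(b)} star(v) = supp(b)\cup link(b)$. Splitting $x = x_b\cdot z$ with $supp(x_b)\subseteq supp(b)$ and $z\in\langle link(b)\rangle$, the equation $[x_b,b^t]=1$ then lives inside the subRAAG $\langle supp(b)\rangle$, where $b$ is a single primitive block; a normal-form analysis there forces $x_b$ into the expected cyclic subgroup. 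For the inductive step, note that $b_1^{t_1}$ commutes with $x$ (since it commutes with $g$ and with $g_1 := b_2^{t_2}\cdots b_\ell^{t_\ell}$); the base case applied to $b_1^{t_1}$ yields $x = b_1^{s_1 t_1}\cdot z_1$ with $z_1\in\langle link(b_1)\rangle$, and then $z_1$ commutes with $g_1$ inside the subRAAG on $link(b_1)$, so the inductive hypothesis applies and finishes the decomposition.

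\textbf{Main obstacle.} The real difficulty lies in the base case $\ell=1$: once we know $supp(x)\subseteq supp(b)\cup link(b)$, we still have to force the $supp(b)$-component of $x$ into the cyclic subgroup generated by a power of $b$, as opposed to some other element of $\langle supp(b)\rangle$ that happens to commute with $b^t$. The cleanest combinatorial route is to work in the Cartier--Foata piling / trace-monoid normal form, comparing the pilings of $xb^t$ and $b^tx$ heap by heap and using that $b$ is not a proper power; equivalently, one can exploit the action of $G\Gamma$ on its Salvetti cube complex, in which $b$ acts as a hyperbolic isometry with a combinatorial axis whose stabiliser is precisely $\langle b\rangle\times\langle link(b)\rangle$. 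This is the combinatorial core on which Servatius' original argument concentrates.
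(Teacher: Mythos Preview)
The paper does not prove this theorem: it is quoted from Servatius \cite{Servatius1989AutomorphismsOG} without proof, so there is nothing in the paper to compare your argument against. That said, your sketch has two genuine gaps worth flagging.

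In the base case you write that ``the relation $xb^t = b^tx$ read off a geodesic normal form forces $x$ to commute with each such $v$'', and from this deduce $supp(x)\subseteq supp(b)\cup link(b)$. The implication is false as stated: in the free group $F(a,b)$ the element $b=ab$ is a single block, $(ab)^2$ commutes with $ab$, yet $(ab)^2$ commutes with neither $a$ nor $b$. The support conclusion happens to be correct, but it is exactly the nontrivial content of Servatius' argument, not a consequence of Proposition~\ref{p: centralizers_RAAGs} applied vertex by vertex. You effectively concede this in your ``Main obstacle'' paragraph, but the earlier paragraph presents it as if it were already done.

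The inductive step has a more serious circularity. You assert that ``$b_1^{t_1}$ commutes with $x$ (since it commutes with $g$ and with $g_1$)''. This is not a valid group-theoretic inference: from $[x,g]=1$ and $[b_1^{t_1},g_1]=1$ one cannot conclude $[x,b_1^{t_1}]=1$ without already knowing something about how $x$ decomposes relative to the block supports. The fact that commuting with $g$ forces commuting with each block separately is precisely the statement you are trying to prove, so invoking it here is circular. Servatius' actual argument establishes the support constraint $supp(x)\subseteq supp(g)\cup link(g)$ first (via a careful normal-form or piling analysis of the equation $xg=gx$ for cyclically reduced $g$), and only then uses the disjointness of the block supports to split $x$; your induction tries to run this in the wrong order.
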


As already noted in Example \ref{ex:ap}, the following holds, and we will use it implicitly:
\begin{remark}
Let $v \in \Gamma$ be a vertex. Then the element $v$ is abelian-primitive in $G\Gamma$.
\end{remark}

\subsection{Main result for partially commutative groups (or RAAGs)}
We next turn to our main result about RAAGs, namely Theorem \ref{thm:DP_RAAGS}, which states that $\mc{DP}(G\Gamma, \ab)$ is undecidable in any nonabelian RAAG. 

\textbf{Outline of the main proof.}  
The proof of Theorem \ref{thm:DP_RAAGS} is an involved reprisal of the proof of undecidability of $\mc{DP}(F(\Sigma), \ab)$, and we emphasise the parallels between the two proofs in the next paragraphs. Our proof strategy is as follows. 

Recall that if $S\subseteq G$ is a subset of \ap \ elements of $G$, we let $R_S$ be the binary relation defined in $G$ by: $g, h\in G$ satisfy $R_S$ if and only if $|g|_s=|h|_s$ for all $s\in S$ (see Definition \ref{d: R1_R2}).  
The key lemma of this section is Lemma \ref{l: R_s2_PE_interp_in_RAAGS}. This states that if $S$ is a weak module, then $R_S$ is PE-definable in $G$. This is similar to the starting point for the proof of undecidability of $\mc{DP}(F(\Sigma), \ab)$, where we are able to PE-define $R_S$ in $(F(\Sigma), \ab)$ by means of centralisers (see the proof of Lemma \ref{lemma: DP_implications}). In the RAAG case, however, we are forced to deal with a whole weak minimal module $S$, rather than a single generator $s$. The proof is much simpler if we assume that  $G\Gamma$ has at least two distinct weak modules consisting each of a single vertex.

Next, we show that we can assume $G\Gamma$ has at least two distinct weak modules $S,T$ such that no vertex from $S$ is adjacent to a vertex in $T$. The weak modules $S$ and $T$ are similar to two generators $s,t$ in the free group case, where the subgroups $\langle s\rangle, \langle t\rangle$ served as proxies for $\mbb{Z}$. However, this cannot be identically replicated in a RAAG simply with $S$ and $T$ since the latter are sets of vertices. For this reason we take the ``diagonal generators of $S$ and $T$'', namely $h_1 = \prod_{v\in S} v$ and $h_2=\prod_{u\in T} u$, as we explain below. 

In free groups we are able to PE-define the sets $\langle s\rangle, \langle t\rangle$ via the centralisers of $s$ and $t$ and thus have ``access'' to the exponents of $s$ and $t$, which serve as our $\mbb{Z}$. In a RAAG the centraliser of $h_1$ (similarly for $h_2$) is more complicated: it has the form $\langle S \rangle \times \langle link(S) \rangle$, with $\langle S\rangle \cong \mbb{Z}^{|S|}$. Each one of these two factors presents a problem which has no parallel with the proof in free groups. The second factor is dealt with, roughly, by using the technical lemma Lemma \ref{l: technical_lemma}. The main idea is that the elements from  $\langle link(S) \rangle$ never use any vertex from $S$ or $T$, and thus eventually become irrelevant in our arguments. 
Regarding the first factor $\langle S\rangle$, to be able to read off the exponents of $h_i$ within it, $i=1,2$, we introduce the subgroup $G_{diag, S, T}$ defined as the set of elements $g$ of $G\Gamma$ that satisfy $R_{diag, S}$ and $R_{diag, T}$, where $g$ satisfies $R_{diag, S}$ if and only if $|g|_{s}=|g|_{s'}$ for all $s,s'\in S$ (i.e.\ if it is a power of one of the ``diagonal generators'' described above). In such a subgroup the centraliser of $h_i$ is $\langle h_i \rangle \times \langle link(S)\rangle$.

We finally show that $(G_{diag, S,T}, \ab)$ is PE-interpretable in $(G,\ab)$; thus it suffices to prove that $\mc{DP}(G_{diag, S,T}, \ab)$ is undecidable. At this point we have two \ap \ elements $h_1, h_2$ whose centralisers are almost as nicely behaved as in a free group, and the proof can be finished by applying the technical Lemma \ref{l: technical_lemma}, where its second condition is seen to be met in $G_{diag, S,T}$ by inspecting the centralisers $C_{G_{diag, S, T}}(h_1g)$, with $g$ ranging among  all $g\in G_{diag, S_1, S_2}$ satisfying $|g|_{h_1}=0$.


This concludes our intuitive explanation of the proof of Theorem \ref{thm:DP_RAAGS}. To prove this result formally will require the following several lemmas.

\begin{lemma}\label{l: R_s2_PE_interp_in_RAAGS}
	Let $G=G\Gamma$, and let $S$ be a weak module of $\Gamma$. Then the relation $R_{S}$ is PE-definable in $(G, \ab)$.
\end{lemma}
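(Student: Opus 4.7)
My plan is to reformulate $R_S(g,h)$ as an existential abelian condition and then verify its PE-definability piece by piece. Explicitly, I claim that $R_S(g,h)$ holds if and only if there exist $w,z_1,z_2\in G$ such that $[w,s]=1$ for every $s\in S$, the elements $z_1,z_2$ lie in a PE-definable subset $X\subseteq G$ with $\ab(X)=\ab(\langle V\Gamma\setminus S\rangle)$, and the abelian equalities $\ab(g)=\ab(wz_1)$ and $\ab(h)=\ab(wz_2)$ hold. The backward direction is immediate from additivity of $|\cdot|_s$ on abelianisations: since $z_i\in X$ contributes zero to every $s$-coordinate for $s\in S$, the equalities force $|g|_s=|w|_s=|h|_s$. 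The forward direction is witnessed by $w=\prod_{s\in S}s^{|g|_s}\in\langle S\rangle\subseteq C_G(S)$ together with $z_1,z_2\in\langle V\Gamma\setminus S\rangle$ chosen to have the required abelianisations.

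The centraliser condition $w\in C_G(S)=\bigcap_{s\in S}C_G(s)$ is directly PE-definable by the conjunction of $|S|$ commutator equations, and the abelian equalities $\ab(g)=\ab(wz_i)$ are already $(G,\ab)$-formulas. Thus everything hinges on constructing the set $X$ as a PE-definable subset of $G$ whose abelianisation image is precisely $\ab(\langle V\Gamma\setminus S\rangle)$.

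To build $X$, I plan to express it as an ordered product $X=\prod_{v\in V\Gamma\setminus S}Y_v$, where each $Y_v$ is PE-definable, $\ab(Y_v)\subseteq\ab(\langle V\Gamma\setminus S\rangle)$, and $\ab(v)\in\ab(Y_v)$; together these ensure $\ab(X)=\ab(\langle V\Gamma\setminus S\rangle)$. For vertices $v\in V\Gamma\setminus\mathrm{star}(S)$, no vertex of $S$ lies in $\mathrm{star}(v)$, so every element of $\langle\mathrm{star}(v)\rangle$ has zero $s$-coordinate for each $s\in S$; I can therefore take $Y_v:=C_G(v)$, which is PE-defined by the single equation $[z,v]=1$ and whose abelianisation lies in $\ab(\langle V\Gamma\setminus S\rangle)$ as required.

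The main obstacle is defining $Y_v$ for $v\in\mathrm{link}(S)$, because there $S\subseteq\mathrm{link}(v)$ and the full centraliser $C_G(v)=\langle v\rangle\times\langle\mathrm{link}(v)\rangle$ contains $\langle S\rangle$. For these vertices, I plan to exploit the minimality of the vertices of $S$ built into the weak-module hypothesis: for every $s\in S$ and every other vertex $u\neq s$ in its weak module, $\mathrm{star}(u)\not\subseteq\mathrm{star}(s)$, which supplies ``witness'' vertices sitting outside $\mathrm{star}(S)$ adjacent to the problematic elements of $\mathrm{link}(v)$. These witnesses can be converted into auxiliary commutation equations on $z\in C_G(v)$ that, combined with abelian constraints, force the $S$-coordinates of $\ab(z)$ to vanish, producing a PE-definable $Y_v$ with the desired abelianisation. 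I expect this final step, carving out the link-of-$S$ contribution, to be the most delicate point of the proof and to require either a careful combinatorial analysis of how $\mathrm{link}(v)$ interacts with $\mathrm{star}(S)$, or an inductive reduction to the same lemma applied to the induced RAAG on $\mathrm{link}(v)$ (a strictly smaller graph than $\Gamma$).
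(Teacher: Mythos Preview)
Your overall architecture is the same as the paper's: reduce $R_S$ to PE-defining (in $(G,\ab)$) a set whose abelianisation equals $\ab(\langle V\Gamma\setminus S\rangle)$, and build that set as a product of centralisers. Your treatment of vertices $v\in V\Gamma\setminus\mathrm{star}(S)$ via $Y_v=C_G(v)$ is exactly right, and the extra centraliser variable $w$ is harmless though redundant (the paper works directly with the unary relation $R_S'$: ``$|x|_s=0$ for all $s\in S$'').

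The gap is in your handling of $v\in\mathrm{link}(S)$. First, your stated fact is false: if $s,u\in S$ are both in the weak module then by definition $\mathrm{star}(u)=\mathrm{star}(s)$, not $\mathrm{star}(u)\not\subseteq\mathrm{star}(s)$. Second, and more importantly, you do not need a separate $Y_v$ for these vertices at all, nor any induction. The correct (and much simpler) observation is this: for $v\in\mathrm{link}(S)$, one has $\mathrm{star}(v)\not\subseteq\mathrm{star}(S)$. Indeed, strict containment $\mathrm{star}(v)\subsetneq\mathrm{star}(S)=\mathrm{star}(s)$ would contradict minimality of $s\in S$, while equality $\mathrm{star}(v)=\mathrm{star}(S)$ would force $v$ into $S$ by maximality of $S$ as a weak module. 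Hence there exists $u\in\mathrm{star}(v)\setminus\mathrm{star}(S)$, i.e.\ $u\in V\Gamma\setminus\mathrm{star}(S)$ with $v\in\mathrm{star}(u)\subseteq C_G(u)$. This shows
\[
V\Gamma\setminus S \;=\; \bigcup_{u\in V\Gamma\setminus\mathrm{star}(S)}\mathrm{star}(u),
\]
so the product $X=\prod_{u\in V\Gamma\setminus\mathrm{star}(S)}C_G(u)$ already has $\ab(X)=\ab(\langle V\Gamma\setminus S\rangle)$, and you are done. The ``delicate'' step you anticipated dissolves once you use both halves of the weak-module definition (minimality of vertices \emph{and} maximality of the set).
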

\begin{proof}
Let $R_S'$ refer to the unary relation satisfied by $g\in G$  if and only if $|g|_s = 0$ for all $s\in S$. 
    Since $|\cdot|_s$ is additive, $|k|_s=|h|_s$ for $k,h \in G$ if and only if $|kh^{-1}|_s=0$, so in order to show that $R_{S}$ is PE-definable in $(G, \ab)$ it suffices to prove that $R_S'$ is PE-definable in $(G,\ab)$. 

	First we claim that there exist vertices $u_1, \dots, u_n\in V\Gamma$ such that $$V\Gamma\setminus S = \bigcup_{i=1}^n star(u_i).$$
	 %
	Indeed, consider the set of vertices $V\Gamma\setminus star(S)$. For any $w \in V\Gamma\setminus star(S)$ we have that $star(w)\cap S=\emptyset$. Hence $$\bigcup_{u\in V\Gamma\setminus star(S)} star(u) \subseteq V\Gamma \setminus S.$$ Now we prove  that $V\Gamma \setminus S \subseteq \cup_{u\in V\Gamma\setminus star(S)} star(u)$: if $w\in V\Gamma$ does not belong to $star(S)$ then there is nothing to argue. If it does, then it belongs to $link(S) = star(S) \setminus S$. Next we show that  $star(w) \not \subseteq star(S)$. Once this is shown, we obtain that $w \in link(w')$ for any $w'\in star(w) \setminus star(S)$, and so $w\in\cup_{u\in V\Gamma\setminus star(S)} star(u)$, completing the proof of the first claim of the lemma: indeed, we can take $\{u_1, \dots, u_n\} = V\Gamma\setminus star(S)$. 
	
	To prove that $star(w) \not \subseteq star(S)$, note that  $star(w)$ cannot be properly contained in $star(S)$, since this would contradict the minimality of any of the vertices in $S$. Hence either  $star(w) \not\subseteq star(S)$ or $star(w)=star(S)$.  In the latter case, $w$ is a minimal vertex since $star(w)=star(S)=star(v)$ for any $v\in S$ and all such $v$'s are minimal. This would contradict the fact that $S$ is a weak module (since adding $w$ to $S$ would result in a larger set of minimal vertices all of them having the same star), and so we must have $star(w) \not\subseteq star(S)$, as required.
	
	 Our second claim is that  an element $x\in G$ satisfies $R_{S}'$ if and only if there exists $y \in \Pi_{i=1}^n C_G(u_i)$  such that  $\ab(x) = \ab(y)$, where the $u_i$'s are the vertices from the first claim of the present proof.  Indeed, due to the first claim and by Proposition \ref{p: centralizers_RAAGs} about centralisers in RAAGs, any element in $\Pi_{i=1}^n C_G(u_i) = \Pi_{i=1}^n \langle star(u_i)\rangle$ satisfies $R_{S}'$. Conversely, if an element $x$ belongs to $\langle V\Gamma \setminus S\rangle$, then  $x =  z_1 \dots z_{k_x}$ where $z_i \in V\Gamma \setminus S$ for all $i=1, \dots, k_x$, and again due to the first  claim of the lemma and Proposition \ref{p: centralizers_RAAGs}, we have that for each $z_i$ there exists $u_{j_i}$ such that $z_i \in C_G(u_{j_i})=\langle star(u_{j_i})\rangle$. Hence $x\in \Pi_{i=1}^{k_x} C_G(u_{j_i})$, and so $\ab(x) \in  \Pi_{i=1}^{n} \ab(C_G(u_{i}))$. It follows that $\ab(x) = \ab(y_1) \dots  \ab(y_n)$ for some $y_i\in C_G(u_{i})$ such that $\ab(y_i)\in \ab(C_G(u_{i}))$ ($i=1,\dots, n$). Thus it suffices to take $y=y_1\dots, y_n$.

	The lemma now follows immediately from the second claim and the fact that in any group the centraliser of an element is PE-definable.
\end{proof}

It is important that $G\Gamma$ does not decompose as a direct product. In this case the set of minimal vertices of $\Gamma$ satisfies certain favorable properties, as shown next.
\begin{lemma}\label{lem:two_modules}
	Let $\Gamma$ be a graph  and suppose that for any two weak modules $S, T$ we have $star(S)\cap T \neq \emptyset$, i.e.\ some vertex of $S$ is adjacent to some vertex of $T$ (this includes the case when $\Gamma$ has only one weak module). Then  $G\Gamma = \langle M \rangle \times \langle V\Gamma \setminus M \rangle$, where $M$ is the set of minimal vertices of $\Gamma$.
	
	In particular, if $G\Gamma$ is not a nontrivial direct product, then $\Gamma$ has at least two distinct  weak modules $S, T$ such that no vertex in $S$ is adjacent to a vertex in $T$.
\end{lemma}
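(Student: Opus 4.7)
The plan is to show that every vertex in $M$ is adjacent in $\Gamma$ to every vertex in $V\Gamma \setminus M$; this alone forces $G\Gamma = \langle M\rangle \times \langle V\Gamma\setminus M\rangle$, since the defining relations of a RAAG are commutators between adjacent vertices, and full adjacency between $M$ and $V\Gamma\setminus M$ makes the two parabolic subgroups commute elementwise while their defining subgraphs remain disjoint.

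First I would upgrade the hypothesis into a sharper statement: for any two distinct weak modules $S, T$, every vertex of $S$ is adjacent to every vertex of $T$. Indeed, the hypothesis yields some $u \in star(S) \cap T$, and since $S \cap T = \emptyset$ by Lemma \ref{lem:weakmod}(iii), we have $u \in link(S)$, so $u$ is adjacent to every $v \in S$. Now any $u' \in T$ shares the same $star$ as $u$ (since $T$ is a weak module), hence the same link outside $T$; since $v \in S$ lies outside $T$, every $u' \in T$ is likewise adjacent to $v$.

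Next I would carry out the main adjacency step. Pick $v \in M$ and $w \in V\Gamma \setminus M$. Since $\Gamma$ is finite, the nonempty poset $\{u\in V\Gamma : u \leq w\}$ has a minimal element $w'$, which must in fact be minimal in all of $\Gamma$: any strict descendant of $w'$ in $V\Gamma$ would still satisfy $\leq w$, contradicting the minimality of $w'$ within the subposet. So $w' \in M$ with $star(w')\subseteq star(w)$. I then split on whether $v$ and $w'$ lie in the same weak module. If they lie in distinct modules, the strengthened hypothesis gives $v$ adjacent to $w'$, so $v \in link(w') \subseteq star(w)$. If they share a weak module, then $star(v) = star(w') \subseteq star(w)$. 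Either way $v \in star(w)$; and since $v \in M$ while $w \notin M$ we have $v \neq w$, so $v \in link(w)$.

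The ``in particular'' statement follows by contrapositive. If no two distinct weak modules of $\Gamma$ avoid one another (the case $S=T$ being automatic as $S\subseteq star(S)\cap T$), the hypothesis of the main claim is satisfied, and so $G\Gamma = \langle M\rangle \times \langle V\Gamma\setminus M\rangle$. Since $M$ is nonempty, this fails to be a nontrivial direct product only when $V\Gamma = M$, in which case the upgraded hypothesis forces $\Gamma$ to be a join of its weak-module cliques, making $G\Gamma$ free abelian and hence automatically a nontrivial direct product as soon as $|V\Gamma|\geq 2$. The main obstacle I anticipate is the ``same weak module'' case in the adjacency argument; the uniform-$star$ property of weak modules bridges $v$ to $w$ through $w'$ precisely in this case, and it is the reason the lemma is framed via weak modules rather than individual minimal vertices.
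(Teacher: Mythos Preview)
Your proof is correct and follows essentially the same strategy as the paper: upgrade the hypothesis to full adjacency between distinct weak modules, then push minimality down below an arbitrary vertex to conclude adjacency with all of $M$. The paper streamlines your case split by first observing that $M$ itself is a clique (which absorbs both your ``same module'' and ``different module'' cases at once), and then for any vertex $u$ picks a minimal $v\leq u$ and uses $M\subseteq star(v)\subseteq star(u)$ directly; your version is slightly more explicit but amounts to the same argument, and your handling of the $V\Gamma=M$ case in the ``in particular'' clause is in fact more careful than the paper's.
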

\begin{proof}
    Suppose that for any two weak modules $S,T$, some vertex of $S$ commutes with some vertex of $T$. Since all vertices in a weak module have the same star, we have that in this case, for any two weak modules $S, T$, all vertices of $S$ commute with all vertices of $T$. Hence $M$, which consists of the union of all weak modules, forms a clique. 
    
	Now observe that  if $u$ is an arbitrary vertex of $\Gamma$, then there exists a minimal vertex $v\in M$ ($v=u$ if $u \in M$) such that $ star(v)  \subseteq star(u)$ since $\leq$ is a partial order in $V\Gamma$.  Since $M\subseteq star(v)$ we have  $M\subseteq star(u)$, so $u$ is adjacent to every vertex in $M$. The lemma follows. 
\end{proof}


\begin{lemma} \label{lem:C(gh)}
Let $S$ and $T$ be two distinct weak modules of $\Gamma$ such that no vertex in $S$ is adjacent to a vertex in $T$. Let $S_0\subseteq S$ and $T_0\subseteq T$ be two non-empty subsets of $S$ and $T$, and let $g=\prod_{v\in S_0} v$, $h=\prod_{u\in T_0} u$. Then $C_{G\Gamma}(gh) = \langle gh\rangle  \times \langle link(gh) \rangle$, with  $\langle link(gh) \rangle \subseteq \langle V\Gamma \setminus (S \cup T) \rangle$.
\end{lemma}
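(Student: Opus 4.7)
The plan is to apply Theorem~\ref{t: centralizers_RAAGS} directly to $gh$, so the first task is to show that $gh$ is already cyclically reduced and to identify its block decomposition. By Lemma~\ref{lem:weakmod}(ii), both $S$ and $T$ are cliques, so $g=\prod_{v\in S_0}v$ and $h=\prod_{u\in T_0}u$ are products of pairwise-commuting distinct vertices, hence are represented by words in which each of the vertices of $S_0$, respectively $T_0$, appears exactly once (with exponent $+1$). The hypothesis that no vertex of $S$ is adjacent to a vertex of $T$ then guarantees that no vertex of $S_0$ commutes with any vertex of $T_0$. In particular, $g$ and $h$ do not commute. From this I would deduce that the word $gh$ is cyclically reduced: no free cancellation is possible since every letter appears once with positive exponent, and no cyclic rotation can merge letters because no commutation can send an $S_0$-letter past a $T_0$-letter.

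Next I would argue that the block decomposition of $gh$ is the trivial one $gh = (gh)^1$, i.e.\ $\ell=1$, $b_1 = gh$, $t_1 = 1$. Suppose for contradiction that $gh = b_1^{t_1}\cdots b_\ell^{t_\ell}$ with $\ell \ge 2$ and pairwise-commuting blocks. Each $b_i$ has $supp(b_i)\subseteq supp(gh) = S_0 \sqcup T_0$. Since no vertex of $S_0$ commutes with any vertex of $T_0$, two commuting blocks $b_i, b_j$ cannot have their supports meet both $S_0$ and $T_0$ in a way that forces a non-commuting pair of vertices; a straightforward case check shows that forcing $\ell\ge 2$ forces some $b_i$ to consist only of $S_0$-vertices and some $b_j$ to consist only of $T_0$-vertices, but then $b_i$ and $b_j$ cannot commute. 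Hence $\ell = 1$; and since $gh$ contains each vertex of $S_0 \cup T_0$ with exponent $1$, we also get $t_1 = 1$.

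With the block decomposition in hand, Theorem~\ref{t: centralizers_RAAGS} applied with conjugator equal to the identity yields
\[
C_{G\Gamma}(gh) \;=\; \langle gh \rangle \times \langle link(gh) \rangle,
\]
where $link(gh)=link(supp(gh))=link(S_0\cup T_0)$. To finish, I would verify the containment $\langle link(gh)\rangle \subseteq \langle V\Gamma\setminus(S\cup T)\rangle$ by showing $link(S_0\cup T_0)\subseteq V\Gamma \setminus (S\cup T)$. Indeed, if $w\in link(S_0\cup T_0)$, then $w$ is adjacent to some vertex of $S_0\subseteq S$ and to some vertex of $T_0\subseteq T$. If $w$ were in $S$, then $w$ would be adjacent to a vertex of $T$, contradicting the hypothesis; symmetrically $w\notin T$.

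The step I expect to require the most care is the verification that $gh$ is a single block of exponent $1$, since uniqueness of block decomposition in RAAGs is subtle and one must rule out the possibility of a non-trivial decomposition into pairwise-commuting pieces using the no-edge hypothesis between $S$ and $T$. The other steps (cyclic reducedness, the concluding link computation) follow by direct inspection once the supports of $g$ and $h$ are understood.
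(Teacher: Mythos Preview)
Your proposal is correct and follows essentially the same approach as the paper: show $gh$ is cyclically reduced, argue that its support $S_0\cup T_0$ admits no nontrivial partition into mutually commuting pieces (so the block decomposition is the single block $gh$), apply Theorem~\ref{t: centralizers_RAAGS}, and then use the no-edge hypothesis between $S$ and $T$ to get $link(gh)\subseteq V\Gamma\setminus(S\cup T)$. You have simply spelled out in more detail the steps the paper leaves implicit, including the observation that $t_1=1$.
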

\begin{proof}
By the definition of $S_0$ and $T_0$ the element $gh$ is cyclically reduced. Moreover, there is no way to partition the support of $gh$, namely $S_0\cup T_0$, into two nonempty distinct subsets of vertices $A, B$, such that each vertex in $A$ commutes with $B$ and vice-versa. Hence by Theorem \ref{t: centralizers_RAAGS}, we have $C_{G\Gamma}(gh) = \langle gh\rangle \times \langle link(gh) \rangle$. The last part of the lemma follows from the assumption about $S$ and $T$ that no vertex in $S$ is adjacent to a vertex in $T$.
\end{proof}
\begin{definition}
If $X$ is a subset of \ap \ elements of $G$, let $R_{diag, X}$ be the unary relation defined as: $g\in G$ satisfies $R_{diag, X}$ if and only if $|g|_{v}=|g|_{u}$ for any two $v,u\in X$.
\end{definition}
Note that if $X$ has only one vertex then all $g \in G$ satisfy the relation $R_{diag, X}$.

\begin{lemma}\label{l: diag_PE_def}
    Let $\Gamma$ be a graph  such that $G\Gamma$ does not decompose as a nontrivial direct product. Let $S$ and $T$ be two distinct weak modules of $\Gamma$ such that no vertex from $S$ is adjacent to $T$ (these exist by Lemma \ref{lem:two_modules}). Then the relations $R_{diag, S}$ and $R_{diag, T}$ are PE-definable in $(G\Gamma,\ab)$.
\end{lemma}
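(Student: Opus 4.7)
\emph{Plan.} Set $h_S := \prod_{v \in S} v$ and $h_T := \prod_{u \in T} u$; these are well-defined nontrivial elements since $S, T$ are nonempty cliques by Lemma \ref{lem:weakmod}(ii). The strategy is to PE-define $R_{diag, S}$ in $(G\Gamma, \ab)$ via the equivalence
\begin{equation*}
  R_{diag, S}(g) \ \Longleftrightarrow \ \exists\, y \in G\Gamma \text{ such that } [y,\, h_S h_T] = 1 \ \wedge\ R_S(g, y).
\end{equation*}
The commutation $[y, h_S h_T] = 1$ is a single group equation with constants, and the binary relation $R_S(g, y)$ is PE-definable in $(G\Gamma, \ab)$ by Lemma \ref{l: R_s2_PE_interp_in_RAAGS}. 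Granting the displayed equivalence, a disjunction/conjunction of PE-definable conditions existentially quantified over $y$ PE-defines $R_{diag, S}$, and the argument for $R_{diag, T}$ is verbatim symmetric (swap the roles of $S$ and $T$).

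\emph{Forward direction ($\Rightarrow$).} Suppose $g$ satisfies $R_{diag, S}$, with common value $k = |g|_v$ for every $v \in S$. Take $y := (h_S h_T)^k$. Then $[y, h_S h_T] = 1$ trivially, and since $h_S h_T$ contributes $1$ to $|\cdot|_v$ for each $v \in S$, we obtain $|y|_v = k = |g|_v$ for every $v \in S$, so $R_S(g, y)$ holds.

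\emph{Backward direction ($\Leftarrow$).} Suppose some $y$ satisfies both conditions. Applying Lemma \ref{lem:C(gh)} with $S_0 = S$ and $T_0 = T$ — which is permissible precisely because no vertex of $S$ is adjacent to a vertex of $T$ — we obtain
\begin{equation*}
  C_{G\Gamma}(h_S h_T) = \langle h_S h_T \rangle \times \langle link(h_S h_T) \rangle, \qquad \langle link(h_S h_T) \rangle \subseteq \langle V\Gamma \setminus (S \cup T) \rangle.
\end{equation*}
Hence $y = (h_S h_T)^k \cdot w$ for some $k \in \mathbb{Z}$ and some $w \in \langle V\Gamma \setminus (S\cup T)\rangle$. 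Since the support of $w$ avoids $S$, we have $|w|_v = 0$ for every $v \in S$, and therefore $|y|_v = k$ for every $v \in S$. The relation $R_S(g, y)$ then forces $|g|_v = k$ uniformly in $v \in S$, i.e., $g$ satisfies $R_{diag, S}$.

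\emph{Main obstacle.} The entire argument pivots on Lemma \ref{lem:C(gh)}, and the essential point is that the ``junk'' factor $\langle link(h_S h_T)\rangle$ inside $C_{G\Gamma}(h_S h_T)$ avoids \emph{both} $S$ and $T$. This is guaranteed by the hypothesis that no vertex of $S$ is adjacent to a vertex of $T$, which in turn is ensured, by Lemma \ref{lem:two_modules}, by the assumption that $G\Gamma$ is not a nontrivial direct product. Dropping this non-adjacency would allow $w$ to carry extra contributions to $|y|_v$ for some $v \in S$, invalidating the backward implication; the role of $h_T$ (rather than just $h_S$) in the centraliser equation is precisely to rigidify the support of $y$ by forcing any non-cyclic part to live outside $S \cup T$.
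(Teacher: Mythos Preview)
Your proof is correct and follows essentially the same approach as the paper's own proof. The only difference is cosmetic: the paper uses a single vertex $u\in T$ in place of your full product $h_T=\prod_{u\in T}u$, but since Lemma~\ref{lem:C(gh)} applies to any nonempty $T_0\subseteq T$, both choices work and the argument is otherwise identical.
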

\begin{proof}
 Let $w = \prod_{v\in S} v$ and let $u \in T$.  Then $C(wu)= \langle wu\rangle \times \langle link(wu)\rangle= \{(wu)^t \mid t \in \mathbb{Z}\}\times \langle link(wu)\rangle$ by Lemma \ref{lem:C(gh)}. Notice that all elements in $\langle wu\rangle $ satisfy $R_{diag, S}$ and that no vertex from $S\cup T$ appears in the support of any element from $\langle link(wu)\rangle$. It follows that an element $g\in G\Gamma$ satisfies $R_{diag, S}$ if and only if there exists $h\in C(wu)$ such that $|g|_s = |h|_s$ for all $s\in S$, i.e.\ if $g,h$ satisfy the relation $R_S$. Since $R_S$ is PE-definable in $(G\Gamma, \ab)$ by Lemma \ref{l: R_s2_PE_interp_in_RAAGS}, $R_{diag, S}$ is also PE-definable in $(G\Gamma, \ab)$.
\end{proof}

Given two distinct weak modules $S_1, S_2$ of $\Gamma$, we let $G_{diag, S_1, S_2}$ be the subset of $G=G\Gamma$ formed by those elements of $G$ satisfying the unary relations $R_{diag, S_1}, R_{diag, S_2}$:
$$G_{diag, S_1, S_2}=\{g \in G \mid R_{diag, S_i}(g) \ \textrm{holds for} \, i=1,2\}.$$
Recall $R_S, R_{s_1, s_2}$ and $R_{diag, S}$ from Definition \ref{d: R1_R2}.

\begin{lemma}\label{lem:ap_elements}
    Let $\Gamma$ be a graph such that $G=G\Gamma$ does not decompose as a nontrivial direct product, and let $S_1, S_2$ be two distinct weak modules of $\Gamma$ such that no vertex from $S_1$ is adjacent to a vertex of $S_2$. Then 
    \begin{enumerate}
    \item $G_{diag, S_1, S_2}$ is a PE-definable subgroup of $(G, \ab)$, 
    \item the elements $h_1=\prod_{w\in S_1} w$ and $h_2=\prod_{w\in S_2} w$ are \ap \ in $G_{diag, S_1, S_2}$, and
    \item the relations $R_{h_1}, R_{h_2}, R_{h_1, h_2}$  are PE-definable in $(G_{diag, S_1, S_2}, \ab)$.

   \end{enumerate} 
\end{lemma}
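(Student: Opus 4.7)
For part (1), my plan is to observe that $G_{diag, S_1, S_2}$ is the conjunction of $R_{diag, S_1}$ and $R_{diag, S_2}$, which are PE-definable in $(G, \ab)$ by Lemma \ref{l: diag_PE_def}, and that it is a subgroup because each exponent-sum $|\cdot|_v$ is a homomorphism (Lemma \ref{l: morphism}), so the defining equalities $|g|_v = |g|_{v'}$ are closed under products and inverses.

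For part (2), I will fix $s \in S_1$ and consider $\phi_s := |\cdot|_s$ restricted to $G_{diag, S_1, S_2}$, which is a homomorphism to $\mbb{Z}$ with $\phi_s(h_1) = 1$. This factors through $G_{diag, S_1, S_2}^{\ab}$ as a surjection that splits $\langle \ab(h_1)\rangle$ off as a direct $\mbb{Z}$-summand, so $h_1$ is \ap\ in $G_{diag, S_1, S_2}$; the argument for $h_2$ is symmetric. A useful byproduct I will exploit in part (3) is that $|g|_{h_1}$ computed in $G_{diag, S_1, S_2}$ coincides with $|g|_s$ computed in $G$ for any $s \in S_1$, and similarly for $h_2$.

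For part (3), the identification above reduces PE-defining $R_{h_1}$ in $(G_{diag, S_1, S_2}, \ab)$ to PE-defining $R_{S_1}$ restricted to $G_{diag, S_1, S_2}$, and I plan to adapt Lemma \ref{l: R_s2_PE_interp_in_RAAGS} by setting
\[
R_{h_1}(g, h) \iff \exists y \in C_{G_{diag, S_1, S_2}}(h_2)\cdot \prod_{u \in V\Gamma \setminus (star(S_1)\cup S_2)} C_{G_{diag, S_1, S_2}}(u) \text{ with } \ab(gh^{-1}) = \ab(y).
\]
This is a legitimate PE-formula in $(G_{diag, S_1, S_2}, \ab)$ since centralisers are PE-definable via commuting equations and $h_2$ and each $u$ lie in $G_{diag, S_1, S_2}$. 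The forward direction will be routine: Proposition \ref{p: centralizers_RAAGs} gives $C_G(u) = \langle star(u)\rangle$ with $star(u)\cap S_1 = \emptyset$ (by the same calculation as in Lemma \ref{l: R_s2_PE_interp_in_RAAGS}), and $C_G(h_2) = \langle S_2\rangle \times \langle link(S_2)\rangle$ with $link(S_2)\cap S_1 = \emptyset$ by the standing hypothesis, so every witness $y$ satisfies $|y|_{h_1} = 0$ and hence $|gh^{-1}|_{h_1} = 0$.

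The main obstacle will be the reverse direction. Writing $\ab(gh^{-1}) = a\,\ab(h_2) + \sum_{v \in V\Gamma\setminus(S_1\cup S_2)} b_v\, \ab(v)$, I will distribute each $v^{b_v}$ among the factors: vertices $v \notin star(S_1)\cup S_2$ are placed directly inside $C_{G_{diag, S_1, S_2}}(v)$, and vertices $v \in link(S_1)\cap link(S_2)$ fit into $C_{G_{diag, S_1, S_2}}(h_2)$ since $v \in link(S_2)$. The delicate case is $v \in link(S_1) \setminus link(S_2)$; there I will show $v$ must admit a neighbour $w \in V\Gamma \setminus (star(S_1)\cup S_2)$, because otherwise $link(v) \subseteq star(S_1)\cup S_2$, which either forces $link(v)\cap S_2 \neq \emptyset$ (yielding $v \in link(S_2)$ by the weak-module property of $S_2$, contradicting the case) or $star(v) \subseteq star(S_1)$ (forcing $v \in S_1$ by minimality of the vertices of $S_1$ together with maximality of the weak module $S_1$, again a contradiction). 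Such $w$ admits $v^{b_v}$ into $C_{G_{diag, S_1, S_2}}(w)$, since $v \in link(w)$. The case of $R_{h_2}$ is symmetric. Finally, for $R_{h_1, h_2}(g, h) = (|g|_{h_1} = |h|_{h_2})$, I will introduce a witness $y \in C_{G_{diag, S_1, S_2}}(h_1 h_2)$: by Lemma \ref{lem:C(gh)} every such $y$ has the form $(h_1 h_2)^k \cdot \ell$ with $\ell \in \langle link(h_1 h_2)\rangle \subseteq \langle V\Gamma\setminus(S_1\cup S_2)\rangle$, so $|y|_{h_1} = |y|_{h_2} = k$, and the relation unpacks as $R_{h_1}(g, y) \wedge R_{h_2}(h, y)$.
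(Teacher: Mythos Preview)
Your proposal is correct. Parts (1) and the $R_{h_1,h_2}$ argument in (3) coincide with the paper's proof; your part (2), splitting off $\langle \ab(h_1)\rangle$ via the surjection $|\cdot|_s:G_{diag,S_1,S_2}\to\mathbb{Z}$ with $|h_1|_s=1$, is a tidier variant of the paper's argument (which writes out a normal form $g=h_1^{n_1}h_2^{n_2}\bigl(\prod v^{n_v}\bigr)c$ and checks the relevant intersection in the abelianisation is trivial).

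For $R_{h_i}$ in part (3) you take a genuinely different route. The paper simply invokes the proof of Lemma~\ref{l: R_s2_PE_interp_in_RAAGS} to obtain $R'_{S_i}$ PE-definable in $(G,\ab)$, and then asserts that since $G_{diag,S_1,S_2}$ is PE-definable in $(G,\ab)$ this descends to a PE-definition in $(G_{diag,S_1,S_2},\ab)$. You instead rebuild the witness set inside $G_{diag,S_1,S_2}$ itself, replacing the centralisers $C_G(u_j)$ with $u_j\in V\Gamma\setminus star(S_1)$ by $C_{G_{diag}}(h_2)$ together with $C_{G_{diag}}(u)$ for $u\in V\Gamma\setminus(star(S_1)\cup S_2)$. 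This is necessary because when $|S_2|>1$ the individual vertices of $S_2$ do not lie in $G_{diag,S_1,S_2}$, so the anchors from Lemma~\ref{l: R_s2_PE_interp_in_RAAGS} are not all available; bundling them into $h_2$ fixes this. The price is the extra case analysis for $v\in link(S_1)\setminus link(S_2)$, where your minimality-plus-maximality argument correctly produces a neighbour $w\notin star(S_1)\cup S_2$. The payoff is that your formula lives honestly in $(G_{diag,S_1,S_2},\ab)$ with all quantifiers and constants drawn from that subgroup, which is exactly what the downstream application of Lemma~\ref{l: technical_lemma} requires.
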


\begin{proof}
\noindent
\begin{enumerate}

\item The set $G_{diag, S_1, S_2}$ is closed under multiplication, and thus a subgroup, by the additivity of the maps $|\cdot|_s$ (Lemma \ref{l: morphism}); furthermore, $G_{diag, S_1, S_2}$ is PE-definable in $(G,\ab)$ by Lemma \ref{l: diag_PE_def}. 

\item Next we show that $h_i=\prod_{w\in S_i} w$ is \ap \ in $G_{diag, S_1, S_2}$ for $i=1,2$. Let $\ab$ be the natural projection of $G$ onto its abelianisation. We first note that any element $g$ of $G_{diag, S_1, S_2}$ can be written uniquely in the following form:
$$
g=h_1^{n_1(g)} h_2^{n_2(g)} \left(\prod_{v\in V\Gamma \setminus (S_1\cup S_2)} v^{n_v(g)} \right) c
$$
for some $n_1(g), n_2(g), n_v(g)\in \mbb{Z}$ ($v\in V\Gamma\setminus (S_1\cup S_2)$) and some $c\in G'$ (this can be seen by projecting $g$ onto $\ab(G)$ and rearranging the generators).

Let $\ab_{diag, S_1, S_2}$ be the projection of $G_{diag, S_1, S_2}$ onto its abelianisation. We claim that $$
\langle \ab_{diag, S_1, S_2}(h_1)\rangle \cap \langle \ab_{diag, S_1, S_2}\left(\{h_2\}\cup V\Gamma\setminus(S_1\cup S_2)\right) \rangle = 1.
$$ Note that once this claim is proved we will have shown that $h_1$ is \ap. 
To prove the claim, note that for any $\ab_{diag, S_1, S_2}(g)$ in this intersection we have 
$$
h_1^{-n_1(g)} = h_2^{n_2(g)} \left( \Pi_{V\Gamma\setminus(S_1\cup S_2)} v^{n_v(g)} \right) c
$$
for some $c\in ker(\ab_{diag, S_1,  S_2}) = G_{diag, S_1, S_2}' \subseteq G'$.
The additivity of the maps $|\cdot|_u$ ($u\in V\Gamma$) now implies that we must have $n_1(g)=n_2(g)=n_v(g)=0$ for all $v$. The claim follows. The argument for $h_2$ is analogous.
%


\item That $R_{h_i}$ ($i=1,2$) is PE-definable in $(G_{diag, S_1, S_2}, \ab)$ follows from the fact that a pair of  elements $g,h\in G_{diag, S_1, S_2}$   satisfies $R_{h_i}$ if and only if they satisfy $R'_{h_i}$, as defined at the beginning of the proof Lemma \ref{l: R_s2_PE_interp_in_RAAGS}; by that proof $R'_{h_i}$ is PE-definable in $(G, \ab)$, and since $G_{diag, S_1, S_2}$ is PE-definable in $(G, \ab)$ we have that $R'_{h_i}$, and therefore $R_{h_i}$, is also PE-definable in $(G_{diag, S_1, S_2}, \ab)$.



To PE-define the relation $R_{h_1, h_2}$ in  $(G_{diag, S_1, S_2}, \ab)$, observe that two elements $x,y\in G_{diag, S_1, S_2}$ satisfy this relation if and only if  there exists an element $z\in \langle h_1h_2\rangle \times \langle link(h_1h_2)\rangle$ such that the pair $x,z$ satisfies $R_{h_1}$ and the pair $y,z$ satisfies $R_{h_2}$ (this is because $|z|_{h_1} = |z|_{h_2}$ for all $z\in \langle h_1h_2\rangle\times \langle link(h_1h_2)\rangle$, since no vertex from $S_1\cup S_2$ belongs to $link(h_1h_2)$ by assumption).  

Next we claim that  $$\langle h_1h_2\rangle \times \langle link(h_1h_2)\rangle= C_{G_{diag, S_1, S_2}}(h_1h_2).$$ Indeed, $h_1h_2$ is cyclically reduced, and because any vertex in $S_1$ is not adjacent to any vertex in $S_2$, the block decomposition of $h_1h_2$ consists of one block. Then the claim follows from Theorem \ref{t: centralizers_RAAGS}.
 The claim shows that $\langle h_1h_2\rangle\times \langle link(h_1h_2)\rangle$ is PE-definable in $(G_{diag, S_1, S_2}, \ab)$. This, together with the fact that $R_{h_1}$ and $R_{h_2}$ are PE-definable in $(G_{diag, S_1, S_2}, \ab)$, implies that $R_{h_1, h_2}$ is also PE-definable in $(G_{diag, S_1, S_2}, \ab)$, as required.
\end{enumerate}
\end{proof}

%
%

Combining all the lemmas in this section leads to the main result for RAAGs (see the outline of the proof at the beginning of this section):
 
\begin{theorem} \label{thm:DP_RAAGS}
	Let $G=G\Gamma$ be a RAAG such that $G$ is not   abelian. Then $\mc{DP}(G, \ab)$ is undecidable.
\end{theorem}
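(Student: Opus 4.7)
The plan is to set up and apply the Technical Lemma \ref{l: technical_lemma}. First, I would reduce to the case where $G$ admits no nontrivial direct product decomposition: if $G = G_1 \times G_2$ is such a decomposition, Lemma \ref{l: DPab_in_direct_products} transfers undecidability from either factor, so iterating and keeping a non-abelian factor at each step (which exists since $G$ is non-abelian) we may assume that $G = G\Gamma$ is non-abelian and is not a nontrivial direct product. By Lemma \ref{lem:two_modules}, the graph $\Gamma$ then admits two distinct weak modules $S_1, S_2$ with no edge between them.

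Next I would pass to the PE-definable subgroup $G_{diag, S_1, S_2}$ of Lemma \ref{lem:ap_elements}(1). The structure $(G_{diag, S_1, S_2}, \ab)$ is PE-interpretable in $(G,\ab)$: the underlying set is PE-definable by Lemma \ref{lem:ap_elements}(1), the group operation and equality are inherited, and the $\ab$-relation on the subgroup is simply the restriction of that on $G$. By Proposition \ref{prop:DPred} it suffices to show $\mc{DP}(G_{diag, S_1, S_2}, \ab)$ is undecidable. To apply Lemma \ref{l: technical_lemma} with $s_1 = h_1 := \prod_{v\in S_1} v$ and $s_2 = h_2 := \prod_{v\in S_2} v$, note that $h_1, h_2$ are \ap \ in $G_{diag, S_1, S_2}$ by Lemma \ref{lem:ap_elements}(2), and the binary relations $R_{h_1}, R_{h_2}, R_{h_1, h_2}$ are PE-definable in $(G_{diag, S_1, S_2}, \ab)$ by Lemma \ref{lem:ap_elements}(3). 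This furnishes condition (1) of the Technical Lemma.

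The main obstacle is condition (2): for each $g \in K_{h_1} \leq G_{diag, S_1, S_2}$, I must produce a conjugator $h_g$ and a uniform disjunction $\bigvee_i \Sigma_i(z, x, \mathbf{y})$ whose solution set upon substituting $z = g$ equals $h_g^{-1}\{(h_1 h_2^{|g|_{h_2}})^t : t \in \mbb{Z}\} K_{h_1, h_2}\, h_g$. The plan is to take $h_g$ to be an element of $G$ that conjugates $h_1 g$ to cyclically reduced form, and to PE-define the target as $C_{G_{diag, S_1, S_2}}(h_1 g) \cdot K_{h_1, h_2}$: the centraliser is uniformly PE-definable via the equation $x(h_1 g) = (h_1 g)x$ together with membership in $G_{diag, S_1, S_2}$, and $K_{h_1, h_2}$ is PE-definable using condition (1). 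Two geometric facts should make the resulting set match the target. First, $|h_1 g|_{h_1} = 1$ (since $|g|_{h_1}=0$) forces the root of $h_1 g$ to have $h_1$-exponent $\pm 1$, which pins down the cyclic factor of the centraliser modulo $K_{h_1, h_2}$. Second, the no-edge assumption between $S_1$ and $S_2$ forces $\textrm{link}$ of the cyclically reduced conjugate of $h_1 g$ to lie in $V\Gamma \setminus (S_1\cup S_2)\subseteq K_{h_1, h_2}$, so the link factor of the centraliser gets absorbed once we multiply by $K_{h_1, h_2}$. Applying Theorem \ref{t: centralizers_RAAGS}, the centraliser should then take the form $h_g^{-1}\langle h_1 h_2^{|g|_{h_2}}\rangle h_g \cdot \langle L\rangle$ with $\langle L\rangle \subseteq K_{h_1, h_2}$, and so the product $C_{G_{diag, S_1, S_2}}(h_1 g)\cdot K_{h_1, h_2}$ equals the required coset-union.

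The hardest step will be this centraliser analysis: verifying, for every $g \in K_{h_1}$ (not merely pure powers of $h_2$), that the block decomposition of the cyclically reduced conjugate of $h_1 g$ produces a single ``diagonal'' cyclic factor whose class modulo $K_{h_1, h_2}$ is generated by $h_1 h_2^{|g|_{h_2}}$. Once condition (2) is established, Lemma \ref{l: technical_lemma} yields undecidability of $\mc{DP}(G_{diag, S_1, S_2}, \ab)$, and hence of $\mc{DP}(G, \ab)$ by the PE-interpretation from the second paragraph, completing the proof.
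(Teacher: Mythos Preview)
Your proposal is correct and follows essentially the same approach as the paper: reduce to the directly indecomposable case via Lemma \ref{l: DPab_in_direct_products}, pass to the PE-definable subgroup $G_{diag,S_1,S_2}$ using Lemma \ref{lem:ap_elements}, verify condition~(1) of Lemma \ref{l: technical_lemma} from Lemma \ref{lem:ap_elements}(3), and establish condition~(2) by showing $C_{G_{diag,S_1,S_2}}(h_1 g)\,K_{h_1,h_2}$ is the required conjugated coset-union via the block decomposition of a cyclically reduced conjugate of $h_1 g$. The paper carries out exactly this centraliser analysis (splitting into the case where some vertex of $\mathrm{supp}((h_1 g)^{h_g})\setminus S_1$ is non-adjacent to $S_1$, versus the case where the support is a clique), confirming that the step you flag as hardest is indeed where the work lies.
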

\begin{proof}

	We proceed by induction on the number $n$ of vertices of $\Gamma$. The base case $n=2$ only allows for $G$ to be a free group, and by Theorem \ref{thm:free_gp} $\mc{DP}(G, \ab)$ is undecidable in this case. 
	Assume $n>2$ and $G$ does not decompose as a nontrivial direct product; then $\Gamma$ has two distinct  weak modules  $S_1, S_2$ such that no vertex from $S_1$ is adjacent to a vertex from $S_2$ by Lemma \ref{lem:two_modules}. 
	
	We will apply Lemma \ref{l: technical_lemma} to the subgroup $G_{diag, S_1, S_2}$ and the \ap \ elements $h_1, h_2$, where $h_1=\prod_{w\in S_1}w$,  $h_2=\prod_{w\in S_2}w$. By Proposition \ref{prop:DPred} (on reduction of Diophantine problems) the undecidability of $\mc{DP}(G_{diag, S_1, S_2}, \ab)$ will imply the undecidability of $\mc{DP}(G, \ab)$ since $G_{diag, S_1, S_2}$ is PE-definable in $(G, \ab)$ (Lemma \ref{lem:ap_elements}). 

Condition (1) of Lemma \ref{l: technical_lemma} is satisfied because the relations $R_{h_1}, R_{h_2}$, and $R_{h_1, h_2}$ are PE-definable in $(G_{diag, S_1, S_2}, \ab)$ by Lemma \ref{lem:ap_elements}(3).  
		We next establish that condition (2) of Lemma \ref{l: technical_lemma} is satisfied. To simplify the notation we use $G_d$ instead of $G_{diag, S_1, S_2}$ throughout the rest of the proof.
	Let $K_{h_i} \leq G_d$ be the subgroup consisting of those $g\in G_{d}$ such that $|g|_{h_i}=0$ ($i=1,2$), and let $K_{h_1, h_2} = K_{h_1}\cap K_{h_2}$.  Observe that these subgroups are all PE-definable in $(G_d, \ab)$ because the relations $R_{h_i}$ are. 
	In order to apply Lemma \ref{l: technical_lemma} to $G_d$ it remains to see that there exists a disjunction of systems of equations $\bigvee_{i=1}^{m}\Sigma_i(z, x, y_1, \dots, y_n)$ on variables $z,x,y_1, \dots, y_n$, such that  for any $b\in K_{h_1}$ there exists $h_b\in G_d$ so that  $\bigvee_{i=1}^{m}\Sigma_i(b, x, y_1, \dots, y_n)$ 
	PE-defines the set $h_b^{-1}\{( h_1h_2^{|b|_{h_2}})^t\mid t\in \mbb{Z}\}K_{h_1,h_2}h_b$ in $(G_d, \ab)$.	To prove this, we will  show that 
	\begin{equation}\label{set_eq}
	C_{G_d}(h_1b)K_{h_1,h_2} = h_b^{-1}\{( h_1h_2^{|b|_{h_2}})^t\mid t\in \mbb{Z}\}K_{h_1,h_2}h_b,
	\end{equation}
	for a certain $h_b$. That is, modulo $K_{h_1, h_2}$, the centraliser of $h_1b$ is a conjugate of the set $\{( h_1h_2^{|b|_{h_2}})^t\mid t\in \mbb{Z}\}$. Once this is proved, we can take the expression $$x = x_1x_2 \wedge [x_1, h_1b]=1 \wedge x_2\in K_{h_1,h_2}$$ and rewrite it as a disjunction of system of equations (for this, we replace $x_2\in K_{h_1,h_2}$ for the disjunction of systems of equations that PE-define $K_{h_1,h_2}$ in $(G_d, \ab)$, obtaining a positive existential formula; then  we convert this formula into an equivalent disjunction of systems of equations).

	Let $b\in K_{h_1}$, and let $h_b\in G_d$ be such that $h_b^{-1}(h_1b)h_b= (h_1b)^{h_b}$ is cyclically reduced. Let $b_1^{n_1} \dots b_r^{n_r}$  be the block decomposition of $(h_1b)^{h_b}$. By Theorem \ref{t: centralizers_RAAGS}  
	\begin{align*}
	C_{G_d}(h_1b) &= C_G(h_1b) \cap  G_d = (C_{G}(b_1^{n_1} \dots b_r^{n_r}))^{h_b^{-1}} \cap G_d\\ &=\left(\prod_{i=1}^{r }\langle b_i^{n_i}\rangle_G \times \langle link( b_1 \dots b_r) \rangle_G\right)^{h_b^{-1}} \cap G_d,
	\end{align*}
	where by $\langle \cdot \rangle_G$ we mean generation in the group $G$ (as opposed to $G_d$).

	Since $b\in K_{h_1} \leq G_d$, no vertex from $S_1$ belongs to the support of $b$, so $|(h_1b)^{h_b}|_{h_1}=1$ by the additivity of $|\cdot|_{h_1}$. Moreover, by the definition of weak module, a vertex $u\in V\Gamma$  is adjacent to a vertex $v\in S_1$ if and only if $u$ is adjacent to all vertices of $S_1$. This means that one of the two following scenarios hold: 
	\begin{enumerate}
	\item[(i)] If $supp((h_1b)^{h_b})$ contains a vertex not in $S_1$ and not adjacent to any vertex $u$ in $S_1$, then all vertices from $S_1$ appear in the same block, say $b_1$, together with the vertex $u$. In particular, if some vertices of $S_2$ belong to $supp((h_1b)^{h_b})$, then they appear only in the block $b_1$ (because by assumption no vertex from $S_2$ is adjacent to a vertex from $S_1$). 
	\item[(ii)] If, on the contrary, all vertices not in $S_1$ in the support of $(h_1b)^{h_b}$ commute with some  vertex of $S_1$, then the support of $(h_1b)^{h_b}$ forms a clique in $\Gamma$ and $S_2 \cap supp((h_1b)^{h_b})=\emptyset$. 
	\end{enumerate}
	
	In the first case (Item (i)) we can write $b_1 = h_1 h_2^{|b_1|_{h_2}}k$ for some $k\in K_{h_1, h_2}$ 
	(this is because  we can project $b_1$ onto the abelianisation $G/G'$, order the  $h_1, h_2$'s towards the left, and ``pull'' the resulting expression back into $G_d$ by adding a commutator element, obtaining $b_1 = h_1 h_2^{|b_1|_{h_2}} r c$, where $r\in K_{h_1, h_2}$ and $c\in G'$; now since $G' \leq K_{h_1, h_2}$ we obtain the desired expression). 
	This way $(h_1b)^{h_b} = (h_1h_2^{|b_1|_{h_2}}k)^{n_1} b_2^{n_2} \cdots b_r^{n_r}$. Note that $n_1 =1$ because $|h_1b|_{h_1}=1$. Since all vertices from $S_2$ belonging to $supp((h_1b)^{h_b})$ must appear in the first block, we have  $b_2^{n_2} \cdots b_r^{n_r}\in K_{h_1, h_2}$(\textdagger). From this  it follows that $C_{G}(h_1 b) \leq G_d$, and so  $C_{G_d}(h_1 b) = C_{G}(h_1 b)$. Temporarily writing $L=\langle link( b_1 \dots b_r)\rangle_G$ we obtain
	\begin{align*}
	C_{G_d}(h_1b) K_{h_1,h_2}  &= C_{G}(h_1b) K_{h_1,h_2} \\
	&\overset{*}= \left(\left(\prod_{i=1}^{r }\langle b_i^{n_i}\rangle_G \times L\right) K_{h_1,h_2}\right)^{h_b^{-1}} \\ &= \left(\prod_{i=1}^{r }\langle b_i^{n_i}\rangle_G L K_{h_1, h_2} \right)^{h_b^{-1}} \\&\overset{**}= \left(\langle b_1\rangle K_{h_1,h_2} \right)^{h_b^{-1}} \\ &=  \left(\langle h_1h_2^{|b_1|_{h_2}}k\rangle K_{h_1,h_2}\right)^{h_b^{-1}} = \left(\langle h_1 h_2^{|b_1|_{h_2}} \rangle K_{h_1,h_2}\right)^{h_b^{-1}},
	\end{align*}
	where for $\overset{*}=$ we used the normality of $K_{h_1,h_2}$ to move the conjugator $h_b^{-1}$ outside the expression, and for $\overset{**}=$ we used (\textdagger), plus the fact that no vertex from $S_1$ is adjacent to any vertex from $S_2$, which implies $LK_{h_1, h_2} = K_{h_1, h_2}$.
	This completes the Item (i). 
	
	In the second case (Item (ii)) each vertex in $supp((h_1b_1)^{h_b})$ is its own block. In this case, letting $U:=supp((h_1b_1)^{h_b})\setminus (S_1\cup S_2) = supp((h_1b)^{h_b}) \setminus S_1$ gives 
	%
	%
		\begin{align*}C_{G_d}(h_1b)K_{h_1, h_2}&= \left( \left(\prod_{v\in S_1} \langle v \rangle_G \prod_{u\in U} \langle u \rangle_G \times L \right)^{h_b^{-1}} \bigcap G_d\right) K_{h_1, h_2} \\&= \left(\prod_{v\in S_1} \langle v \rangle_G \prod_{u\in U} \langle u\rangle_G K_{h_1, h_2}\right)^{h_b^{-1}} \bigcap G_dK_{h_1, h_2} \\ &=\left(\prod_{v\in S_1} \langle v \rangle_GK_{h_1, h_2}\right)^{h_b^{-1}} \bigcap G_dK_{h_1, h_2} \\ &= h_b\langle h_1 \rangle_{G}K_{h_1, h_2}h_b^{-1},\end{align*}
	where the last equality follows from the aditivity of the maps $|\cdot|_{s_i}$, and where we have used again that $LK_{h_1, h_2} = K_{h_1, h_2}$. This completes the proof of Item (ii); moreover, we have proved that the second part of Lemma \ref{l: technical_lemma} holds for $G_d$.

	Finally, observe that $|b_1|_{h_2} = |b|_{h_2}$, which proves (\ref{set_eq}).
Hence Lemma  \ref{l: technical_lemma} applies and therefore $\mc{DP}(G_{diag, S_1, S_2}, \ab)$ is undecidable. Thus $\mc{DP}(G, \ab)$ is undecidable if $\Gamma$ has more than two vertices and $G$ does not decompose as a nontrivial direct product.
	
	In the case where $G$ decomposes as a nontrivial direct product, $G=G\Gamma \cong G\Delta_1 \times G\Delta_2$ for some graphs $\Delta_1, \Delta_2$, each with strictly fewer vertices than $\Gamma$. Since $G$ is assumed to not be free abelian, either $G\Delta_1$ or $G\Delta_2$ is not free abelian; now we can use the induction hypothesis to obtain that $\mc{DP}(G\Delta_1, \ab)$  or $\mc{DP}(G\Delta_2, \ab)$ is undecidable. Then $\mc{DP}(G, \ab)$ is undecidable by Lemma \ref{l: DPab_in_direct_products}.
\end{proof}

\section{Hyperbolic groups with first Betti number $\geq 2$}\label{sec:hyp}

In this section we study the problem $\mc{D}(G, \ab)$ when $G$ is a hyperbolic group. We show that if the abelianisation of $G$ has (torsion-free) rank $\geq 2$ (or equivalently, $b_1(G)\geq 2$) then this problem is undecidable. The case when the rank is $0$ (equivalently, $G$ has finite abelianisation) is covered in Section \ref{sec:fin_ab}. In the latter case  $\mc{D}(G, \ab)$  is decidable. 

The case when  $G$ has $b_1(G)=1$ remains an open problem.

\subsection{Preliminary results on PE-interpretability}

The following lemmas establish some basic results that will be needed  in the present section.
\begin{lemma}\label{l: f_i_interpret}
Let $H\leq K \leq G$ be subgroups of a group $G$. Suppose that $H$ has finite index in $K$ and $H$ is PE-interpretable in $G$. Then $K$ is PE-interpretable in $G$.
\end{lemma}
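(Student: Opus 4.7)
The plan is to lift the given PE-interpretation $\phi\colon D \twoheadrightarrow H$, with $D \subseteq G^n$ PE-definable, to an interpretation of $K$ by adjoining a finite coset tag. Set $t = [K:H]$ and fix left coset representatives $k_1, \ldots, k_t \in K$ so that $K = \bigsqcup_{i=1}^{t} k_i H$. I would define
\[
D' = \{k_1, \ldots, k_t\} \times D \subseteq G^{n+1}, \qquad \phi'(a, \mathbf{d}) = a \cdot \phi(\mathbf{d}),
\]
which is a bijection onto $K$, and then verify conditions (1)--(4) of Definition \ref{d: e-int}.

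The first two checks are routine. The set $D'$ is PE-definable in $G$ because the first-coordinate condition is the finite disjunction $\bigvee_{i=1}^{t} (a = k_i)$ of equations with constants from $G$, and $D$ is PE-definable by hypothesis. The preimage of equality in $K$ is PE-definable because the cosets $k_i H$ are pairwise disjoint, and so $\phi'(a_1, \mathbf{d}_1) = \phi'(a_2, \mathbf{d}_2)$ is equivalent to $(a_1 = a_2) \wedge (\phi(\mathbf{d}_1) =_H \phi(\mathbf{d}_2))$, which translates to the disjunction $\bigvee_{i} \bigl(a_1 = k_i \wedge a_2 = k_i \wedge E_H(\mathbf{d}_1, \mathbf{d}_2)\bigr)$, where $E_H$ is the PE-formula for $H$-equality furnished by the hypothesis.

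The main technical step is expressing the preimage of multiplication in $K$ as a PE-formula. Starting from $\phi'(a_1, \mathbf{d}_1) \phi'(a_2, \mathbf{d}_2) = \phi'(a_3, \mathbf{d}_3)$, I would case-split on the coset tags $a_j = k_{i_j}$, yielding $t^3$ subcases, each of which reads
\[
k_{i_1}\, \phi(\mathbf{d}_1)\, k_{i_2}\, \phi(\mathbf{d}_2) = k_{i_3}\, \phi(\mathbf{d}_3) \quad \text{in } G.
\]
The plan is to absorb the fixed constants $k_{i_j}$ by a secondary disjunction that records which coset of $H$ in $K$ the element $k_{i_2}^{-1} \phi(\mathbf{d}_1) k_{i_2}$ lies in; for each such extra tag I introduce an auxiliary variable $\mathbf{e} \in D$ representing the $H$-part of that conjugate. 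Combining this with the precomputed finite data $k_i k_j = k_{\sigma(i,j)} h(i,j)$ (with $\sigma(i,j) \in \{1,\ldots,t\}$ and $h(i,j) \in H$ fixed constants represented by chosen tuples $\mathbf{d}_{h(i,j)} \in D$), the equation collapses to a finite conjunction of equations among $\phi$-values, which is exactly the form handled by the PE-definable preimages of $H$-multiplication and $H$-equality.

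The main obstacle is precisely this bookkeeping when $H$ is not normal in $K$: a priori $k_i^{-1} \phi(\mathbf{d}) k_i$ need not lie in $H$, forcing the additional coset-tag described above, together with the auxiliary variable representing its $H$-component. Since each such tag ranges over only $t$ values and the whole construction remains uniformly finite, the resulting formula is a disjunction of systems of equations, as required. Assembling (1)--(3) shows that $\phi'$ is a PE-interpretation of $K$ in $G$, and condition (4) follows verbatim if one wishes to also carry over relations on $H$ to relations on $K$ by the same case analysis.
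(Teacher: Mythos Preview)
The paper's proof is two lines: pick coset representatives $k_1,\dots,k_t$ so that $K=\bigsqcup_i k_iH$; each coset $k_iH$ is PE-definable in $G$ (via $\exists h\,(h\in H\wedge x=k_ih)$), hence so is their finite union $K$, and since the group operation on $K$ is just the restriction of the ambient $G$-multiplication, $K$ is PE-interpretable in $G$ via the identity map. This works because throughout the paper ``$H$ is PE-interpretable in $G$'' for a \emph{subgroup} $H\le G$ is used in the sense that the interpreting map is the inclusion (the paper notes all interpretations have $n=1$, and every application---verbal subgroups of finite width, centralisers, $\langle g\rangle$---is of this type).

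Your proposal attempts the ostensibly more general case where $\phi:D\to H$ is an arbitrary interpreting map, and there the bookkeeping step has a genuine gap. You need to express, for each triple $(i_1,i_2,i_3)$, the relation
\[
k_{i_1}\,\phi(\mathbf d_1)\,k_{i_2}\,\phi(\mathbf d_2)\;=\;k_{i_3}\,\phi(\mathbf d_3)
\]
by a PE-formula in $G$ using only the data supplied by the interpretation of $H$: the PE-definition of $D$, and the PE-definitions of the preimages of $H$-equality and $H$-multiplication. But these give you relations \emph{among $\phi$-values only}; they do not let you compare a $\phi$-value against an arbitrary $G$-element, nor compute $k^{-1}\phi(\mathbf d)k$ for $k\notin H$. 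Your ``secondary disjunction'' recording the coset of $k_{i_2}^{-1}\phi(\mathbf d_1)k_{i_2}$ and the auxiliary $\mathbf e\in D$ for its $H$-part presupposes that the relation $k_{i_2}^{-1}\phi(\mathbf d_1)k_{i_2}=k_\ell\,\phi(\mathbf e)$ is itself PE-expressible---but this is a $G$-equation mixing $\phi$-values with external constants, not a combination of $H$-multiplication and $H$-equality, so the hypothesis does not supply it. (Also a minor slip: $\phi'$ is onto, not a bijection, since $\phi$ need not be injective.)

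If you specialise to $\phi=\mathrm{id}_H$, your construction does work, but then all the coset-tag machinery is superfluous: the multiplication condition is literally the $G$-equation $a_1d_1a_2d_2=a_3d_3$ together with the PE-constraints $a_j\in\{k_1,\dots,k_t\}$ and $d_j\in H$, and one recovers exactly the paper's short argument.
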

\begin{proof}
Let $k_1, \dots, k_t \in K$ be such that  $K$ is the disjoint union of the cosets  $k_1H, \dots, k_{t}H$. Each of these cosets is PE-interpretable in $G$, so $K$ is also PE-interpretable in $G$.
\end{proof}


Recall that a verbal subgroup of a group $G$ is a subgroup $H $ for which there exists a word $w(x_1, \dots, x_n) \in F(x_1, \dots, x_n)$ such that $H=\langle w(g_1, \dots, g_n) \mid g_1, \dots, g_n\in G \rangle$. Such group is said to have finite verbal width if there exists an integer $N$ such that all elements of $H$ can be written as a product of at most $N$ elements of the form $w(g_1, \dots, g_n)$ or  $w(g_1, \dots, g_n)^{-1}$, ($g_1, \dots, g_n\in G$). It is well-known \cite[Section 2.3]{GMO} that any subgroup $H$ of $G$ with finite verbal width is definable in $G$ by a single equation. In particular, it is PE-definable and PE-interpretable in $G$. We will  use this fact in the next lemma.

\begin{lemma}\label{l: fi_PE}
	Let $G$ be a virtually polycyclic group, and let $H$ be a finite index subgroup of $G$. Then $H$ is PE-interpretable in $G$.
\end{lemma}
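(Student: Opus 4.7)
My plan is to exhibit inside $H$ a subgroup $H' \leq H$ that is both PE-interpretable in $G$ and of finite index in $H$, and then invoke Lemma \ref{l: f_i_interpret}. The candidate will be a verbal subgroup $G^m := \langle g^m \mid g \in G \rangle$ of $G$ for a suitable $m$, since the observation recorded just before the statement of the lemma tells us that a subgroup of finite verbal width is PE-definable (indeed, definable by a single equation), hence PE-interpretable.

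To choose $m$, I would first pass to the normal core $N := \bigcap_{g\in G} gHg^{-1}$ of $H$ in $G$. This is normal of finite index $m := [G:N]$ in $G$ (with $m$ dividing $[G:H]!$), and is contained in $H$. Since $G/N$ has exponent dividing $m$, one has $g^m \in N$ for every $g \in G$, whence $G^m \leq N \leq H$. I would then verify that $G^m$ has finite index in $G$: the quotient $G/G^m$ is a finitely generated virtually polycyclic group of exponent dividing $m$, and a short induction along a subnormal series of $G$ with cyclic factors (tensored with a finite-index reduction) shows that any such group is finite. Therefore $[H:G^m]<\infty$, as required for the application of Lemma \ref{l: f_i_interpret}.

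The main obstacle, and the only genuinely nontrivial ingredient, is to show that $G^m$ has finite verbal width in $G$. For this I would appeal to the theorem of Segal that every verbal subgroup of a virtually polycyclic group has finite width. Once this is invoked, the remark preceding Lemma \ref{l: fi_PE} applies directly to $G^m$, so $G^m$ is PE-interpretable (in fact, PE-definable) in $G$. Feeding the chain $G^m \leq H \leq G$, together with the finite index $[H:G^m]<\infty$ and the PE-interpretability of $G^m$ in $G$, into Lemma \ref{l: f_i_interpret} then yields the desired PE-interpretability of $H$ in $G$.
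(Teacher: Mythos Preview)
Your proposal is correct and follows essentially the same route as the paper: both take the verbal subgroup $G^m$ generated by $m$-th powers, invoke Segal's theorem that virtually polycyclic groups are verbally elliptic to get PE-definability of $G^m$, show $G/G^m$ is finite by an induction along a subnormal series with cyclic factors, and finish with Lemma~\ref{l: f_i_interpret}. The only difference is that the paper sets $m=[G:H]$ directly, whereas you first pass to the normal core $N$ and take $m=[G:N]$; your detour makes the containment $G^m\leq H$ transparent (since $G/N$ has exponent dividing $m$), which the paper leaves implicit.
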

\begin{proof}

Let $s$ be the index of $H$ in $G$, and let $G^s$ be the verbal subgroup of $G$ given by the word $w(x)=x^s$, i.e.\ the subgroup generated by $\{g^s\mid g\in G\}$. 

Since $G$ is verbally elliptic (i.e.\ all verbal subgroups of $G$ have finite verbal width) \cite{Segal_book}, we have that $G^s$ is e-interpretable in $G$ \cite[Section 2.3]{GMO}. Note also that $G^s$ is normal in $G$ and that $A=G/G^s$ is a virtually polycyclic group that is periodic, i.e. with all elements of finite order
We claim that $A=G/G^s$ is finite, as follows. By Lemma \ref{l: f_i_interpret}, it is enough to assume that $A$ is polycyclic. Let $d$ be the length of a subnormal series $A=A_d > A_{d-1} > \dots >A_0=1$ of $A$ with cyclic factors. The statement is clear if $d=1$. If $d> 1$ then $A_{d-1}$ is finite by induction, and moreover $A/A_{d-1}$ is finite because $A$ is periodic, hence $A/A_{d-1}$ is also periodic, and because $A/A_{d-1}$ is cyclic. Hence $A$ is a finite-by-finite group, hence finite.

Thus $G^s$ has finite index in $G$, and therefore in $H$. Now the subgroups $G^s \leq H\leq G$ satisfy the hypotheses of Lemma \ref{l: f_i_interpret}. Hence $H$ is PE-interpretable in $G$. 
%
\end{proof}

\begin{lemma}\label{l: PE-interpret_powers}
Let $G$ be a group and let $g\in G$ be an element of infinite order. Suppose the centralizer of $g$ is virtually $\mathbb{Z}$. Then $\langle g\rangle \cong \mbb{Z}$ is PE-interpretable in $G$.
\end{lemma}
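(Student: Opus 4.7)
The plan is to combine two observations already available in the excerpt. First, the centralizer $C:=C_G(g)$ is PE-definable in $G$ by the single equation $xg=gx$ (see Example \ref{ex:central}), so $C$ is trivially PE-interpretable in $G$ via the identity map on $C$. Second, since $g\in C$ and we shall verify that $\langle g\rangle$ has finite index in $C$, the virtually polycyclic machinery of Lemma \ref{l: fi_PE} will yield that $\langle g\rangle$ is PE-interpretable in $C$. Composing these two interpretations (a standard consequence of the translation of PE-formulas along interpretations, as in the proof of Lemma \ref{equation_reduction}) will give that $\langle g\rangle$ is PE-interpretable in $G$.

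The only non-formal step is verifying that $[C:\langle g\rangle]<\infty$. By hypothesis there is a subgroup $H\leq C$ with $H\cong\mbb{Z}$ and $[C:H]<\infty$. Since $\langle g\rangle\leq C$, the subgroup $\langle g\rangle\cap H$ has finite index in $\langle g\rangle$. If $\langle g\rangle\cap H$ were trivial then $\langle g\rangle$ would embed into the finite quotient $C/H$, contradicting that $g$ has infinite order. Hence $\langle g\rangle\cap H$ is a nontrivial subgroup of the infinite cyclic group $H$, so it has finite index in $H$, and therefore in $C$. The chain
\[
[C:\langle g\rangle\cap H]=[C:\langle g\rangle]\cdot[\langle g\rangle:\langle g\rangle\cap H]<\infty
\]
then forces $[C:\langle g\rangle]<\infty$, as required.

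With this finite index in hand, note that $C$, being virtually $\mbb{Z}$, is in particular virtually polycyclic. Lemma \ref{l: fi_PE} therefore applies with $K=C$ and its finite-index subgroup $\langle g\rangle$, giving that $\langle g\rangle$ is PE-interpretable in $C$. Combining the PE-definition of $C$ in $G$ with this interpretation, we conclude that $\langle g\rangle\cong\mbb{Z}$ is PE-interpretable in $G$.

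The main (mild) obstacle is making the composition step precise, but this is a standard transitivity property of PE-interpretability: a PE-interpretation of $\langle g\rangle$ in $C$ is given by a PE-definable domain $D\subseteq C^n$ with PE-definable preimages of multiplication and equality in $C$, and each of the PE-formulas involved can be read off as a PE-formula in $G$ after relativising the quantifiers to the PE-definable set $C\subseteq G$. This is exactly the content underlying Lemma \ref{equation_reduction}, and it requires no new ideas beyond what has already been developed in Section \ref{sec:interpretability}.
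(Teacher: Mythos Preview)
Your proof is correct and follows essentially the same route as the paper's: show $\langle g\rangle$ has finite index in $C_G(g)$, apply Lemma~\ref{l: fi_PE} (using that $C_G(g)$ is virtually $\mathbb{Z}$, hence virtually polycyclic), and then pass from $C_G(g)$ to $G$ via the PE-definability of the centraliser. The paper's version is simply terser: it asserts the finite-index fact without the detailed subgroup-intersection argument you supply, and it does not spell out the transitivity step, but the logical structure is identical.
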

\begin{proof}
	Since $\langle g\rangle \cong \mbb{Z}$ and $C_G(g)$ is virtually infinite cyclic, $\langle g\rangle$ is a finite index subgroup of $C_G(g)$, and so $\langle g\rangle$ is  definable in $C_G(g)$ by positive existential formulas by Lemma \ref{l: fi_PE}.  Since  $C_G(g)$ is e-definable in $G$, we have that $\langle g\rangle$ is PE-definable in $G$.
\end{proof}

Recall that by $R_s$ we denote the binary relation satisfied by two elements $g,h$ such that $|g|_s=|h|_s$, for $s$ an abelian-primitive element.
\begin{lemma}\label{l: R1_R2}
	Let $G$ be a group with finite generating set $S$. Suppose all generators have infinite order, and virtually $\mathbb{Z}$ centralisers.
	Then for any \ap \ $s \in S$ the relation $R_{s}$ is PE-interpretable in $(G, \ab)$.


\end{lemma}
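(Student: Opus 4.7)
The plan is to express the condition $|g|_s=|h|_s$ as a PE-formula by producing explicit generators for the kernel of $|\cdot|_s$ at the level of the abelianisation, and then using the abelian predicate $\ab$ to detect membership. For each $t\in S\setminus\{s\}$, set $a_t:=|t|_s\in\mbb{Z}$ and $\sigma_t:=ts^{-a_t}\in G$. By the additivity of $|\cdot|_s$ (Lemma \ref{l: morphism}), $|\sigma_t|_s=0$, so $\ab(\sigma_t)$ lies in $K^{\ab}:=\ab(\ker|\cdot|_s)\le G^{\ab}$.

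The central algebraic fact is that $K^{\ab}=\langle\ab(\sigma_t):t\in S\setminus\{s\}\rangle$ inside $G^{\ab}$. This follows by writing any $z\in K^{\ab}$ as $\sum_{t\in S}c_t\ab(t)$; the condition $|z|_s=0$ forces $c_s=-\sum_{t\neq s}c_ta_t$, after which the expression collapses to $\sum_{t\neq s}c_t\ab(\sigma_t)$. Since $R_s(g,h)$ is equivalent to $\ab(gh^{-1})\in K^{\ab}$, it is then equivalent to the existence of $u_t\in\langle\sigma_t\rangle$ (one per $t\in S\setminus\{s\}$) such that $\ab(gh^{-1})=\ab\bigl(\prod_{t\neq s}u_t\bigr)$. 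Necessity uses $u_t:=\sigma_t^{c_t}$ for the coefficients produced above; sufficiency is immediate.

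To turn this characterisation into a PE-formula in $(G,\ab)$, I will PE-interpret each cyclic subgroup $\langle\sigma_t\rangle\le G$ separately. When $\sigma_t$ has finite order, $\langle\sigma_t\rangle$ is a finite set defined by a finite disjunction of equalities (and if $\sigma_t=1$, the subgroup is trivial). When $\sigma_t$ has infinite order, Lemma \ref{l: PE-interpret_powers} applies provided $C_G(\sigma_t)$ is virtually $\mbb{Z}$. This last step is where care is required: the hypothesis of the lemma asserts virtually $\mbb{Z}$ centralisers only for the generators in $S$, not for the derived elements $\sigma_t$. In the intended application context (hyperbolic groups) every infinite-order element of $G$ has virtually $\mbb{Z}$ centraliser, so the condition propagates automatically from generators to each $\sigma_t$; alternatively, one may enlarge $S$ by adjoining the $\sigma_t$, which still generates $G$ (since $t=\sigma_ts^{a_t}$), and verify the centraliser condition on the enlarged set. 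Once each $\langle\sigma_t\rangle$ is PE-interpreted, substituting these interpretations into the abelian equation $\ab(gh^{-1})=\ab\bigl(\prod_{t\neq s}u_t\bigr)$ yields the desired PE-definition of $R_s$ in $(G,\ab)$; the main obstacle is precisely the centraliser condition on the $\sigma_t$, which motivates the restriction to contexts such as hyperbolic groups.
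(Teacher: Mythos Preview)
Your argument is essentially the paper's, but you have introduced an unnecessary detour by working with $\sigma_t = t s^{-a_t}$ instead of $t$ itself. The point you are missing is that $a_t = |t|_s = 0$ for every $t \in S \setminus \{s\}$: by definition, $|t|_s$ is the $s$-exponent-sum of a word in $F(S)$ representing $t$ whose absolute $s$-exponent-sum is minimal, and the one-letter word $t \in F(S)$ already achieves the value $0$. Hence $\sigma_t = t$, your case split on the order of $\sigma_t$ collapses (all generators are assumed to have infinite order), and your concern about the centraliser of $\sigma_t$ evaporates --- the hypothesis of the lemma applies to $t$ directly.

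The paper's proof is exactly this simplified version: one observes that $|x|_s = 0$ if and only if there exists $y \in \prod_{t \in S \setminus \{s\}} \langle t \rangle$ with $\ab(x) = \ab(y)$, and then invokes Lemma~\ref{l: PE-interpret_powers} to PE-interpret each cyclic subgroup $\langle t \rangle$, which is licensed precisely by the standing hypothesis that every $t\in S$ has infinite order and virtually~$\mbb{Z}$ centraliser. No appeal to hyperbolicity and no enlargement of $S$ is needed.
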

\begin{proof}
Let $s \in S$ be \ap. By Lemma \ref{l: morphism}, two elements $g,h\in G$ satisfy $R_{s}(g,h)$ if and only if $|gh^{-1}|_s=0$. 
Hence it suffices to prove that the predicate $|x|_s=0$ for $x\in G$
is PE-interpretable in $(G,\ab)$. 

Note that an element $x\in G$  satisfies $|x|_s=0$ if and only if there exists $y\in \prod_{t\in S\setminus \{s\}} \langle t \rangle$ such that $\ab(x)=\ab(y)$. Since for every $t \in S$ the group $\langle t \rangle$ is PE-interpretable in $G$ by Lemma \ref{l: PE-interpret_powers} and $S$ is finite, we get that $\prod_{t\in S\setminus \{s\}} \langle t \rangle$ is PE-interpretable in $G$.  Hence, 
 $\{x\in G \mid |x|_s=0\}$ is PE-interpretable in $(G,\ab)$, and so is $R_s$. \end{proof}

\begin{lemma}\label{l: PE-define_relation}
Let $G$ and $S$ be as in Lemma \ref{l: R1_R2}. Let $s, t \in S$ be two \ap \ elements with $\langle \ab(s), \ab(t) \rangle \cong \mbb{Z}^2$, and assume the centraliser of $st$ is virtually $\mathbb{Z}$. 
	
	Then $R_{s,t}(\cdot,\cdot)$ is PE-interpretable in $(G, ab)$.
\end{lemma}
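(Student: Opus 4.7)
The plan is to PE-interpret $R_{s,t}$ in $(G, \ab)$ by mimicking the strategy used in Lemma~\ref{lem:ap_elements}(3) for RAAGs, namely by passing through the cyclic subgroup generated by the product $st$.  The crucial observation is that for any $z \in \langle st \rangle$, writing $z = (st)^k$, the homomorphism property of $|\cdot|_s$ and $|\cdot|_t$ (Lemma~\ref{l: morphism}) gives $|z|_s = k = |z|_t$.

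The characterisation I will use is the equivalence:
\[
R_{s,t}(g,h) \iff \exists z \in \langle st \rangle \text{ such that } R_s(g,z) \wedge R_t(h,z).
\]
The forward direction: if $|g|_s = |h|_t = k$, take $z = (st)^k$, which lies in $\langle st\rangle$ and satisfies $|z|_s = |z|_t = k$. The reverse direction: if such $z$ exists, then $|g|_s = |z|_s = |z|_t = |h|_t$, where the middle equality uses $z = (st)^k$ and the additivity of exponent-sum. Note the hypothesis $\langle \ab(s), \ab(t)\rangle \cong \mbb{Z}^2$ is used via Remark~\ref{l:product} to guarantee that $st$ has infinite order in $G$ and $\ab(st)$ has infinite order in $G^{\ab}$, so that $\langle st\rangle \cong \mbb{Z}$ and the powers $(st)^k$ are distinct.

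Next I verify that each ingredient in the characterisation is PE-definable in $(G, \ab)$. The subgroup $\langle st \rangle$ is PE-interpretable in $G$ by Lemma~\ref{l: PE-interpret_powers}, since $st$ has infinite order and $C_G(st)$ is virtually $\mbb{Z}$ by hypothesis. The relations $R_s$ and $R_t$ are PE-interpretable in $(G,\ab)$ by Lemma~\ref{l: R1_R2}. Combining these as a conjunction with a single existential quantifier over the PE-definable set $\langle st\rangle$ yields a positive existential formula (equivalently, a disjunction of systems of equations with $\ab$-constraints) defining $R_{s,t}$, which completes the PE-interpretation.

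I do not anticipate a genuine obstacle here: the whole argument is a short structural reduction to the previously established lemmas, and the role of $st$ as a ``diagonal generator'' of $\langle s\rangle \times \langle t\rangle$ in the abelianisation is precisely what makes the equivalence above work. The only point requiring mild care is ensuring that the class of PE-definable sets is closed under the operations used (conjunction of PE-definable relations, existential quantification over a PE-definable set), which is standard and was invoked in the RAAG argument without comment.
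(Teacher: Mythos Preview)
Your proposal is correct and follows essentially the same approach as the paper: both use the characterisation $R_{s,t}(g,h)\iff \exists z\in\langle st\rangle$ with $R_s(g,z)\wedge R_t(h,z)$, invoke Remark~\ref{l:product} for the infinite order of $st$, Lemma~\ref{l: PE-interpret_powers} to PE-define $\langle st\rangle$, and Lemma~\ref{l: R1_R2} to handle $R_s$ and $R_t$. Your write-up is slightly more explicit about the forward and reverse directions and about closure properties of PE-definability, but the argument is identical.
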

\begin{proof}
By Remark \ref{l:product} $st$ has infinite order. It follows from Lemma \ref{l: morphism} that $g,h\in G$ satisfy $R_{s,t}(\cdot,\cdot)$ if and only if there exists some $x \in \langle s t \rangle$ such that $ |g|_s = |x|_s$ and $|h|_t = |x|_t$.
	
	The result follows since the relations $|g|_s = |x|_s$ and $|h|_t = |x|_t$ are PE-interpretable in $(G,\ab)$ by Lemma \ref{l: R1_R2} and because  $\langle s t\rangle$ is PE-definable in $G$ by Lemma \ref{l: PE-interpret_powers}. 
\end{proof}

\subsection{Main results on hyperbolic groups}
We begin with some preliminary results.
In the proof of Lemma \ref{l: condition_2_for_hyperbolic} we will use two related, but not equivalent notions, those of \emph{primitive} and \emph{\ap} elements (the latter introduced in Definition \ref{def:ap}). A \emph{primitive} element in a hyperbolic group is one of infinite order that is not a non-trivial power of another element. This terminology is relatively standard in the literature, although sometimes `strongly primitive' is used instead of `primitive.'

\begin{remark}\label{rmk:ap&p}
An \ap \ element (see Definition \ref{def:ap}) is in fact always primitive: it clearly has infinite order, and it cannot be a non-trivial power because its abelianisation would not generate a direct factor of $G^{\ab}$. However, a primitive element is not always \ap, since its abelianisation might be of finite order.
\end{remark}

The following result is well-known (see, for example, \cite[Theorem 4.3]{Osin}). 
\begin{lemma}\label{l: centralizers_in_hyp_groups}
 Let $G$ be a hyperbolic group, and let $g\in G$ be an element of infinite order. Then the centraliser $C(g)$ is virtually $\mathbb{Z}$. If, additionally, $g$ is a primitive element, then the centraliser $C(g) = \langle g \rangle \times T_g$ for some finite group $T_g$.
\end{lemma}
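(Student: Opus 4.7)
The plan is to handle the two assertions separately. For the first, that $C(g)$ is virtually cyclic, I would appeal to a classical geometric argument: since $g$ has infinite order in the hyperbolic group $G$, it acts loxodromically on the Cayley graph of $G$ and admits a quasi-geodesic axis $\gamma$. Any $h \in C(g)$, by commuting with $g$, must preserve $\gamma$ up to bounded Hausdorff distance, and the standard thin-triangles argument shows that the stabiliser of a bi-infinite quasi-geodesic in a hyperbolic group is virtually cyclic. This is well documented (see e.g.\ Bridson--Haefliger, Coornaert--Delzant--Papadopoulos, or the cited Theorem 4.3 of Osin), so I would simply quote it.

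For the second assertion, assume $g$ is primitive. I would first refine the virtually-$\mbb{Z}$ conclusion to an orientable one: every $h\in C(g)$ satisfies $h g h^{-1}=g$ (not $g^{-1}$), so $h$ preserves the orientation of $\gamma$. This rules out the infinite-dihedral case among virtually cyclic groups, and one obtains a short exact sequence
\[
1 \longrightarrow T_g \longrightarrow C(g) \xrightarrow{\ \tau\ } \mbb{Z} \longrightarrow 1,
\]
where $\tau$ records the (signed, normalised) translation length along $\gamma$, and $T_g=\ker\tau$ is finite, characteristic, and equal to the torsion subgroup of $C(g)$.

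Next I would use primitivity to show $\tau(g)=\pm 1$, i.e.\ $g$ maps to a generator of $C(g)/T_g$. If instead $\tau(g)=m$ with $|m|\geq 2$, pick $h\in C(g)$ with $\tau(h)=1$ and write $g=h^{m}t$ for some $t\in T_g$. The commutation $hg=gh$ forces $[h,t]=1$, so $\langle h,t\rangle$ is abelian with $h$ of infinite order and $t$ of finite order. A short calculation in this abelian subgroup should extract a proper $m$-th root of $g$, contradicting primitivity. Once $\tau(g)=\pm 1$, the sequence splits through $g$, giving $C(g)=\langle g\rangle \ltimes T_g$; and since $T_g\subseteq C(g)$ commutes with $g$ by definition, the action is trivial and the semidirect product collapses to the direct product $C(g)=\langle g\rangle\times T_g$.

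The step I expect to be the main obstacle is extracting a proper root of $g$ from the expression $g=h^{m}t$. A naive use of the structure theorem for finitely generated abelian groups can fail when $\gcd(m,|t|)>1$, since then $g$ need not be a literal $m$-th power inside $\langle h\rangle\times\langle t\rangle$ even though it projects to $m\mbb{Z}$. The way around this is to work inside the maximal elementary subgroup $E(g)\supseteq C(g)$ of Olshanskii--Osin, whose orientable part $E^+(g)$ has the explicit form (finite) $\times$ (cyclic), and to choose the splitting element $h$ compatibly with this decomposition so that the torsion $t$ can be absorbed. Alternatively, one can replace $g$ by a primitive root in $E(g)$, verify that primitivity in $G$ forces this root to coincide (up to sign) with $g$, and then read off the direct product decomposition directly. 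Either route reduces the argument to the already-understood structure of elementary subgroups in hyperbolic groups.
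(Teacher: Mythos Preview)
The paper does not prove this lemma; it is quoted as well known with a reference to Osin. Your handling of the first assertion---appealing to the standard result that centralisers of infinite-order elements are virtually cyclic---is exactly what the paper does.

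For the second assertion you have correctly located a real obstruction, and it is in fact fatal: the claim is false as stated. Take $G=F_2\times\mathbb{Z}/2\mathbb{Z}$ with $F_2=\langle a,b\rangle$ free and $\mathbb{Z}/2\mathbb{Z}=\langle s\rangle$; this is hyperbolic. Set $g=a^{2}s$. Then $g$ has infinite order and is primitive: if $g=h^{k}$ with $k\ge 2$ then projecting to $F_2$ forces $k=2$ and the $F_2$-component of $h$ to be $a$, but then $h^{2}$ has trivial $s$-component. Yet $C_G(g)=\langle a\rangle\times\langle s\rangle\cong\mathbb{Z}\times\mathbb{Z}/2\mathbb{Z}$, in which $g$ corresponds to $(2,1)$; since every finite subgroup of $C_G(g)$ lies in $\{1,s\}$ while $\langle g\rangle$ has index $4$, no finite $T$ gives $C_G(g)=\langle g\rangle\times T$. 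This is precisely the $\gcd(m,|t|)>1$ phenomenon you flagged, with $m=|t|=2$, and your proposed remedies through the maximal elementary subgroup cannot help, since here $E(g)=C_G(g)$ already.

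What salvages the paper's downstream use is that the elements $s_1g$ to which the lemma is actually applied satisfy $|s_1g|_{s_1}=1$. Since $|\cdot|_{s_1}$ is a homomorphism to $\mathbb{Z}$ it vanishes on torsion, so writing $s_1g=h^{m}t$ as in your argument yields $1=m\,|h|_{s_1}$, forcing $m=\pm 1$; your splitting argument then goes through cleanly. Thus the second assertion holds for those particular elements, though not for arbitrary primitive ones.
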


The following lemma is essential for establishing Theorem \ref{thm:hyp}.

\begin{lemma}\label{lem:basis_hyp}
Let $G$ be a hyperbolic group. Then there exists a basis $S$ of $G$ consisting entirely of loxodromic (infinite-order) elements. 
\end{lemma}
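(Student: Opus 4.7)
The plan is to start with any finite generating set of $G$ and replace each torsion generator by its product with a high power of a fixed loxodromic element, obtaining a generating set of $G$ all of whose elements have infinite order. Throughout, I assume $G$ is infinite (otherwise the statement is vacuous; in any case this lemma is invoked only for hyperbolic groups with $b_1(G)\geq 2$, which are automatically infinite).

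Concretely, let $S_0=\{s_1,\dots,s_n\}$ be any finite generating set of $G$, and fix an element $g\in G$ of infinite order; such $g$ exists because every infinite hyperbolic group contains loxodromic elements. The key technical step is to show that for every $s\in G$, the element $sg^N$ has infinite order for all sufficiently large $N$. Granting this, for each torsion $s_i\in S_0$ I pick an integer $N_i$ with $s_ig^{N_i}$ of infinite order, and define
\[
S=\{g\}\cup\{\,s_i \ : \ s_i \text{ has infinite order}\,\}\cup\{\,s_ig^{N_i} \ : \ s_i \text{ has finite order}\,\}.
\]
This $S$ is finite, every element is loxodromic by construction, and $S$ generates $G$: each original $s_i$ is recovered as $(s_ig^{N_i})g^{-N_i}$, using that $g\in S$.

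The main obstacle is justifying that $sg^N$ has infinite order for $N$ large. My preferred route is via the stable translation length $\tau\colon G\to\mbb{R}_{\geq 0}$ on a hyperbolic group, which vanishes precisely on torsion elements. A standard argument using the quasi-axis of $g$ and the triangle inequality yields an estimate of the form $\tau(sg^N)\geq N\,\tau(g)-C(s)$ for some constant $C(s)$ depending only on $s$; since $\tau(g)>0$, this forces $sg^N$ to be loxodromic once $N$ is large enough. Alternatively, one can invoke the north--south dynamics of $g$ on the Gromov boundary $\partial G$: for large $N$ the isometry $sg^N$ sends a neighborhood of the repelling fixed point of $g$ into a neighborhood of its attracting fixed point, a ping-pong configuration which forces $sg^N$ to fix a pair of distinct points in $\partial G$ and hence to be loxodromic.
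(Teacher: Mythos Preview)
There is a genuine gap. Your key claim --- that for a fixed loxodromic $g$ and any $s\in G$, the product $sg^N$ has infinite order for all sufficiently large $N$ --- is false. In $D_\infty=\langle a,b\mid a^2=b^2=1\rangle$ (hyperbolic, being virtually $\mbb{Z}$), take $g=ab$ and $s=a$: then $(a(ab)^N)^2=a\cdot(ab)^Na\cdot(ab)^N=a\cdot a(ba)^N\cdot(ab)^N=1$, so $sg^N$ has order $2$ for every $N$. This also kills the asserted inequality $\tau(sg^N)\geq N\tau(g)-C(s)$: here the left side is identically $0$. Note that $D_\infty$ is infinite but elementary, and indeed the lemma itself fails for $D_\infty$ (the infinite-order elements form the index-$2$ translation subgroup and so cannot generate). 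The correct hypothesis is that $G$ be \emph{non-elementary}; $b_1(G)\geq 2$ does imply this, but ``infinite'' alone does not.

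Even for non-elementary $G$ your boundary-dynamics sketch is incomplete. North--south dynamics gives $g^N(\partial G\setminus V)\subset U$ for small neighbourhoods $V\ni g^-$, $U\ni g^+$, hence $sg^N(\partial G\setminus V)\subset sU$; for a genuine contraction forcing infinite order you need $sU\subset\partial G\setminus V$, i.e.\ $sg^+\neq g^-$. When $s$ swaps $g^+$ and $g^-$, the element $sg^N$ lies in the virtually cyclic setwise stabiliser of $\{g^\pm\}$ and reverses orientation, so it has finite order for \emph{every} $N$. In a non-elementary hyperbolic group one can always choose a loxodromic $g$ whose fixed-point pair is swapped by none of the finitely many torsion generators, after which your construction goes through --- but this choice requires an argument you do not supply.

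The paper sidesteps the issue by quoting the property $P_{\mathrm{naive}}$ of Arzhantseva--Minasyan: there is a loxodromic $g_0$ with $\langle x_i,g_0\rangle\cong\langle x_i\rangle\ast\langle g_0\rangle$ for every generator $x_i$, whence $g_0x_i$ has infinite order as an element of a free product lying in neither factor, and $\{g_0,g_0x_1,\dots,g_0x_n\}$ is the desired generating set. This packages exactly the careful choice of $g$ that your argument is missing.
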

\begin{proof}
Fix any finite generating set $X=\{x_1, \dots x_n\}$ of $G$. By \cite{AM2007}, hyperbolic groups have the property $P_{naive}$: for any finite set $F\subset G\setminus \{1\}$ and element $f\in F$ there exists an element $g_F\in G$ of infinite order such that for each $f\in F$ the subgroup $\langle f, g_F \rangle$, generated by $f$ and $g_F$, is canonically isomorphic to the free product $\langle f\rangle \ast \langle g_F\rangle$.

Let $g_0:=g_X$ be the infinite order element we can associate to the finite generating set $X$ to satisfy $P_{naive}$, and let $g_i=g_0x_i$, $i=1, \dots, n$ (we may instead let $g_i=x_i$ if $x_i$ has infinite order, and $g_i=g_0x_i$ otherwise). Since each $\langle x_i, g_0 \rangle \cong \langle x_i\rangle \ast \langle g_0\rangle$ is a free product, the element $g_i$ must have infinite order, as it does not belong to any of the factor groups. It is now easy to see that the set $S=\{g_0, g_1, \dots, g_n\}$ is a generating set consisting only of infinite order elements: we have already showed all $g_i$ have infinite order, and since we can write every initial generator $x_i \in X$ as 
$x_i=(g_0)^{-1}g_i$, we can express any element in $G$ in terms of the elements in $S$.
%
\end{proof}

Next, Lemma \ref{l: condition_2_for_hyperbolic} establishes condition (2) of Lemma \ref{l: technical_lemma} for hyperbolic groups. Recall that for any $\{s_1,\dots, s_n\} \subset G$, the set $K_{s_1, \dots, s_n}:=\{g \in G \mid |g|_{s_i}=0, \forall i=1,\dots, n\}$ is a normal subgroup of $G$ (see Lemma \ref{lem:Ks_def}). Furthermore, in the proof of Lemma \ref{l: condition_2_for_hyperbolic} we consider the supremum of the  finite subgroup sizes in $G$, given by \begin{equation}\label{eq:finite}
M=M(G):=\sup \{|F| \,\mid\, |F| < \infty, F \leq G \}.
\end{equation}
By \cite[III.$\Gamma$.Theorem 3.2]{BridsonHaefliger}, $M$ is finite.

\begin{lemma}\label{l: condition_2_for_hyperbolic}
 Let $G$ be a hyperbolic group with  generating set $S$ and suppose all generators in $S$ have infinite order. 
 Let $s_1, s_2 \in S$ be two \ap \ elements with $\langle \ab(s_1), \ab(s_2) \rangle \cong \mbb{Z}^2$.

  Then there exists a disjunction of systems of equations $$\bigvee_{i=1}^{m}\Sigma_i(z, x, y_1, \dots, y_n)$$ on variables $z,x,y_1, \dots, y_n$, such that  for all $g\in K_{s_1}$  the disjunction of systems of equations $ \bigvee_{i=1}^{m}\Sigma_i(g, x, y_1, \dots, y_n)$ PE-defines the set $$\{(s_1  s_2^{|g|_{s_2}})^t\mid t\in \mbb{Z}\} K_{s_1,s_2}$$ in $(G, \ab)$.
 %
\end{lemma}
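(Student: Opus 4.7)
The plan is to realise the target set $\{(s_1 s_2^{|g|_{s_2}})^t : t\in\mathbb{Z}\}\,K_{s_1,s_2}$ uniformly in $g$ as $C_G(s_1 g)\cdot K_{s_1,s_2}$, so that the defining formula becomes simply ``commute with $s_1 z$, times an element of $K_{s_1,s_2}$.'' First I would verify that for every $g\in K_{s_1}$ the element $\beta:=s_1 g$ is primitive in $G$. Its image $\ab(s_1)+\ab(g)$ projects onto the $\mathbb{Z}$-factor $\langle\ab(s_1)\rangle$ as the generator $\ab(s_1)$ (using $|g|_{s_1}=0$); hence $\beta$ has infinite order, and an equation $\beta=y^k$ forces $k\mid 1$ after this projection, so $\beta$ is not a proper power. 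By Lemma \ref{l: centralizers_in_hyp_groups}, $C_G(\beta)=\langle\beta\rangle\times T_\beta$ with $T_\beta$ finite.

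Next I would absorb the torsion part $T_\beta$ into $K_{s_1,s_2}$. Any finite-order $\tau$ with $\tau^n=1$ satisfies $n|\tau|_{s_i}=|\tau^n|_{s_i}=0$, hence $|\tau|_{s_i}=0$ for $i=1,2$, so $\tau\in K_{s_1,s_2}$. Thus $C_G(\beta)\cdot K_{s_1,s_2}=\langle\beta\rangle\cdot K_{s_1,s_2}$. I would then establish the ``change of representative'' identity
\[
\langle s_1 g\rangle\, K_{s_1,s_2}\;=\;\langle s_1 s_2^{|g|_{s_2}}\rangle\, K_{s_1,s_2}.
\]
The quotient $G/K_{s_1,s_2}$ is abelian because $G'\subseteq K_{s_1,s_2}$ (every \ap\ generator vanishes on commutators), and $g$ and $s_2^{|g|_{s_2}}$ have equal image in this quotient since $g^{-1}s_2^{|g|_{s_2}}\in K_{s_1,s_2}$ by additivity of $|\cdot|_{s_1},|\cdot|_{s_2}$; hence their $s_1$-translates generate the same cyclic image and therefore yield the same full preimage.

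Finally, PE-definability of $K_{s_1,s_2}$ in $(G,\ab)$ follows from Lemma \ref{l: R1_R2} applied to $s_1$ and $s_2$, whose hypothesis (virtually $\mathbb{Z}$ centralisers of infinite-order generators) holds in the hyperbolic setting by Lemma \ref{l: centralizers_in_hyp_groups}. Writing $\Phi(k,\mathbf{y})$ for a disjunction of systems of equations that PE-defines $K_{s_1,s_2}$ in $(G,\ab)$, I would take
\[
\Sigma(z,x,u,k,\mathbf{y})\;:=\;\bigl(u\cdot(s_1 z)=(s_1 z)\cdot u\bigr)\;\wedge\;(x=u\cdot k)\;\wedge\;\Phi(k,\mathbf{y})
\]
and distribute $\Phi$ to obtain the required disjunction $\bigvee_i\Sigma_i$. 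For $z=g\in K_{s_1}$, the set of $x$'s realised by solutions is exactly $C_G(s_1 g)\cdot K_{s_1,s_2}=\langle s_1 s_2^{|g|_{s_2}}\rangle\,K_{s_1,s_2}$.

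The main obstacle I anticipate is the management of the finite torsion piece $T_\beta$ of $C_G(\beta)$: without the observation that torsion is swallowed by $K_{s_1,s_2}$, one could not identify $C_G(\beta)\cdot K_{s_1,s_2}$ with the clean cyclic-times-kernel set on the right-hand side. A related delicate point is the primitivity of $s_1 g$, which is what lets Lemma \ref{l: centralizers_in_hyp_groups} produce the sharp $\langle s_1 g\rangle\times T_\beta$ decomposition; here the hypothesis $g\in K_{s_1}$ is used essentially, since it forces the $\ab(s_1)$-coordinate of $\ab(s_1 g)$ to be a $\mathbb{Z}$-generator and therefore indivisible.
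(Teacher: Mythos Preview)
Your argument is correct and in fact streamlines the paper's proof. Both proofs begin identically: show that $s_1g$ is primitive via the projection onto $\langle\ab(s_1)\rangle$, invoke $C_G(s_1g)=\langle s_1g\rangle\times T_{s_1g}$, and establish the change of representative $\langle s_1g\rangle K_{s_1,s_2}=\langle s_1 s_2^{|g|_{s_2}}\rangle K_{s_1,s_2}$ using $G'\le K_{s_1,s_2}$ and normality.

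The difference lies in how the finite factor $T_{s_1g}$ is eliminated. The paper first isolates $\langle s_1g\rangle$ inside $C_G(s_1g)$ by writing $\langle s_1g\rangle=\bigcup_{i}(s_1g)^i\,C_G(s_1g)^{|T_{s_1g}|}$ and then invokes the uniform bound $M$ on orders of finite subgroups of a hyperbolic group to make the index set of the disjunction independent of $g$; only afterwards does it multiply by $K_{s_1,s_2}$. You instead observe that every torsion element satisfies $|\tau|_{s_i}=0$ (since $|\cdot|_{s_i}:G\to\mathbb{Z}$ is a homomorphism), so $T_{s_1g}\subseteq K_{s_1,s_2}$ and hence $C_G(s_1g)\,K_{s_1,s_2}=\langle s_1g\rangle\,K_{s_1,s_2}$ immediately. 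This lets you write the defining formula as a single system $[u,s_1z]=1\wedge x=uk\wedge\Phi(k,\mathbf{y})$ (modulo distributing $\Phi$), avoiding both the coset decomposition and the appeal to the finiteness of $M$. Your route is shorter and has the pleasant side effect that the exponent $|T_{s_1g}|$, which in the paper's displayed formula still depends on $g$ and would need to be replaced by something like $M!$ to obtain a genuinely $g$-uniform system, never appears at all.
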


\begin{proof}
Fix $g\in K_{s_1}$, that is, $|g|_{s_1}=0$. We claim that $s_1g$ is a primitive element. Indeed, the projection of $\ab(s_1g)$ onto $\langle\ab(s_1)\rangle$ is $\ab(s_1)$, which is of infinite order because $s_1$ is \ap. So $\ab(s_1g)$, and thus $s_1g$, has infinite order. Moreover, if $s_1g$ were a non-trivial power of another element, then the projection of $\ab(s_1g)$ onto $\langle \ab(s_1)\rangle$ would be a non-trivial power. But this projection is $\ab(s_1)$, as discussed above; since $s_1$ is \ap, $s_1$ must be primitive by Remark \ref{rmk:ap&p}, so $\ab(s_1)$ cannot be a power, and neither can $s_1g$. This proves the claim.

Since $s_1g$ is a primitive element, Lemma \ref{l: centralizers_in_hyp_groups} implies that $C(s_1g) = \langle s_1g \rangle \times T_{s_1g}$ for some finite subgroup $T_{s_1g}$. Note that $C(s_1 g)^{|T_{s_1g}|} = \langle (s_1g)^{|T_{s_1g}|}\rangle$ and
$$
\langle s_1g \rangle = \bigcup_{i= 0}^{|T_{s_1g}|-1} (s_1g)^i \langle (s_1g)^{|T_g|}\rangle = \bigcup_{i= 0}^{|T_{s_1g}|-1} (s_1g)^i C(s_1g)^{|T_{s_1g}|}.
$$
Hence $\langle s_1g \rangle$ is PE-defined in $G$ by the following disjunction of systems of equations on variables $x, z$:
$$
\bigvee_{i=0}^{|T_{s_1g}|-1} \left((x=(s_1g)^iz^{|T_{s_1g}|}) \wedge ([z,s_1g]=1)\right).
$$
Let $M$ be as in (\ref{eq:finite}). Then $M\geq |T_{s_1g}|$, and the following disjunction of equations still PE-defines $\langle s_1g \rangle$ in $G$:
$$
\bigvee_{i=0}^{M} \left((x=(s_1g)^iz^{|T_{s_1g}|}) \wedge ([z,s_1g]=1)\right).
$$

 Since $G'\leq K_{s}$ for any \ap\ element $s$, for each $g\in K_{s_1}$ we can write a unique expression of the form $g = s_2^{|g|_{s_2}} r$ for some $r\in K_{s_1,s_2}$ (indeed, we can project $g$ onto $G/G'$, arrange all generators together, and then ``pull back''  $g$ into $G$ by adding a commutator, which belongs to  $K_{s_1,s_2}$, to the right-hand side).
 Now we have
 $$
  \{(s_1 g)^t \mid t\in \mbb{Z}\} = \{(s_1 s_2^{|g|_{s_2}}r)^t \mid t\in \mbb{Z}\}.
 $$
 And so, since $K_{s_1, s_2}$ is normal,
  $$
  \{(s_1 g)^t \mid t\in \mbb{Z}\}K_{s_1,s_2} = \{(s_1 s_2^{|g|_{s_2}})^t \mid t\in \mbb{Z}\}K_{s_1,s_2}.
 $$
%
By Lemma \ref{l: R1_R2}, since both $s_1, s_2$ are  \ap\ elements and all generators of $G$ are assumed to have infinite order (and all centralisers of primitive elements in a hyperbolic group are virtually $\mbb{Z}$), it follows that both relations $R_{s_1}, R_{s_2}$ are PE-interpretable in $(G, \ab)$. Furthermore, Lemma \ref{l: PE-define_relation} implies that the relation $R_{s_1, s_2}$ is PE-interpretable in $(G,\ab)$ since $s_1s_2$ has infinite order (this is a particular case of the first claim in the present proof) and virtually $\mbb{Z}$ centraliser. This shows that $K_{s_1,s_2}$ is PE-definable in $(G, \ab)$.

 Let $\Phi(w, w_1, \dots, w_m)$ be a disjunction of systems of equations PE-defining  $K_{s_1,s_2}$ in $G$, so that $g\in G$ belongs to $K_{s_1, s_2}$ if and only if $\Phi(g, w_1, \dots, w_m)$ has a solution on the variables $w_1,\dots, w_m$. Then the following formula, once written in an equivalent form of a disjunction of systems of equations, PE-defines $\{(s_1 s_2^{|g|_{s_2}})^t \mid t\in \mbb{Z}\}K_{s_1,s_2}$ in $(G, \ab)$: 
$$
\left[\bigvee_{i=0}^{N} \left((x=(s_1g)^iz^{|T_{s_1 g}|} w) \wedge ([z,s_1g]=1)\right)\right] \wedge \Phi(w, w_1, \dots, w_m).
$$
%
\end{proof}

\begin{theorem}\label{thm:hyp}
Let $G$ be a hyperbolic group with infinite abelianisation of torsion-free rank $\geq 2$. Then $\mc{DP}(G, ab)$ is undecidable.
\end{theorem}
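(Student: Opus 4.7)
The plan is to apply the general Technical Lemma \ref{l: technical_lemma} to $G$ after choosing an appropriate finite generating set $S$ together with two distinguished \ap\ generators $s_1, s_2 \in S$ satisfying $\langle \ab(s_1), \ab(s_2)\rangle \cong \mbb{Z}^2$; granting this, the Technical Lemma delivers undecidability of $\mc{DP}(G,\ab)$ immediately. All of the preceding lemmas in this section have been engineered precisely to feed into such an application, so the argument reduces to assembling them in the right order.

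First I would produce the generating set $S$ so that (i) every element of $S$ has infinite order in $G$; (ii) $s_1, s_2 \in S$ are both \ap; and (iii) $\langle \ab(s_1), \ab(s_2)\rangle \cong \mbb{Z}^2$ inside $G^{\ab}$. Since the torsion-free rank of $G^{\ab}$ is at least two, I can decompose $G^{\ab} = \langle e_1\rangle \oplus \langle e_2\rangle \oplus A$ with $\langle e_i\rangle \cong \mbb{Z}$ and lift $e_1, e_2$ to elements $v_1, v_2 \in G$; by Definition \ref{def:ap} the $v_i$ are \ap, and in particular of infinite order. I would then start from any finite generating set of $G$, adjoin $v_1, v_2$, and apply Lemma \ref{lem:basis_hyp} to replace the finite-order generators by loxodromic ones without perturbing the already infinite-order generators (in particular $v_1, v_2$). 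Setting $s_1 := v_1$ and $s_2 := v_2$ yields the required $S$.

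It remains to verify the two hypotheses of Lemma \ref{l: technical_lemma}. For condition (1), Lemma \ref{l: centralizers_in_hyp_groups} ensures that every infinite-order element of a hyperbolic group has virtually cyclic centraliser, so Lemma \ref{l: R1_R2} directly gives PE-definability of $R_{s_1}$ and $R_{s_2}$ in $(G,\ab)$; for $R_{s_1,s_2}$, Lemma \ref{l: PE-define_relation} applies, since $s_1 s_2$ has infinite order by Remark \ref{l:product} (using (iii)) and again has virtually cyclic centraliser. For condition (2), Lemma \ref{l: condition_2_for_hyperbolic} delivers exactly a disjunction of systems PE-defining $\{(s_1 s_2^{|g|_{s_2}})^t : t \in \mbb{Z}\} K_{s_1,s_2}$ for every $g \in K_{s_1}$, which matches the set in condition (2) of the Technical Lemma with the trivial conjugator $h_g = 1_G$ (this is precisely the simplification relative to the RAAG case flagged in the discussion preceding Lemma \ref{l: technical_lemma}). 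Invoking Lemma \ref{l: technical_lemma} finishes the proof. The only genuinely non-routine step, in my view, is the initial choice of $S$: one must ensure simultaneously that two \ap\ generators witnessing the $\mbb{Z}^2$-direct-factor of $G^{\ab}$ sit inside a generating set all of whose elements have infinite order, so that the hyperbolic centraliser control of Lemma \ref{l: centralizers_in_hyp_groups} feeds uniformly into Lemmas \ref{l: R1_R2} and \ref{l: PE-define_relation}; everything beyond this is bookkeeping.
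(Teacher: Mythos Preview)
Your proposal is correct and follows essentially the same route as the paper: assemble a finite generating set of infinite-order elements containing two \ap\ generators $s_1, s_2$ with $\langle \ab(s_1), \ab(s_2)\rangle \cong \mbb{Z}^2$, then verify the two hypotheses of Lemma \ref{l: technical_lemma} via Lemmas \ref{l: R1_R2}, \ref{l: PE-define_relation}, and \ref{l: condition_2_for_hyperbolic} (with trivial conjugator $h_g$). The only cosmetic difference is the order in which the generating set is built---the paper applies Lemma \ref{lem:basis_hyp} first and then adjoins the \ap\ elements $s_1, s_2$ (automatically of infinite order), which avoids appealing to the proof rather than the statement of Lemma \ref{lem:basis_hyp}.
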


\begin{proof}
By Lemma  \ref{lem:basis_hyp} we can find a basis $S$ of $G$ with all generators of infinite order. Moreover, suppose that $s_1, s_2$ are two elements in $G$ such that their abelianisation generates a free rank-$2$ group in the abelianisation $G^{\ab}$, that is, $\langle \ab(s_1), \ab(s_2) \rangle \cong \mbb{Z}^2$. Such elements exist because $b_1(G)\geq 2$ by hypothesis. 
Add these two elements to the generating set $S$ and consider the new generating set $X=S \cup \{s_1,s_2\}$. 


Finally we apply Lemma \ref{l: technical_lemma} to $G$. By Lemma \ref{l: centralizers_in_hyp_groups} all the centralisers of the generators in the set $X$ are virtually $\mathbb{Z}$, so we can apply Lemmas \ref{l: R1_R2} and \ref{l: PE-define_relation} to the abelian - primitive elements $s_1, s_2$; these imply that $R_{s_1}, R_{s_2}, R_{s_1, s_2}$ are PE-interpretable in $(G,\ab)$, establishing part (1) of Lemma \ref{l: technical_lemma}. Then Lemma \ref{l: condition_2_for_hyperbolic} guarantees that condition (2) of Lemma \ref{l: technical_lemma} holds as well, picking $h_g$ to be trivial. Hence the theorem follows.  
\end{proof}


		
\begin{remark}
 Notice that to prove Theorem \ref{thm:hyp} we have applied the general technical Lemma \ref{l: technical_lemma}. In the case of hyperbolic groups the conjugating elements $h_g$ from Lemma \ref{l: technical_lemma} (\ref{def_set}) do not occur, while they were ubiquitous when proving our analogous results for RAAGs. The role of $h_g$ is different for the two classes of groups due to the structure of the centralisers: in a RAAG any centraliser is the conjugate of the centraliser of the corresponding cyclically reduced element (see Theorem \ref{t: centralizers_RAAGS}), while in hyperbolic groups the centralisers have a much nicer form, as described in Lemma \ref{l: centralizers_in_hyp_groups}.
\end{remark}

The results of this section can be generalised to relatively hyperbolic groups, in particular to any group $G$ that is torsion-free and hyperbolic with respect to virtually abelian (parabolic) subgroups. To start with, work of Dahmani \cite{DahmaniIJM09} shows that such groups have decidable Diophantine Problem, so it is interesting to ask whether any of the extensions of $\mc{DP}(G)$ are decidable or not. One can follow the strategy in this section to show that $\mc{DP}(G, \ab)$ is undecidable if $b_1(G)\geq 2$. First of all, one can find a generating set consisting entirely of hyperbolic elements (that is, elements that are not conjugate to any of the parabolic subgroup elements) by following the same arguments as in the proof of Lemma \ref{lem:basis_hyp}. Since hyperbolic elements have virtually cyclic centralisers \cite{Osin}, Lemmas \ref{l: PE-interpret_powers} through \ref{l: condition_2_for_hyperbolic} apply; one can add a pair of \ap \ elements that are hyperbolic to the basis, as in the proof of Theorem \ref{thm:hyp}, to get that $\mc{DP}(G, \ab)$ is undecidable.
This will apply, in particular, to limit groups, which are torsion-free and relatively hyperbolic with respect to free abelian subgroups.

\section{Conclusions and open questions}\label{sec:conclusions}

Motivated by the the vast literature on word equations with length constraints in the context of free semigroups, in this paper we started a systematic study of the Diophantine Problem with length constraints in groups, focusing on the constraints that require the solutions to satisfy a system of equations in the abelianisation of the group; this is akin to imposing Parikh - type constraints or adding abelian predicates to the existential theory of semigroups. 

There are clear parallels between the decidability results for semigroups versus groups. For example, the proofs that the Diophantine Problem with abelianisation constraints ($\mc{DP}, \ab$) is undecidable follow a similar pattern for free semigroups and groups. In both cases we follow the strategy of B\"uchi and Senger \cite{RichardBuchi1988} to encode the ring of integers into ($\mc{DP}, \ab$), phrasing this into the more elegant, abstract and general language of interpretability, which then allows us to deal with classes of groups beyond the free ones. We hope the framework of interpretability and further tools from model theory can lead to further applications and answer some of the questions below. 

\medskip
\noindent {\bf Problem 1.} Is ($\mc{DP}, \ab$) decidable for partially commutative semigroups?

While using interpretability via equations to encode the ring of integers into the algebraic structures we consider, it became clear that the algebraic properties of semigroups often require a different approach compared to groups. A prime example is that our strategy for RAAGs was not immediately applicable to partially commutative semigroups (also called `trace monoids'). The main reason for this is the reliance of our arguments on the normality of certain subgroups, a concept not available in a monoid. 
\medskip

\noindent {\bf Problem 2.} Is ($\mc{DP}, \ab$) decidable for graph products of monoids or groups?
 
  Trace monoids and RAAGs are examples of graph products of monoids and groups, respectively. We expect that at least in the group case our `General technical lemma' (Lemma \ref{l: technical_lemma}) applies.

\medskip
\noindent {\bf Problem 3.} Is ($\mc{DP}, \ab$) decidable for hyperbolic groups with $b_1=1$?

We showed in this paper that if the abelianisation is finite, then ($\mc{DP}, \ab$) is decidable, while the opposite holds if $b_1>1$, so the rank $1$ case lies between decidability and undecidability. 

\medskip
\noindent {\bf Problem 4.} There are other intriguing groups where the abelianisation has torsion-free rank $=1$, such as the Baumslag--Solitar groups $BS(1,n)$, or the braid group on three strands $\langle a, b \mid aba=bab\rangle$ (or more generally, odd dihedral Artin groups). In all these cases $\mc{DP}$ is known to be decidable, but ($\mc{DP}, \ab$) remains open.

\section*{Acknowledgements}
\noindent The first named author was partially supported by EPSRC Standard Grant EP/R035814/1.

The second named author was supported by the ERC grant PCG-336983, by the Basque Government  grant IT974-16,  by the Ministry of Economy, Industry and Competitiveness of the Spanish Government Grant MTM2017-86802-P, and by the  Basque Government
through the BERC 2018-2021 program and by the Ministry of
Science, Innovation and Universities: BCAM Severo Ochoa
accreditation SEV-2017-0718.

 
\bibliographystyle{plainurl}
\bibliography{references}

\def\cprime{$'$} \def\cprime{$'$}
\begin{thebibliography}{10}

\bibitem{Abdulla}
Parosh~Aziz Abdulla, Mohamed~Faouzi Atig, Yu-Fang Chen, Luk{\'a}{\v{s}}
  Hol{\'i}k, Ahmed Rezine, Philipp R{\"u}mmer, and Jari Stenman.
\newblock String constraints for verification.
\newblock In Armin Biere and Roderick Bloem, editors, {\em Computer Aided
  Verification}, pages 150--166, Cham, 2014. Springer International Publishing.

\bibitem{MSRInotes}
J.~M. Alonso, T.~Brady, D.~Cooper, V.~Ferlini, M.~Lustig, M.~Mihalik,
  M.~Shapiro, and H.~Short.
\newblock Notes on word hyperbolic groups.
\newblock In {\em Group theory from a geometrical viewpoint ({T}rieste, 1990)},
  pages 3--63. World Sci. Publ., River Edge, NJ, 1991.

\bibitem{amadini}
Roberto Amadini.
\newblock A survey on string constraint solving.
\newblock {\em ACM Comput. Surv.}, 55(1), nov 2021.
\newblock \href {https://doi.org/10.1145/3484198} {\path{doi:10.1145/3484198}}.

\bibitem{AM2007}
G~Arzhantseva and A~Minasyan.
\newblock Relatively hyperbolic groups are $c^*$-simple.
\newblock {\em Journal of Functional Analysis}, 243:345--351, 2007.

\bibitem{BridsonHaefliger}
M.R. Bridson and A.~H{\"a}fliger.
\newblock {\em Metric Spaces of Non-Positive Curvature}.
\newblock Grundlehren der mathematischen Wissenschaften. Springer Berlin
  Heidelberg, 2011.
\newblock URL: \url{https://books.google.co.uk/books?id=3DjaqB08AwAC}.

\bibitem{RichardBuchi1988}
J.~Richard B\"uchi and Steven Senger.
\newblock Definability in the existential theory of concatenation and
  undecidable extensions of this theory.
\newblock {\em Zeitschrift fur mathematische Logik und Grundlagen der
  Mathematik}, 34(4):337--342, 1988.
\newblock \href {https://doi.org/10.1002/malq.19880340410}
  {\path{doi:10.1002/malq.19880340410}}.

\bibitem{raags_intro}
Ruth Charney.
\newblock An introduction to right-angled artin groups.
\newblock {\em Geometriae Dedicata}, 125, 11 2006.
\newblock \href {https://doi.org/10.1007/s10711-007-9148-6}
  {\path{doi:10.1007/s10711-007-9148-6}}.

\bibitem{eqns_hyp_grps}
L.~Ciobanu and M.~Elder.
\newblock The complexity of solution sets to equations in hyperbolic groups.
\newblock {\em Israel J. Math.}, 245(2):869--920, 2021.
\newblock URL:
  \url{https://doi-org.manchester.idm.oclc.org/10.1007/s11856-021-2232-z},
  \href {https://doi.org/10.1007/s11856-021-2232-z}
  {\path{doi:10.1007/s11856-021-2232-z}}.

\bibitem{DahmaniIJM09}
Fran\c{c}ois Dahmani.
\newblock Existential questions in (relatively) hyperbolic groups.
\newblock {\em Israel J. Math.}, 173:91--124, 2009.
\newblock \href {https://doi.org/10.1007/s11856-009-0084-z}
  {\path{doi:10.1007/s11856-009-0084-z}}.

\bibitem{DG}
Fran\c{c}ois Dahmani and Vincent Guirardel.
\newblock Foliations for solving equations in groups: free, virtually free, and
  hyperbolic groups.
\newblock {\em J. Topol.}, 3(2):343--404, 2010.
\newblock \href {https://doi.org/10.1112/jtopol/jtq010}
  {\path{doi:10.1112/jtopol/jtq010}}.

\bibitem{DayManeaWE}
Joel~D Day, Vijay Ganesh, Paul He, Florin Manea, and Dirk Nowotka.
\newblock The satisfiability of word equations: Decidable and undecidable
  theories.
\newblock In {\em International Conference on Reachability Problems}, pages
  15--29. Springer, 2018.

\bibitem{dgh01}
Volker Diekert, Claudio Guti{\'e}rrez, and Christian Hagenah.
\newblock The existential theory of equations with rational constraints in free
  groups is {PSPACE}-complete.
\newblock In A.~Ferreira and H.~Reichel, editors, {\em Proc. 18th Annual
  Symposium on Theoretical Aspects of Computer Science (STACS'01), Dresden
  (Germany), 2001}, volume 2010 of {\em Lecture Notes in Computer Science},
  pages 170--182. Springer-Verlag, 2001.
\newblock \href {https://doi.org/10.1007/3-540-44693-1_15}
  {\path{doi:10.1007/3-540-44693-1_15}}.

\bibitem{DLijac}
Volker Diekert and Markus Lohrey.
\newblock Word equations over graph products.
\newblock {\em Internat. J. Algebra Comput.}, 18(3):493--533, 2008.
\newblock URL: \url{https://doi.org/10.1142/S0218196708004548}, \href
  {https://doi.org/10.1142/S0218196720500198}
  {\path{doi:10.1142/S0218196720500198}}.

\bibitem{ganesh}
Vijay Ganesh, Mia Minnes, Armando Solar-Lezama, and Martin Rinard.
\newblock Word equations with length constraints: What’s decidable?
\newblock volume 7857, pages 209--226, 11 2012.
\newblock \href {https://doi.org/10.1007/978-3-642-39611-3_21}
  {\path{doi:10.1007/978-3-642-39611-3_21}}.

\bibitem{GarretaGray}
Albert Garreta and Robert Gray.
\newblock On equations and first-order theory of one-relator monoids.
\newblock {\em Information and Computation}, 281, December 2021.
\newblock \href {https://doi.org/10.1016/j.ic.2021.104745}
  {\path{doi:10.1016/j.ic.2021.104745}}.

\bibitem{GMO}
Albert Garreta, Alexei Miasnikov, and Denis Ovchinnikov.
\newblock Diophantine problems in solvable groups.
\newblock {\em Bulletin of Mathematical Sciences}, 10(01):2050005, 2020.
\newblock \href
  {http://arxiv.org/abs/https://doi.org/10.1142/S1664360720500058}
  {\path{arXiv:https://doi.org/10.1142/S1664360720500058}}, \href
  {https://doi.org/10.1142/S1664360720500058}
  {\path{doi:10.1142/S1664360720500058}}.

\bibitem{herbst_thomas}
Thomas Herbst and Richard~M. Thomas.
\newblock Group presentations, formal languages and characterizations of
  one-counter groups.
\newblock {\em Theoret. Comput. Sci.}, 112(2):187--213, 1993.
\newblock \href {https://doi.org/10.1016/0304-3975(93)90018-O}
  {\path{doi:10.1016/0304-3975(93)90018-O}}.

\bibitem{Hodges}
W.~Hodges.
\newblock {\em Model theory}, volume~42 of {\em Encyclopedia of Mathematics and
  its Applications}.
\newblock Cambridge University Press, Cambridge, 1993.
\newblock URL: \url{http://dx.doi.org/10.1017/CBO9780511551574}, \href
  {https://doi.org/10.1017/CBO9780511551574}
  {\path{doi:10.1017/CBO9780511551574}}.

\bibitem{majumdar2021quadratic}
Rupak Majumdar and Anthony~W Lin.
\newblock Quadratic word equations with length constraints, counter systems,
  and presburger arithmetic with divisibility.
\newblock {\em Logical Methods in Computer Science}, 17, 2021.

\bibitem{Osin}
Denis Osin.
\newblock Elementary subgroups of relatively hyperbolic groups and bounded
  generation.
\newblock {\em Internal. J. Algebra Comput.}, 16:99--118, 2006.

\bibitem{RS95}
E.~Rips and Z.~Sela.
\newblock Canonical representatives and equations in hyperbolic groups.
\newblock {\em Invent. Math.}, 120(3):489--512, 1995.
\newblock \href {https://doi.org/10.1007/BF01241140}
  {\path{doi:10.1007/BF01241140}}.

\bibitem{Segal_book}
Daniel Segal.
\newblock {\em Polycyclic Groups}.
\newblock Cambridge Tracts in Mathematics. Cambridge University Press, 1983.
\newblock \href {https://doi.org/10.1017/CBO9780511565953}
  {\path{doi:10.1017/CBO9780511565953}}.

\bibitem{Servatius1989AutomorphismsOG}
Herman Servatius.
\newblock Automorphisms of graph groups.
\newblock {\em Journal of Algebra}, 126:34--60, 1989.

\end{thebibliography}

\end{document}